\newtheorem{theorem}{Theorem}
\newtheorem{definition}{Definition}
\newtheorem{criterion}{Criterion}
\newtheorem{proposition}{Proposition}
\theoremstyle{definition}
\newtheorem{remark}{Remark}
\theoremstyle{plane}
\def \beq{ \begin{equation} }
\def \eeq{\end{equation}}
\title{Rotopulsators of the curved $N$-body problem}
\begin{document}
\maketitle
\markboth{Florin Diacu and Shima Kordlou}{Rotopulsators of the curved $N$-body problem}
\author{\begin{center}
{\bf Florin Diacu}$^{1,2}$ and {\bf Shima Kordlou}$^2$\\
\smallskip
{\footnotesize $^1$Pacific Institute for the Mathematical Sciences\\
and\\
$^2$Department of Mathematics and Statistics\\
University of Victoria\\
P.O.~Box 3060 STN CSC\\
Victoria, BC, Canada, V8W 3R4\\
diacu@uvic.ca and shimak@uvic.ca\\
}\end{center}

}

\vskip0.5cm

\begin{center}
\today
\end{center}

\vspace{2cm}

\begin{abstract}
We consider the $N$-body problem in spaces of constant curvature and study its rotopulsators, i.e.\ solutions for which the configuration of the bodies rotates and changes size during the motion. Rotopulsators fall naturally into five groups: positive elliptic, positive elliptic-elliptic, negative elliptic, negative hyperbolic, and negative elliptic-hyperbolic, depending on the nature and number of their rotations and  on whether they occur in spaces of positive or negative curvature. After obtaining existence criteria for each type of rotopulsator, we derive their conservation laws. We further deal with the existence and uniqueness of some classes of rotopulsators in the 2- and 3-body case and prove two general results about the qualitative behaviour of rotopulsators. More precisely, for positive curvature we show that there is no foliation of the 3-sphere with Clifford tori such that the motion of each body is confined to some Clifford torus. For negative curvature, a similar result is proved relative to foliations of the hyperbolic 3-sphere with hyperbolic cylinders. 
\end{abstract}

\newpage

\tableofcontents

 
\section{Introduction}

The goal of this paper is to study rotopulsators of the curved $N$-body problem, a type of solution that extends the concept of homographic orbit from the Euclidean space to spaces of constant curvature. In curved spaces, similar geometric figures are also congruent, so the word homographic is not suited for describing orbits that rotate and expand or contract during the motion. We will therefore introduce the concept of rotopulsator, which overlaps with the Euclidean notion of homographic solution of the $N$-body problem when the curvature of the space becomes zero. But before describing the results we prove in this paper, let us give the overall motivation of this research and provide a brief history of the $N$-body problem in spaces of constant curvature.


\section{Motivation}

The curved $N$-body problem offers an opportunity to look into the nature of the physical space. How do we measure the shortest distance between two points: along a straight line, an arc of a great circle, or a geodesic of some other manifold? Apparently, Gauss tried to answer this question by measuring the angles of triangles formed by three mountain peaks, to decide whether their sum was smaller or larger than $\pi$ radians. But his experiments failed because no deviation from $\pi$ could be detected outside the unavoidable measurement errors. It thus became clear that if the physical space is not flat, the deviation from zero curvature is very small. Recent cosmological experiments involving the background radiation made physicists believe that space is Euclidean, although they have no proof so far, their results being as inconclusive as those of Gauss. In fact, from the mathematical point of view, zero curvature is highly unlikely, if compared to a continuum of possibilities for positive or negative curvature.

The study of the curved $N$-body problem offers new insight since we can observe celestial motions from Earth. If we prove the existence of orbits that characterize only one of the positive, zero, or negative constant curvature spaces, i.e.\ such orbits don't occur in any of the two other possible spaces, and are stable too, then we can hope to decide the shape of physical space by astronomical observations.

This dynamical approach towards understanding the geometry of the universe may succeed at the local, but not the global, level because celestial motions at large scales are mainly radial: galaxies and clusters of galaxies just move away from each other. Interesting celestial orbits occur only in solar systems. But the study of the curved $N$-body problem offers a new mathematical playfield that can shed some light on the Euclidian case through the study of the bifurcations that occur when the curvature tends to zero, and may lead to a better understanding of several mathematical questions, including those related to the singularities that occur in the motion of particle systems, \cite{Diacu8}, \cite{Diacu1}.


\section{A brief history of the curved $N$-body problem}

In the 1830s, J\'anos Bolyai and Nikolai Lobachevsky independently proposed a 2-body problem in the hyperbolic space $\mathbb H^3$, \cite{Bolyai}, \cite{Lobachevsky}. They suggested the use of a force that is inversely proportional with the area of a sphere of radius $r$, where $r$ is the distance between the bodies. Their line of thought followed that of Gauss, who had viewed gravitation as inversely proportional to the area of an Euclidean sphere
in Euclidean space. But neither Bolyai nor Lobachevsky came up with an analytic expression for this new force. In 1870, Ernest Schering pointed out that, in $\mathbb H^3$, the area of a sphere of radius $r$ is proportional to $\sinh^2r$, so he defined a potential that involves $\coth r$, \cite{Schering}. Wilhelm Killing naturaly extended this idea to the sphere $\mathbb S^3$ with the help of a potential proportional to $\cot r$, \cite{Killing}.

During the first couple of years of the 20th century, Heinrich Liebmann studied the Kepler problem (the motion of a body around a fixed centre) for the cotangent potential and recovered Kepler's laws in slightly modified form, \cite{Liebmann1}. Moreover, he found that a property earlier proved by Joseph Bertrand for the Newtonian potential, namely that all bounded orbits of the Kepler problem are closed, \cite{Bertrand}, was also true for the cotangent potential, \cite{Liebmann2}. These results, together with the fact that both the Newtonian and the cotangent potential of the Kepler problem are harmonic functions in the 3-dimensional case, established the cotangent potential among the problems worth researching in celestial mechanics. More recently, results in this direction were obtained by several Spanish and Russian mathematicians, \cite{Carinena}, \cite{Kozlov}, \cite{Shchepetilov}. It is interesting to note that, unlike in Euclidean space, the curved Kepler problem and the curved 2-body problem are not equivalent. The former is integrable, but the latter is not, \cite{Shchepetilov}, so its study appears to be more complicated than that of the classical case.

Recently, we found a new setting that allowed the generalization of the 2-body case to any number $N\ge 2$ of bodies. We showed in several papers, \cite{Diacu7}, \cite{Diacu3}, \cite{Diacu4}, that the equations of motion can be simultaneously written for positive and negative curvature. The idea was to use the hyperbolic sphere, i.e.\ Weierstrass's model of hyperbolic geometry, given by the upper sheet of the hyperboloid of two sheets embedded in the Minkowski space. By suitable coordinate and time transformations, this study can be reduced to $\mathbb S^3$ and $\mathbb H^3$, or to $\mathbb S^2$ and $\mathbb H^2$ in the 2-dimensional case.
For the latter, the equations in intrinsic coordinates were also obtained, \cite{Perez}, \cite{Diacu6}. So far, we studied the singularities of the equations and of the solutions, the various classes of relative equilibria (i.e.\ orbits whose mutual distances are constant in time), and some rotopulsating orbits in the 2-dimensional case, \cite{Diacu1}, \cite{Diacu2}, \cite{Diacu3}, \cite{Diacu4}, \cite{Diacu5}, \cite{Diacu6}, \cite{Diacu7}, \cite{Diacu8}. This paper provides a first investigation of the rotopulsating orbits in $\mathbb S^3$ and $\mathbb H^3$.

\section{Summary of results}

The remaining part of this paper is structured as follows. We first introduce in Section 5 the equations of motion in $\mathbb S^3$ and $\mathbb H^3$ and obtain their seven integrals of motion: one for the energy and six for the total angular momentum. In Section 6, we define the concept of rotopulsating orbit (or rotopulsator) and classify these solutions into five types, which we call positive elliptic, positive elliptic-elliptic, negative elliptic, negative hyperbolic, and negative elliptic-hyperbolic, depending on the manifold in which they move and on the nature and number of their rotations, which are determined by Lie groups of isometric transformations in $\mathbb S^3$ and $\mathbb H^3$.

The main results are stated and proved in Sections 7 through 13. Sections 7, 8, 10, 11, and 12 have the same structure, each providing a criterion for the existence of the type of rotopulsator it analyzes, giving the corresponding conservation laws, and proving existence and uniqueness results for classes of rotopulsators or relative equilibria for systems of two or three bodies. Thus Section 7 deals with positive elliptic, Section 8 with positive elliptic-elliptic, Section 10 with negative elliptic, Section 11 with negative hyperbolic, and  Section 12 with negative elliptic-hyperbolic rotopulsators and/or relative equilibria. The examples provided in each of these sections are of Lagrangian type (i.e.\ the bodes lie at the vertices of rotating equilateral triangles) for the positive elliptic, positive elliptic-elliptic, and negative elliptic rotopulsators, and of Eulerian type (i.e.\ the bodies lie on the same rotating geodesic) for the negative hyperbolic and negative elliptic-hyperbolic rotopulsators.

Sections 10 and 13 offer a theorem each, whose object is to describe the qualitative behaviour of some classes of rotopulsators in $\mathbb S^3$ and $\mathbb H^3$, respectively. The first theorem shows that, for rotopulsators of $\mathbb S^3$, for any foliation of the sphere with Clifford tori, none of the bodies can stay confined to some Clifford torus, so at least one body passes through a continuum of such surfaces. The second theorem proves a similar result for rotopulsators in $\mathbb H^3$, where the Clifford tori are replaced by hyperbolic cylinders.

An interesting finding is that of a class of Lagrangian relative equilibria (see Subsection \ref{pee-Lagr}) that cannot be generated from any single element of the underlying subgroup $SO(2)\times SO(2)$ of the Lie group $SO(4)$. (From the geometric-dynamical point of view this is very much like viewing the uniform motion of a point around a circle in its projection on some inclined plane. In projection, the motion appears elliptic and nonuniform.)   
Nevertheless, using suitable coordinate transformations, i.e.\ rotations of the frame, it is possible to find coordinates in which the solution can be generated by an element of the underlying torus $SO(2)\times SO(2)$. But the qualitative behaviour of the system can also be studied, without loss of information, in the original coordinates.


\section{Equations of motion}

Consider $N$ bodies (point masses, point particles) of masses $m_1,\dots, m_N>0$ moving in 
$\mathbb{S}^3$ (thought as embedded in the Euclidean space $\mathbb R^4$) or $\mathbb H^3$ (embedded in the Minkowski space $\mathbb R^{3,1}$),
where
\begin{equation*}
\begin{split}
{\mathbb S}^3=\{(w,x,y,z)\ | \ w^2+x^2+y^2+z^2=1\},\hspace{0.9cm}\\
{\mathbb H}^3=\{(w,x,y,z)\ | \ w^2+x^2+y^2-z^2=-1, \ z>0\}.
\end{split}
\end{equation*}
In previous work, we derived the equations of motion of the curved $N$-body problem using constrained Lagrangian dynamics and showed that, by suitable coordinate and time-rescaling transformations, the study of the problem can be reduced to $\mathbb S^3$, for positive curvature, and to $\mathbb H^3$, for negative curvature, as long as we deal only with qualitative properties, \cite{Diacu3}, \cite{Diacu4}.

The configuration of the system is described by the $4N$-dimensional vector
$$
{\bf q}=({\bf q}_1,\dots,{\bf q}_N),
$$
where ${\bf q}_i=(w_i,x_i,y_i,z_i), i=1,\dots, N$, denote the position vectors of the bodies.  The equations of motion are given by the second-order system
\begin{equation}
\label{both}
\ddot{\bf q}_i=\sum_{j=1,j\ne i}^N\frac{m_j[{\bf q}_j-\sigma({\bf q}_i\cdot {\bf q}_j){\bf q}_i]}{[\sigma-\sigma({\bf q}_i\cdot {\bf q}_j)^2]^{3/2}}-\sigma(\dot{\bf q}_i\cdot \dot{\bf q}_i){\bf q}_i, \ \ i=1,\dots, N,
\end{equation}
with initial-condition constraints
\begin{equation}
{\bf q}_i(0)\cdot{\bf q_i}(0)=\sigma, \ \ {\bf q}_i(0)\cdot\dot{\bf q}_i(0)=0, \ \ i=1,\dots, N.
\end{equation}
Here $\cdot$ is the standard inner product of signature $(+,+,+,+)$ in $\mathbb S^3$, but the Lorentz inner product of signature $(+,+,+,-)$ in $\mathbb H^3$, and
$$
\sigma=
\begin{cases}
+1,\ \ {\rm in}\ \ {\mathbb S^3},\cr
-1,\ \ {\rm in}\ \ {\mathbb H^3},
\end{cases}
$$
denotes the signum function. Since the equations of motion and the constraints on the initial conditions imply that 
$$
{\bf q}_i\cdot{\bf q_i}=\sigma, \ \ {\bf q}_i\cdot\dot{\bf q}_i=0, \ \ i=1,\dots, N,
$$
for all time, equations \eqref{both} can be viewed as a $6N$-dimensional first-order system of ordinary differential equations. The gravitational force acting on each body has an attractive component (the above sum) and a term (involving the velocities) that corresponds to the constraints.

As a consequence of Noether's theorem, system \eqref{both} has the scalar integral of energy,
$$
T({\bf q},\dot{\bf q})-U({\bf q})=h,
$$
where 
$$U({\bf q})=\sum_{1\le i<j\le N}\frac{\sigma m_im_j{\bf q}_i\cdot{\bf q}_j}{[\sigma-\sigma({\bf q}_i\cdot{\bf q}_j)^2]^{3/2}}$$ 
is the force function ($-U$ representing the potential), which stems from the cotangent of the distance, and
$$
T({\bf q},\dot{\bf q})=\frac{1}{2}\sum_{i=1}^Nm_i(\dot{\bf q}_i\cdot\dot{\bf q}_i)
(\sigma{\bf q}_i\cdot{\bf q}_i)
$$
is the kinetic energy, with $h$ representing an integration constant. System \eqref{both} also has the 6-dimensional integral of the total angular momentum,
$$
\sum_{i=1}^Nm_i{\bf q}_i\wedge\dot{\bf q}_i={\bf c},
$$
where $\wedge$ is the wedge product and ${\bf c}=(c_{wx},c_{wy},c_{wz},c_{xy},c_{xz},c_{yz})$ denotes an integration vector, each component measuring the rotation of the system about the origin of the frame relative to the plane corresponding to the bottom indices. On components, the 6 integrals are given by the equations
\begin{align*}\label{angularmomentum}
\sum_{i=1}^Nm_i(w_i\dot{x}_i-\dot{w}_ix_i)&=c_{wx}, & \sum_{i=1}^Nm_i(w_i\dot{y}_i-\dot{w}_iy_i)&=c_{wy},\\
\sum_{i=1}^Nm_i(w_i\dot{z}_i-\dot{w}_iz_i)&=c_{wz},& \sum_{i=1}^Nm_i(x_i\dot{y}_i-\dot{x}_iy_i)&=c_{xy},\\
\sum_{i=1}^Nm_i(x_i\dot{z}_i-\dot{x}_iz_i)&=c_{xz},&  \sum_{i=1}^Nm_i(y_i\dot{z}_i-\dot{y}_iz_i)&=c_{yz}.
\end{align*}

Using the notations
$$
q_{ij}:={\bf q}_i\cdot{\bf q}_j\ \ {\rm and} \ \ \dot q_{ii}:=\dot{\bf q}_i\cdot\dot{\bf q}_i,
$$
we can explicitly write the equations of motion in $\mathbb S^3$ as
\begin{equation}
\label{positive}
\ddot{\bf q}_i=\sum_{\stackrel{j=1}{j\ne i}}^N\frac{m_j({\bf q}_j-q_{ij}{\bf q}_i)}{(1-q_{ij}^2)^{3/2}}-\dot q_{ii}{\bf q}_i,\ \
q_{ii}=1, \ \ i=1,2,\dots,N,
\end{equation}
and in $\mathbb H^3$ as
\begin{equation}
\label{negative}
\ddot{\bf q}_i=\sum_{\stackrel{j=1}{j\ne i}}^N\frac{m_j({\bf q}_j+q_{ij}{\bf q}_i)}{(q_{ij}^2-1)^{3/2}}+\dot q_{ii}{\bf q}_i,\ \
q_{ii}=-1, \ \ i=1,2,\dots,N.
\end{equation}
It is important to recall that the inner product that occurs in the expressions of $q_{ij}$ and $\dot q_{ii}$ is not the same in the above two systems. From now on we will refer to equations \eqref{positive} when we study motions in $\mathbb S^3$ and to equations \eqref{negative} when dealing with the dynamics of the bodies in $\mathbb H^3$.


\section{Basic definitions}

In this section we define several types of rotopulsating orbits of the curved $N$-body problem, a classification that follows naturally from the isometry groups of $\mathbb S^3$ and $\mathbb H^3$. The rotopulsating orbit extends the concept of Euclidean homographic solution to spaces of nonzero constant curvature. In two previous papers we introduced this concept in the 2-dimensional case and kept using the name ``homographic'' for it, \cite{Diacu2}, \cite{Diacu5}. Our idea was that the configurations we studied (mostly polygons) remained homographic if viewed in the ambient Euclidean space. But it seems more natural to regard configurations in intrinsic terms, the more so when we move from two to three dimensions. 

In $\mathbb S^2, \mathbb S^3, \mathbb H^2$, and $\mathbb H^3$, however, the concept of similarity, which corresponds to the adjective homographic, makes little sense, since, for instance, the only similar triangles are the congruent ones. So to extend the concept of homographic orbit to spaces of constant curvature, the terminology needs, on one hand, to capture somehow the expansion/contraction aspect as well as the rotational component of the motion, and, on the other hand, to agree with the properties described by the original definition when the curvature tends to zero. We therefore introduce here a new adjective, {\it rotopulsating}, which preserves the features of the orbit without implying similarity of the configuration. For simplicity, rotopulsating orbits will also be called {\it rotopulsators}.

The definitions we provide below follow naturally from the concept of relative equilibrium of the curved $N$-body problem, defined in \cite{Diacu3}, \cite{Diacu4}. We introduced there various kinds of relative equilibria in terms of the isometric rotation groups of $\mathbb S^3$ and $\mathbb H^3$. The rotopulsators differ from relative equilibria by having nonuniform rotations and nonconstant mutual distances, as we will further see. To reconcile the two concepts, we also offer a new definition for relative equilibria.

\begin{definition}[{\bf Positive elliptic rotopulsators and relative equilibria}]
\label{def-positive-elliptic}
A solution of system \eqref{positive} in $\mathbb S^3$ is called a positive elliptic rotopulsator if it is of the form
\begin{equation}\label{positive-elliptic}
\begin{split}
{\bf q}=({\bf q}_1, {\bf q}_2,\dots,{\bf q}_N), \  \ {\bf q}_i=
(w_i,x_i,y_i,z_i),\ \ i=1,2,\dots, N,\hspace{1.3cm}\\
w_i=r_i(t)\cos[\alpha(t)+a_i],\ \ x_i=r_i(t)\sin[\alpha(t)+a_i],\ \ y_i=y_i(t),\ \ z_i=z_i(t),
\end{split}
\end{equation}
where $a_i,\ i=1,2,\dots, N$, are constants, $\alpha$ is a nonconstant function, $r_i, y_i$, and $z_i$ satisfy the conditions  
\begin{equation}\label{conditions-pe}
0\le r_i\le 1, \ \ \ -1\le y_i, z_i\le 1,\ \ \ {\rm and}\ \ \
r_i^2+y_i^2+z_i^2=1,\ \ \ i=1,2,\dots, N,
\end{equation}
and there are at least two indices $i,j\in\{1,2,\dots,N\}, i\ne j$, such that $q_{ij}$ is not constant.  If the quantities $q_{ij}$ are constant for all $i,j\in\{1,2,\dots,N\}, i\ne j$, then the solution is called a positive elliptic relative equilibrium.
\end{definition}

\begin{remark}
The condition that $\alpha$ is nonconstant is imposed to ensure  that the system has an elliptic rotation relative to the $wx$-plane; the fact that it has no rotation relative to the $yz$-plane follows from \eqref{conditions-pe} and the corresponding integral of the angular momentum (see also Remark \ref{def-nonstandard} below). Rotations relative to other base planes may occur.
\end{remark}

\begin{definition}[{\bf Positive elliptic-elliptic rotopulsators and relative equilibria}]
\label{def-positive-elliptic-elliptic}
A solution of system \eqref{positive} in $\mathbb S^3$  is called a  positive elliptic-elliptic rotopulsator if it is of the form
\begin{equation}\label{positive-elliptic-elliptic}
\begin{split}
{\bf q}=({\bf q}_1, {\bf q}_2,\dots,{\bf q}_N), \ {\bf q}_i=
(w_i,x_i,y_i,z_i),\ i=1,2,\dots, N,\hspace{-0.7cm}\\
w_i=r_i(t)\cos[\alpha(t)+a_i],\ x_i=r_i(t)\sin[\alpha(t)+a_i],\\
y_i=\rho_i(t)\cos[\beta(t)+b_i],\ z_i=\rho_i(t)\sin[\beta(t)+b_i],
\end{split}
\end{equation}
where $a_i, b_i,\ i=1,2,\dots, N$, are constants, $\alpha$ and $\beta$ are nonconstant functions, and $r_i$, $\rho_i$ satisfy the conditions  
$$
0\le r_i,\rho_i\le 1\ \ {\rm and}\ \
r_i^2+\rho_i^2=1,\ i=1,2,\dots, N,
$$
and there are at least two indices $i,j\in\{1,2,\dots,N\}, i\ne j$, such that $q_{ij}$ is not constant.  If the quantities $q_{ij}$ are constant for all $i,j\in\{1,2,\dots,N\}, i\ne j$, then the solution is called a positive elliptic-elliptic relative equilibrium.
\end{definition}

\begin{remark}
The conditions that $\alpha$ and $\beta$ are nonconstant are imposed to ensure that the system has two elliptic rotations, one relative to the $wx$-plane and the other relative to the $yz$-plane. Rotations relative to the other base planes may occur.
\end{remark}

\begin{definition}[{\bf Negative elliptic rotopulsators and relative equilibria}]
\label{def-negative-elliptic}
A solution of system \eqref{negative} in $\mathbb H^3$ is called a negative elliptic  rotopulsator if it is of the form
\begin{equation}
\label{negative-elliptic}
\begin{split}
{\bf q}=({\bf q}_1, {\bf q}_2,\dots,{\bf q}_N), \  \ {\bf q}_i=
(w_i,x_i,y_i,z_i),\ \ i=1,2,\dots, N,\hspace{1.3cm}\\
w_i=r_i(t)\cos[\alpha(t)+a_i],\ \ x_i=r_i(t)\sin[\alpha(t)+a_i],\ \ y_i=y_i(t),\ \ z_i=z_i(t),
\end{split}
\end{equation}
where $a_i,\ i=1,2,\dots, N$, are constants, $\alpha$ is a nonconstant function, $r_i, y_i$, and $z_i$ satisfy the conditions  
\begin{equation}\label{conditions-ne}
z_i\ge 1\ \ \ {\rm and}\ \ \
r_i^2+y_i^2-z_i^2=-1,\ \ \ i=1,2,\dots, N,
\end{equation}
and there are at least two indices $i,j\in\{1,2,\dots,N\}, i\ne j$, such that $q_{ij}$ is not constant.  If the quantities $q_{ij}$ are constant for all $i,j\in\{1,2,\dots,N\}, i\ne j$, then the solution is called a negative elliptic relative equilibrium.
\end{definition}

\begin{remark}
The condition that $\alpha$ is nonconstant is imposed to ensure that the system has an elliptic rotation relative to the $wx$-plane; the fact that it has no hyperbolic rotation relative to the $yz$-plane follows from \eqref{conditions-ne} and the corresponding integral of the angular momentum (see also Remark \ref{def-nonstandard} below). Rotations relative to other base planes may occur.
\end{remark}

\begin{definition}[{\bf Negative hyperbolic rotopulsators and relative equilibria}]
\label{def-negative-hyperbolic}
A solution of system \eqref{negative} in $\mathbb H^3$ is called a negative hyperbolic rotopulsator if it is of the form
\begin{equation}
\label{negative-hyperbolic}
\begin{split}
{\bf q}=({\bf q}_1, {\bf q}_2,\dots,{\bf q}_N), \  \ {\bf q}_i=
(w_i,x_i,y_i,z_i),\ \ i=1,2,\dots, N,\hspace{1.3cm}\\
w_i=w_i(t),\ x_i=x_i(t),\ y_i=\rho_i(t)\sinh[\beta(t)+b_i], \ z_i=\rho_i(t)\cosh[\beta(t)+b_i],
\end{split}
\end{equation}
where $b_i,\ i=1,2,\dots, N$, are constants, $\beta$ is a nonconstant function, $w_i, x_i, z_i$, and $\rho_i$ satisfy the conditions  
\begin{equation}\label{conditions-nh}
z_i\ge 1\ \ \ {\rm and}\ \ \
w_i^2+x_i^2-\rho_i^2=-1,\ \ \ i=1,2,\dots, N,
\end{equation}
and there are at least two indices $i,j\in\{1,2,\dots,N\}, i\ne j$, such that $q_{ij}$ is not constant.  If the quantities $q_{ij}$ are constant for all $i,j\in\{1,2,\dots,N\}, i\ne j$, then the solution is called a negative hyperbolic relative equilibrium.
\end{definition}

\begin{remark}
The condition that $\beta$ is nonconstant is imposed to ensure that the system has a hyperbolic rotation relative to the $yz$-plane; the fact that it has no elliptic rotation relative to the $wx$-plane follows from \eqref{conditions-nh} and the corresponding integral of the angular momentum (see also Remark \ref{def-nonstandard} below). Rotations relative to other base planes may occur.
\end{remark}

\begin{definition}[{\bf Negative elliptic-hyperbolic rotopulsators and relative equilibria}]
\label{def-negative-elliptic-hyperbolic}
A solution of system \eqref{negative} in $\mathbb H^3$ is called a negative elliptic-hyperbolic rotopulsator if it is of the form
\begin{equation}
\label{negative-elliptic-hyperbolic}
\begin{split}
{\bf q}=({\bf q}_1, {\bf q}_2,\dots,{\bf q}_N), \  \ {\bf q}_i=
(w_i,x_i,y_i,z_i),\ \ i=1,2,\dots, N,\hspace{0.2cm}\\
w_i=r_i(t)\cos[\alpha(t)+a_i],\ \ x_i=r_i(t)\sin[\alpha(t)+a_i],\hspace{1.1cm}\\\  
y_i=\rho_i(t)\sinh[\beta(t)+b_i], \ z_i=\rho_i(t)\cosh[\beta(t)+b_i],\hspace{0.85cm}\
\end{split}
\end{equation}
where $a_i, b_i,\ i=1,2,\dots, N$, are constants,  $\alpha$ and $\beta$ are nonconstant functions, $r_i,\eta_i$, $z_i$ satisfy the conditions  
$$
z_i\ge 1\ \ \ {\rm and}\ \ \
r_i^2-\rho_i^2=-1,\ \ \ i=1,2,\dots, N,
$$
and there are at least two indices $i,j\in\{1,2,\dots,N\}, i\ne j$, such that $q_{ij}$ is not constant.  If the quantities $q_{ij}$ are constant for all $i,j\in\{1,2,\dots,N\}, i\ne j$, then the solution is called a negative elliptic-hyperbolic relative equilibrium.
\end{definition}

\begin{remark}
The conditions that $\alpha$ and $\beta$ are not constant are imposed to ensure that the system has an elliptic rotation relative to the $wx$-plane and a hyperbolic rotation relative to the $yz$-plane. Rotations relative to other base planes may occur.
\end{remark}

\begin{remark}
Notice that we ignored a class of isometries in $\mathbb H^3$: the parabolic rotations, and did not provide a definition for solutions of this type. The reason for omitting this case is that, as proved in \cite{Diacu3} and \cite{Diacu4}, relative equilibria that stem from parabolic rotations do not exist, and it is easy to show using the same idea that rotopulsators of parabolic type do not exist either. 
\end{remark}

\begin{remark}\label{def-nonstandard}
There are alternative ways to define the above classes of rotopulsating solutions of the curved $N$-body problem. Indeed, positive elliptic rotopulsators could be considered as positive elliptic-elliptic rotopulsators with $\beta\equiv 0$.
Then
$$
y_i(t)=\rho_i(t)\cos b_i\ \ {\rm and}\ \ z_i(t)=\rho_i(t)\sin b_i, \ i=1,2,\dots,N,
$$
with $a_i, b_i$ constants, which makes sense, given that $y_i^2+z_i^2=\rho_i^2, \ i=1,2,\dots, N.$
Similarly, negative elliptic and negative hyperbolic rotopulsators could be defined as negative elliptic-hyperbolic rotopulsators with $\beta\equiv 0$ and $\alpha\equiv 0$, respectively. But our choice of five distinct definitions is more convenient for computations and will help us emphasize, unambiguously, certain properties specific to each of these solutions. Nevertheless, we will use this remark later in the proofs of Theorems 1 and 2.
\end{remark}

\begin{remark}
In \cite{Diacu5} we defined rotopulsators of the 3-body problem (called homographic orbits there for reasons we invoked earlier) in $\mathbb S^2$ and $\mathbb H^2$ in a narrower sense by asking that the Euclidean plane formed by the 3 bodies is all the time parallel with the $xy$-plane. In Definition \ref{def-positive-elliptic} reduced to $\mathbb S^2$, for instance, this implies that the condition for a relative equilibrium (which means that the mutual distances between the bodies remain constant during the motion) is equivalent to saying that $r_i$ is constant for all $i=1,2,\dots,N$. But, interesting enough, in Definition \ref{def-positive-elliptic-elliptic} the functions $r_i$ (and consequently $\rho_i$) may vary in spite of the fact that the quantities $q_{ij}$ stay constant, i.e.\ the mutual distances don't vary in time. The reason for this behaviour is that the corresponding relative equilibrium cannot be generated from a single element of the natural subgroup $SO(2)\times SO(2)$
of the Lie group $SO(4)$ that arises from the $wx$ and $yz$ coordinate pairs of the chosen reference frame. Nevertheless, a classical result, which claims that in a semisimple compact Lie group every element is contained in a maximal torus, \cite{Montaldi}, shows that a suitable change of coordinates leads to a reference frame in which all functions $r_i$ are constant. In particular, $SO(4)$ is a semisimple compact Lie group, so it satisfies the above result. Moreover, its maximal tori are the subgroups $SO(2)\times SO(2)$. But since it is impossible to know a priori which reference system to choose in order to make the functions $r_i$ constant, it is easier to define relative equilibria by asking that the functions $q_{ij}$, and not the functions $r_i$, are constant, as we did in all the above definitions. 
\end{remark}


\section{Positive elliptic rotopulsators}

In this section we analyze the solutions given in Definition \ref{def-positive-elliptic}. We first introduce a criterion for finding them, then obtain the conservation laws, and finally prove the existence of a particular class of orbits, namely the positive elliptic Lagrangian rotopulsators of the 3-body problem in $\mathbb S^3$.

\subsection{Criterion for positive elliptic rotopulsators or relative equilibria}

The following result provides necessary and sufficient conditions for the existence of positive elliptic rotopulsators or relative equilibria in $\mathbb S^3$.

\begin{criterion}
\label{pe-existence}
A solution candidate of the form \eqref{positive-elliptic} is a positive elliptic rotopulsator for system \eqref{positive} if and only if
\begin{equation}
\label{al-pe}
\dot\alpha=\frac{c}{\sum_{j=1}^Nm_jr_j^2}, 
\end{equation}
where $c\ne 0$ is a constant, there are at least two distinct indices $i,j\in\{1,2,\dots,N\}$ such that $q_{ij}$ is not constant, and the variables $y_i, z_i,r_i, \ i=1,2,\dots, N,$ satisfy the first-order system of\ \! $5N$ equations (with\ \! $N$ constraints: $r_i^2+y_i^2+z_i^2=1$, $i=1,2,\dots,N$),
\begin{equation}
\label{sys-crit-pe}
\begin{cases}
\dot y_i=u_i\cr
\dot z_i=v_i\cr
\dot{u}_i=\sum_{\stackrel{j=1}{j\ne i}}^N\frac{m_j(y_j-q_{ij}y_i)}{(1-q_{ij}^2)^{3/2}}-F_i({\bf y}, {\bf z}, u_i, v_i)y_i\cr
\dot{v}_i=\sum_{\stackrel{j=1}{j\ne i}}^N\frac{m_j(z_j-q_{ij}z_i)}{(1-q_{ij}^2)^{3/2}}-F_i({\bf y}, {\bf z}, u_i, v_i)z_i\cr
r_i\ddot\alpha+2\dot{r}_i\dot\alpha=\sum_{\stackrel{j=1}{j\ne i}}^N
\frac{m_jr_j\sin(a_j-a_i)}{(1-q_{ij}^2)^{3/2}},
\end{cases}
\end{equation}
where ${\bf y}=(y_1,y_2,\dots,y_N), {\bf z}=(z_1,z_2,\dots, z_N)$,
\begin{equation}
F_i({\bf y}, {\bf z}, u_i, v_i):=\frac{u_i^2+v_i^2-(y_iv_i-z_iu_i)^2}{1-y_i^2-z_i^2}+
\frac{c^2(1-y_i^2-z_i^2)}{\big[\sum_{j=1}^Nm_j(1-y_j^2-z_j^2)\big]^2},
\end{equation}
$i=1,2,\dots, N,$ and, for any $i,j\in\{1,2,\dots,N\}$,
$$
q_{ij}=r_ir_j\cos(a_i-a_j)+y_iy_j+z_iz_j.
$$
If the quantities $q_{ij}$ are constant for all distinct indices $i,j\in\{1,2,\dots,N\}$, then the solution is a relative equilibrium. If $q_{ij}=\pm 1$ for some distinct $i,j\in\{1,2,\dots,N\}$, then such solutions don't exist.
\end{criterion}
\begin{proof}
Consider a solution candidate of the form \eqref{positive-elliptic}. A straightforward computation shows that, for any $i,j\in\{1,2,\dots,N\}$, $q_{ij}$ is of the form given in the above statement. Moreover, for any $i=1,2,\dots, N$, we find that 
$$
\dot q_{ij}=\frac{\dot{y}_i^2+\dot{z}_i^2-(y_i\dot{z}_i-z_i\dot{y}_i)^2+(1-y_i^2-z_i^2)^2\dot\alpha^2}{1-y_i^2-z_i^2}.
$$
For all $\ i=1,2,\dots, N$, each $r_i$ can be expressed in terms of $y_i$ and $z_i$ to obtain
$$
r_i=(1-y_i^2-z_i^2)^{\frac{1}{2}},\ \ \
\dot{r}_i=-\frac{y_i\dot{y}_i+z_i\dot{z}_i}{(1-y_i^2-z_i^2)^{\frac{1}{2}}},
$$
$$
\ddot{r}_i=\frac{(y_i\dot{z}_i-z_i\dot{y}_i)^2-\dot{y}_i^2-\dot{z}_i^2-(1-y_i^2-z_i^2)(y_i\ddot{y}_i+z_i\ddot{z}_i)}{(1-y_i^2-z_i^2)^{\frac{3}{2}}}.
$$
Substituting a candidate solution of the form \eqref{positive-elliptic} into system \eqref{positive} and employing the above formulas, we obtain for the equations corresponding to $\ddot{y}_i$ and $\ddot{z}_i$ that
\begin{equation}
\label{yi}
\ddot{y}_i=\sum_{\stackrel{j=1}{j\ne i}}^N\frac{m_j(y_j-q_{ij}y_i)}{(1-q_{ij}^2)^{\frac{3}{2}}}-\frac{[\dot{y}_i^2+\dot{z}_i^2-(y_i\dot{z}_i-z_i\dot{y}_i)^2]y_i}{1-y_i^2-z_i^2}-(1-y_i^2-z_i^2)y_i\dot\alpha^2,
\end{equation}
\begin{equation}
\label{zi}
\ddot{z}_i=\sum_{\stackrel{j=1}{j\ne i}}^N\frac{m_j(z_j-q_{ij}z_i)}{(1-q_{ij}^2)^{\frac{3}{2}}}-\frac{[\dot{y}_i^2+\dot{z}_i^2-(y_i\dot{z}_i-z_i\dot{y}_i)^2]z_i}{1-y_i^2-z_i^2}-(1-y_i^2-z_i^2)z_i\dot\alpha^2.
\end{equation}
For the equations corresponding to $\ddot{w}_i$ and $\ddot{x}_i$, after some long computations that also use \eqref{yi} and \eqref{zi}, we are led to the equations
\begin{equation}
\label{alpha-pe}
r_i\ddot\alpha+2\dot{r}_i\dot\alpha=\sum_{\stackrel{j=1}{j\ne i}}^N
\frac{m_jr_j\sin(a_j-a_i)}{(1-q_{ij}^2)^{\frac{3}{2}}}, \ \ i=1,2,\dots, N.
\end{equation} 
We will further show that equations \eqref{yi}, \eqref{zi}, and \eqref{alpha-pe} lead to the system \eqref{sys-crit-pe}. For this purpose, we first compute $\dot\alpha$.

For every $i=1,2,\dots, N$, multiply the $i$th equation in \eqref{alpha-pe} by $m_ir_i$, add the resulting $N$ equations, and notice that
$$
\sum_{i=1}^N\sum_{\stackrel{j=1}{j\ne i}}^N\frac{m_im_jr_ir_j\sin(a_j-a_i)}{(1-q_{ij}^2)^{\frac{3}{2}}}=0.
$$
Thus we obtain the equation
$$
\ddot\alpha\sum_{i=1}^Nm_ir_i^2+
2\dot\alpha\sum_{i=1}^Nm_ir_i\dot{r}_i=0,
$$
which has the solution
$$
\dot\alpha=\frac{c}{\sum_{i=1}^Nm_ir_i^2}=\frac{c}{\sum_{i=1}^Nm_i(1-y_i^2-z_i^2)},
$$
where $c$ is an integration constant. Consequently,
equations \eqref{yi} and \eqref{zi} become
\begin{equation}
\label{yi-new}
\ddot{y}_i=\sum_{\stackrel{j=1}{j\ne i}}^N\frac{m_j(y_j-q_{ij}y_i)}{(1-q_{ij}^2)^{\frac{3}{2}}}-\frac{[\dot{y}_i^2+\dot{z}_i^2-(y_i\dot{z}_i-z_i\dot{y}_i)^2]y_i}{1-y_i^2-z_i^2}-
\frac{c^2(1-y_i^2-z_i^2)y_i}{\big[\sum_{j=1}^Nm_j(1-y_j^2-z_j^2)\big]^2},
\end{equation}
\begin{equation}
\label{zi-new}
\ddot{z}_i=\sum_{\stackrel{j=1}{j\ne i}}^N\frac{m_j(z_j-q_{ij}z_i)}{(1-q_{ij}^2)^{\frac{3}{2}}}-\frac{[\dot{y}_i^2+\dot{z}_i^2-(y_i\dot{z}_i-z_i\dot{y}_i)^2]z_i}{1-y_i^2-z_i^2}-\frac{c^2(1-y_i^2-z_i^2)z_i}{\big[\sum_{j=1}^Nm_j(1-y_j^2-z_j^2)\big]^2}, 
\end{equation}
$i=1,2,\dots, N$. These equations are equivalent to the first $4N$ equations that appear in \eqref{sys-crit-pe}. 

We still need to show that \eqref{alpha-pe} describes $N$ first-order equations in the unknown functions $r_1, r_2,
\dots, r_N$. A straightforward computation shows that they can be written as
\begin{equation}
\begin{cases}
e_1\dot r_1+b_{12}\dot r_2+b_{13}\dot r_3+\dots+b_{1N}\dot r_N=c_1\cr
b_{21}\dot r_1+e_2\dot r_2+b_{23}\dot r_3+\dots+b_{2N}\dot r_N=c_2\cr
\vdots\cr
b_{N1}\dot r_1+b_{N2}\dot r_2+b_{N3}\dot r_3+\dots+e_N\dot r_N=c_N,
\end{cases}
\end{equation}
where 
$$
e_i=1-m_ir_i^2, \ \ b_{ij}=-m_jr_jr_i,\ \ c_i=\frac{\sum_{j=1}^Nm_jr_j^2}{2c}\sum_{\stackrel{j=1}{j\ne i}}^N\frac{m_jr_j\sin(a_j-a_i)}{1-q_{ij}^2},
$$
$i=1,2,\dots,N$, which is a first-order subsystem of $N$ equations. (In general, this system can be simplified by solving the algebraic system in unknowns $\dot r_1,\dot r_2,\dots, \dot r_N$ leads.) 

The part of the criterion related to relative equilibria follows directly from Definition \ref{def-positive-elliptic}. 
The nonexistence of such solutions if some $q_{ij}=\pm 1$ follows from the fact that at least a denominator cancels in the equations of motion. This remark completes the proof. 
\end{proof}

\subsection{Conservation laws for positive elliptic rotopulsating orbits}

In addition to Criterion \ref{pe-existence}, we would also like to obtain the conservation laws specific to positive elliptic rotopulsators. They follow by straightforward computations using the above proof, the integral of energy, and the six integrals of the total angular momentum.

\begin{proposition}\label{integrals-pe} 
If system \eqref{positive} has a solution of the form \eqref{positive-elliptic}, then the following expressions are constant:

--- energy,
\begin{equation}
\label{energy-pe}
\begin{split}
h=\sum_{i=1}^N\frac{m_i[\dot{y}_i^2+\dot{z}_i^2-(y_i\dot{z}_i-z_i\dot{y}_i)^2]}{2(1-y_i^2-z_i^2)}\hspace{1cm}\\
+\frac{c^2}{2\sum_{i=1}^Nm_i(1-y_i^2-z_i^2)}
-\sum_{1\le i<j\le N}\frac{m_im_jq_{ij}}{(1-q_{ij}^2)^{\frac{1}{2}}};
\end{split}
\end{equation}

--- total angular momentum relative to the $wx$-plane,
\begin{equation}
\label{angmom-wx}
c_{wx}=c,
\end{equation}
where $c\ne 0$ is the constant in the expression \eqref{al-pe} of $\dot\alpha$;

--- total angular momentum relative to the $wy$-plane:
\begin{equation}
\label{angmom-wy}
\begin{split}
c_{wy}=\sum_{i=1}^Nm_i\bigg[(1-y_i^2-z_i^2)^{\frac{1}{2}}\dot{y}_i+\frac{(y_i\dot{y}_i+z_i\dot{z}_i)y_i}{(1-y_i^2-z_i^2)^{\frac{1}{2}}}\bigg]\cos(\alpha+a_i)\\
+\frac{c}{\sum_{i=1}^Nm_i(1-y_i^2-z_i^2)}\sum_{i=1}^Nm_i(1-y_i^2-z_i^2)^{\frac{1}{2}}y_i\sin(\alpha+a_i);
\end{split}
\end{equation}

--- total angular momentum relative to the $wz$-plane,
\begin{equation}
\label{angmom-wz}
\begin{split}
c_{wz}=\sum_{i=1}^Nm_i\bigg[(1-y_i^2-z_i^2)^{\frac{1}{2}}\dot{z}_i+\frac{(y_i\dot{y}_i+z_i\dot{z}_i)z_i}{(1-y_i^2-z_i^2)^{\frac{1}{2}}}\bigg]\cos(\alpha+a_i)\\
+\frac{c}{\sum_{j=1}^Nm_j(1-y_j^2-z_j^2)}\sum_{i=1}^Nm_i(1-y_i^2-z_i^2)^{\frac{1}{2}}z_i\sin(\alpha+a_i);
\end{split}
\end{equation}

--- total angular momentum relative to the $xy$-plane,
\begin{equation}
\label{angmom-xy}
\begin{split}
c_{xy}=\sum_{i=1}^Nm_i\bigg[(1-y_i^2-z_i^2)^{\frac{1}{2}}\dot{y}_i+\frac{(y_i\dot{y}_i+z_i\dot{z}_i)y_i}{(1-y_i^2-z_i^2)^{\frac{1}{2}}}\bigg]\sin(\alpha+a_i)\\
-\frac{c}{\sum_{j=1}^Nm_j(1-y_j^2-z_j^2)}\sum_{i=1}^Nm_i(1-y_i^2-z_i^2)^{\frac{1}{2}}y_i\cos(\alpha+a_i);
\end{split}
\end{equation}

--- total angular momentum relative to the $xz$-plane:
\begin{equation}
\label{angmom-xz}
\begin{split}
c_{xz}=\sum_{i=1}^Nm_i\bigg[(1-y_i^2-z_i^2)^{\frac{1}{2}}\dot{z}_i+\frac{(y_i\dot{y}_i+z_i\dot{z}_i)z_i}{(1-y_i^2-z_i^2)^{\frac{1}{2}}}\bigg]\sin(\alpha+a_i)\\
-\frac{c}{\sum_{j=1}^Nm_j(1-y_j^2-z_j^2)}\sum_{i=1}^Nm_i(1-y_i^2-z_i^2)^{\frac{1}{2}}z_i\cos(\alpha+a_i);
\end{split}
\end{equation}

--- total angular momentum relative to the $yz$-plane:
\begin{equation}
\label{angmom-yz}
c_{yz}=0.
\end{equation}
\end{proposition}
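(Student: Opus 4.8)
The plan is to substitute the explicit form \eqref{positive-elliptic} into each of the seven integrals of motion stated in Section 5 and simplify using the relations established in the proof of Criterion \ref{pe-existence}. The key auxiliary facts are: $r_i = (1-y_i^2-z_i^2)^{1/2}$, $\dot r_i = -(y_i\dot y_i + z_i\dot z_i)/(1-y_i^2-z_i^2)^{1/2}$, the formula for $\dot q_{ii}$ computed there, and above all the identity \eqref{al-pe}, namely $\dot\alpha = c/\sum_j m_j r_j^2$ with $c = c_{wx}$. So the first step is to record these substitution formulas, together with the derivatives $\dot w_i = \dot r_i\cos(\alpha+a_i) - r_i\dot\alpha\sin(\alpha+a_i)$ and $\dot x_i = \dot r_i\sin(\alpha+a_i) + r_i\dot\alpha\cos(\alpha+a_i)$.

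For the energy \eqref{energy-pe}, I would compute the kinetic energy $T = \frac12\sum_i m_i\dot{\bf q}_i\cdot\dot{\bf q}_i$ (using $\sigma{\bf q}_i\cdot{\bf q}_i = 1$ in $\mathbb S^3$): the $wx$-contribution gives $\dot w_i^2 + \dot x_i^2 = \dot r_i^2 + r_i^2\dot\alpha^2$, and after inserting $\dot r_i$ and summing, the cross terms organize into $[\dot y_i^2 + \dot z_i^2 - (y_i\dot z_i - z_i\dot y_i)^2]/(1-y_i^2-z_i^2)$ for the derivative part plus $r_i^2\dot\alpha^2$; then $\sum_i m_i r_i^2\dot\alpha^2 = \dot\alpha\cdot c = c^2/\sum_i m_i r_i^2$ by \eqref{al-pe}, which halved yields the second term of \eqref{energy-pe}. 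The potential term $-U = -\sum_{i<j} m_im_j q_{ij}/(1-q_{ij}^2)^{1/2}$ is immediate since in $\mathbb S^3$ the force function has exactly that form. For the angular momentum relative to $wx$, one has $\sum_i m_i(w_i\dot x_i - \dot w_i x_i) = \sum_i m_i r_i^2\dot\alpha = c$ by \eqref{al-pe}, giving \eqref{angmom-wx}. For $yz$, the components are $y_i = y_i(t)$, $z_i = z_i(t)$ with no common rotation, and the claim $c_{yz} = 0$ follows from the conservation law combined with the condition that this solution has no $yz$-rotation — concretely, one should argue (as the remark after Definition \ref{def-positive-elliptic} indicates) that $\sum_i m_i(y_i\dot z_i - \dot y_i z_i)$ is forced to vanish; I would verify this by multiplying the $u_i,v_i$-equations in \eqref{sys-crit-pe} appropriately and using the antisymmetry $\sum_{i,j} m_im_j[\,\cdot\,] = 0$ to show $\frac{d}{dt}\sum_i m_i(y_i v_i - z_i u_i) = 0$, then showing the constant is zero from the structure (or simply noting it is the free $yz$-angular-momentum value and appealing to the earlier papers). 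For the four mixed planes $wy, wz, xy, xz$, substitute $w_i = r_i\cos(\alpha+a_i)$ etc.\ and $\dot y_i, \dot z_i$ directly; e.g.\ $w_i\dot y_i - \dot w_i y_i = r_i\cos(\alpha+a_i)\dot y_i - [\dot r_i\cos(\alpha+a_i) - r_i\dot\alpha\sin(\alpha+a_i)]y_i$, and writing $\dot r_i = -(y_i\dot y_i+z_i\dot z_i)/r_i$ collects the $\cos(\alpha+a_i)$-bracket into $[r_i\dot y_i + (y_i\dot y_i+z_i\dot z_i)y_i/r_i]$ while the $r_i\dot\alpha\sin(\alpha+a_i)y_i$ term, after inserting \eqref{al-pe}, becomes the second sum in \eqref{angmom-wy}; the other three are analogous with $\sin\leftrightarrow\cos$ and $y\leftrightarrow z$.

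The main obstacle is bookkeeping rather than conceptual: the energy simplification requires carefully expanding $(y_i\dot y_i + z_i\dot z_i)^2$ and recombining with $\dot y_i^2 + \dot z_i^2$ to recognize the combination $\dot y_i^2 + \dot z_i^2 - (y_i\dot z_i - z_i\dot y_i)^2$ (this uses $(y_i\dot y_i+z_i\dot z_i)^2 + (y_i\dot z_i - z_i\dot y_i)^2 = (y_i^2+z_i^2)(\dot y_i^2+\dot z_i^2)$), and one must consistently use $r_i^2 = 1 - y_i^2 - z_i^2$ to clear the $\dot r_i^2$ terms. The other delicate point is justifying $c_{yz} = 0$: since the definition only stipulates $y_i = y_i(t)$, $z_i = z_i(t)$ without imposing a rotational ansatz there, one genuinely needs the angular-momentum integral plus the equations of motion to pin the constant to zero, and I expect the cleanest route is to observe that the $yz$-block of the motion, governed by equations \eqref{yi-new}--\eqref{zi-new}, conserves $\sum_i m_i(y_i\dot z_i - \dot y_i z_i)$ and that this quantity is evaluated to be zero at $t=0$ by the hypothesis (implicit in the classification) that the rotopulsator carries no rotation in the $yz$-plane — equivalently, one invokes Remark \ref{def-nonstandard} to view the orbit inside the elliptic-elliptic family with $\beta\equiv 0$, for which the $yz$-angular momentum manifestly vanishes.
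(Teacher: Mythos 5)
Your proposal is correct and takes essentially the same route as the paper, whose entire proof of Proposition~\ref{integrals-pe} is the one-line remark that the formulas ``follow by straightforward computations'' from substituting the ansatz \eqref{positive-elliptic} into the energy and six angular-momentum integrals of Section~5 and using $\dot\alpha=c/\sum_{j}m_jr_j^2$; your expansions of $\dot w_i^2+\dot x_i^2=\dot r_i^2+r_i^2\dot\alpha^2$, the identity $(y_i\dot y_i+z_i\dot z_i)^2+(y_i\dot z_i-z_i\dot y_i)^2=(y_i^2+z_i^2)(\dot y_i^2+\dot z_i^2)$, and the $wy,wz,xy,xz$ computations all check out. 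Your treatment of $c_{yz}=0$ is in fact more careful than the paper's: you correctly observe that the ansatz alone only forces $\sum_im_i(y_i\dot z_i-z_i\dot y_i)$ to be \emph{constant} (by the antisymmetry of the $yz$-block of \eqref{yi-new}--\eqref{zi-new}), so that pinning the constant to zero genuinely requires the additional no-$yz$-rotation input supplied by Remark~\ref{def-nonstandard} (viewing the orbit as elliptic-elliptic with $\beta\equiv 0$, for which $y_i\dot z_i-z_i\dot y_i=\rho_i\dot\rho_i(\cos b_i\sin b_i-\sin b_i\cos b_i)=0$), whereas the paper asserts this vanishing without argument.
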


The above result could be also used to prove the nonexistence of some candidates for positive elliptic rotopulsators, by showing that at least one of the above conservation laws is violated.

\subsection{Positive elliptic Lagrangian rotopulsators}

We further provide a class of specific examples of positive elliptic rotopulsators of the curved 3-body problem, namely Lagrangian orbits, i.e.\ bodies that lie at the vertices of a rotating equilateral triangle in $\mathbb S^3$ that changes size, which means that it fails to be similar to itself but has congruent sides at every instant in time. As we will see, these systems rotate relative to the plane $wx$, but have no rotations with respect to the other base planes.

Consider three equal masses, $m_1=m_2=m_3=:m>0$, and a candidate solution of the form
\begin{equation}\label{equiltr-pe}
\begin{split}
{\bf q}=({\bf q}_1,{\bf q}_2,{\bf q}_3), \ \ {\bf q}_i=(w_i,x_i,y_i,z_i), \ \ i=1,2,3,\hspace{1.cm}\\
w_1=r(t)\cos\alpha(t),\ x_1=r(t)\sin\alpha(t),\
y_1=y(t),\ z_1=z(t),\hspace{0.1cm}\\
w_2=r(t)\cos[\alpha(t)+2\pi/3],\ x_2=r(t)\sin[\alpha(t)+2\pi/3],\hspace{0.6cm}\\\
y_2=y(t),\ z_2=z(t),\hspace{3.4cm}\\\
w_3=r(t)\cos[\alpha(t)+4\pi/3],\ x_3=r(t)\sin[\alpha(t)+4\pi/3],\hspace{0.6cm}\\
y_3=y(t),\ z_3=z(t).\hspace{3.4cm}
\end{split}
\end{equation}
With the help of Criterion \ref{pe-existence}, we will further show that these are indeed solutions of curved 3-body problem in $\mathbb S^3$. 

\begin{proposition}
\label{prop-pe-L}
Consider the curved $3$-body problem in $\mathbb S^3$ given by system \eqref{positive} with $N=3$. Then, except for a negligible set of orbits formed by positive elliptic Lagrangian relative equilibria, every candidate solution of the form \eqref{equiltr-pe} is a positive elliptic Lagrangian rotopulsator, which rotates relative to the plane $wx$, but has no rotation with respect to the planes $wy, wz, xy, xz,$ and $yz$. 
\end{proposition}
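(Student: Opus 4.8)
The plan is to apply Criterion \ref{pe-existence} to the ansatz \eqref{equiltr-pe}, collapse its $5N$ equations to a small planar system, invoke the standard existence theorem, and then read the rotational behaviour off the conservation laws of Proposition \ref{integrals-pe}; throughout, the integration constant $c=c_{wx}$ is taken nonzero, as required for $\alpha$ to be nonconstant. First I would substitute \eqref{equiltr-pe} into the criterion. With $a_1=0$, $a_2=2\pi/3$, $a_3=4\pi/3$ and $r_i\equiv r$, $y_i\equiv y$, $z_i\equiv z$, the expression $q_{ij}=r_ir_j\cos(a_i-a_j)+y_iy_j+z_iz_j$ gives, for every $i\neq j$,
\[
q_{ij}=-\tfrac12 r^2+y^2+z^2=1-\tfrac32 r^2=:q,
\]
using $r^2+y^2+z^2=1$; thus all pairwise quantities collapse to one function $q$ of $r$, nonconstant exactly when $r$ is, and $q=\pm1$ would force $r=0$, a triple collision we discard.

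Because the three masses are equal and the $y$- and $z$-coordinates coincide, the functions $F_i$ of the criterion all reduce to a single common function $F$ of $y,z,\dot y,\dot z$, so the $\dot u_i$- and $\dot v_i$-equations become independent of $i$ and the first $4N$ equations of \eqref{sys-crit-pe} reduce to the planar second-order system
\[
\ddot y=\Big(\tfrac{2m(1-q)}{(1-q^2)^{3/2}}-F\Big)y,\qquad
\ddot z=\Big(\tfrac{2m(1-q)}{(1-q^2)^{3/2}}-F\Big)z,
\]
with $r=(1-y^2-z^2)^{1/2}$. In the last $N$ equations of \eqref{sys-crit-pe}, the right-hand side $\sum_{j\neq i}m\,r\sin(a_j-a_i)(1-q^2)^{-3/2}$ vanishes for each $i$ because $\sin(2\pi/3)+\sin(4\pi/3)=0$, so each of them reduces to $r\ddot\alpha+2\dot r\dot\alpha=0$ --- exactly the identity obtained by differentiating $\dot\alpha=c/(3mr^2)$ from \eqref{al-pe}. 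Hence \eqref{equiltr-pe} solves \eqref{positive} if and only if $\dot\alpha=c/(3mr^2)$ and $(y,z)$ solves the planar system.

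Next I would settle existence and genericity. The planar system together with $\dot\alpha=c/(3mr^2)$ is smooth on the open region $0<r<1$ (equivalently $-1<q<1$), so for every $c\neq0$ and every initial datum in that region the existence and uniqueness theorem gives a maximal solution, which by Criterion \ref{pe-existence} is a solution of \eqref{positive} of the form \eqref{equiltr-pe}. Such a solution is a Lagrangian relative equilibrium precisely when $q$, equivalently $r$, is constant; but $r\equiv r_0$ forces in particular $\dot r(0)=0$, a codimension-one condition on the initial data, together with the requirement that $r_0$ be one of the discretely many equilibrium radii determined by $m$, $c$, and the conserved quantity $y\dot z-z\dot y$. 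So the Lagrangian relative equilibria lie in a set of codimension at least one --- a negligible set --- and every other Lagrangian candidate with $c\neq0$ is a genuine positive elliptic Lagrangian rotopulsator.

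For the rotational behaviour, since $\dot\alpha=c/(3mr^2)\neq0$ the configuration performs a (generally nonuniform) elliptic rotation about the $wx$-plane, with $c_{wx}=c\neq0$ by \eqref{angmom-wx}. For the other five planes I would invoke Proposition \ref{integrals-pe}: by the equality of masses and of the $y$- and $z$-coordinates the bracketed coefficients in \eqref{angmom-wy}--\eqref{angmom-xz} are the same for all three bodies, so each of $c_{wy},c_{wz},c_{xy},c_{xz}$ is proportional to $\sum_{i=1}^3\cos(\alpha+a_i)$ or $\sum_{i=1}^3\sin(\alpha+a_i)$, both of which vanish since $\{a_1,a_2,a_3\}=\{0,2\pi/3,4\pi/3\}$; and $c_{yz}=0$ by \eqref{angmom-yz}. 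This gives the asserted absence of rotation relative to $wy$, $wz$, $xy$, $xz$, and $yz$. The substitution into the criterion is routine; the genuinely delicate step is making ``negligible set of relative equilibria'' precise --- identifying exactly which radii $r$ and which integration constants $c$ permit $r$ to remain constant, and checking that this locus is thin in the space of initial conditions.
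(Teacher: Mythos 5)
Your proposal is correct and follows essentially the same route as the paper: apply Criterion \ref{pe-existence} to \eqref{equiltr-pe}, observe that all $q_{ij}$ collapse to $1-\tfrac{3}{2}r^2$ and that the $\alpha$-equations are identically satisfied by $\dot\alpha=c/(3mr^2)$, invoke standard ODE existence/uniqueness, identify the relative equilibria with a discrete set of equilibrium radii, and read the rotation statements off Proposition \ref{integrals-pe} together with the vanishing trigonometric sums. The only difference is cosmetic: the paper additionally uses $c_{yz}=0\Rightarrow y\dot z-z\dot y=0\Rightarrow y=\gamma z$ to reduce to a one-dimensional system and exhibit the equilibria as roots of an explicit degree-six polynomial, whereas you count the equilibrium radii directly in the planar system --- both yield the same negligibility conclusion.
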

\begin{proof}
Let us consider a candidate solution of the form \eqref{equiltr-pe}. Then, using Criterion \ref{pe-existence}, straightforward computations show that
$$
q_{12}=q_{13}=q_{23}=\frac{3y^2+3z^2-1}{2},\ \  \dot\alpha=\frac{c}{3mr^2},
$$
the equations in \eqref{sys-crit-pe} involving $\dot\alpha$ and $\ddot\alpha$ are identically satisfied,
and that the variables $y$ and $z$ must satisfy the equations
\begin{equation}
\label{ddotxy}
\begin{cases}
\dot y = u\cr
\dot z = v\cr
\dot{u}=F(y,z,u,v)y\cr
\dot{v}=F(y,z,u,v)z,
\end{cases}
\end{equation}
where
$$
F(y,z,u,v)=\frac{8m}{\sqrt{3}(1-y^2-z^2)^{\frac{1}{2}}(1+3y^2+3z^2)^{\frac{3}{2}}}-\frac{c^2}{9m^2(1-y^2-z^2)}
$$
$$
-\frac{u^2+v^2-(yv-uz)^2}{1-y^2-z^2}.
$$
From \eqref{ddotxy}, we can conclude that $\ddot y z=y\ddot z$, which implies that 
$$
y\dot z-z\dot y=k\ {\rm (constant)}.
$$
But, since from \eqref{angmom-yz} we have that $3m(y\dot z-z\dot y)=c_{yz}$, it follows that $k=c_{yz}/3m$. As, by Proposition \ref{integrals-pe}, $c_{yz}=0$, we must take $k=0$, so $y\dot z-z\dot y=0$, and therefore $\frac{d}{dt}\frac{y}{z}=0$ if $z$ does not take zero values, so $y(t)=\gamma z(t)$, where $\gamma$ is a constant. Moreover, since 
$$
\sin\alpha+\sin(\alpha+2\pi/3)+\sin(\alpha+4\pi/3)=\cos\alpha+\cos(\alpha+2\pi/3)+\cos(\alpha+4\pi/3)=0,
$$
it follows from \eqref{angmom-wy}, \eqref{angmom-wz}, \eqref{angmom-xy}, and \eqref{angmom-xz} that $c_{wy}=c_{wz}=c_{xy}=c_{xz}=0$, so the triangle has no rotation relative to the planes $wy, wz, xy, xz$, and $yz$.

If we denote $\delta=\gamma^2+1\ge 1$ (which implies that
$u=\gamma v$), substitute $y$ for $\gamma z$ (and therefore $u$ for $\gamma v$), make the change of variable $\bar z=\sqrt{\delta} z, \bar v=\sqrt{\delta} v$, and redenote $\bar z, \bar v$ by $z$ and $v$, respectively, system \eqref{ddotxy} reduces to the family of first order systems
\begin{equation}
\label{sys-L-pe}
\begin{cases}
\dot z=v\cr
\dot v=\Big[\frac{8m}{\sqrt{3}(1-z^2)^{1/2}(1+3 z^2)^{3/2}}-\frac{c^2}{9m^2(1-z^2)}-\frac{v^2}{1-z^2}\Big]z.
\end{cases}
\end{equation}
The fixed points of this system correspond to relative equilibria. An obvious one is $(z,v)=(0,0)$, which is a Lagrangian relative equilibrium rotating on a great circle of a great sphere of $\mathbb S^3$. The other fixed points are given by the polynomial equation of degree six
\begin{equation}\label{pol-Lagr-pe}
1728m^3(1-z^2)=c^2(1+3z^2)^3.
\end{equation}
This means that, for $m$ and $c$ fixed, there is a finite number of relative equilibria. Standard results of the theory of ordinary differential equations can now be applied to system \eqref{sys-L-pe} to prove the existence and uniqueness of analytic positive elliptic Lagrangian rotopulsators, for admissible initial conditions. This remark completes the proof.
\end{proof}

Although we proved the existence of the positive elliptic Lagrangian rotopulsators, it would be still interesting to learn more about their nature in terms of the energy constant. For this, we will first find the fixed points for the vector field of system \eqref{sys-L-pe} different from the obvious one, $(z,v)=(0,0)$, which is common to all equations in the family.
Using the energy relation \eqref{energy-pe}, obtaining the solutions of equation \eqref{pol-Lagr-pe} reduces to finding the zeroes of the family of polynomials
$$
P(z)=27(9m^4+h^2)z^8-18(15m^4+h^2)z^4-8h^2 z^2+75m^4-h^2.
$$
According to Descartes's rule of signs, we have to distinguish between two cases:

(i) $|h|<5\sqrt{3}m^2$, when, for every fixed values of the parameters, $P$ has either two positive roots or no positive root at all;

(ii) $|h|\ge 5\sqrt{3}m^2$, when, for every fixed values of the parameters, $P$ has exactly one positive root.

Case (ii) always leads to one fixed point since the unique positive root, $z_0:=z_0(m,h,\epsilon)$, has the property $|z_0|<1$, since $P(1)=48m^4>0$ and $P(0)=75m^4-h^2\le 0$. Then the corresponding eigenvalues $\lambda_{1,2}$ are given by the equation 
$$
\lambda+\frac{2h}{3m}-W(z_0, m, \epsilon)=0,
$$
where $W(z_0, m, \epsilon)$ is a finite number for every fixed value of the parameters. 
Independently of the values of $W(z_0, m, \epsilon)$, the eigenvalues show that $z$ varies for every orbit that is not a fixed point. Similar conclusions can be drawn in case (i).

Numerical experiments suggest that all the other orbits of system \eqref{sys-L-pe} are periodic, except for two 
homoclinic orbits. Since the periods of $\alpha$ and $z$ don't usually match, the periodic orbits generate
quasiperiodic positive elliptic Lagrangian rotopulsators, except for a negligible class, given by periodic positive elliptic Lagrangian rotopulsators (see Fig.\ \ref{Lagr-pe}).

\begin{figure}[htbp] 
   \centering
   \includegraphics[width=2in]{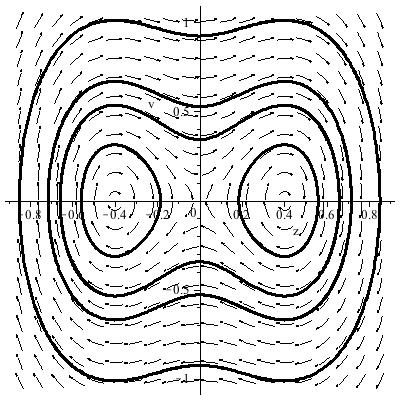}
   \caption{A typical phase portrait for system \eqref{sys-L-pe}.}
   \label{Lagr-pe}
\end{figure}

\begin{remark}
When $y$ or $z$ is a nonzero constant, the motion takes place on a 2-dimensional nongreat sphere. When $y\equiv0$ or $z\equiv0$, the motion is confined to a 2-dimensional great sphere, i.e.\ to $\mathbb S^2$. The latter case corresponds to a class of Lagrangian orbits for which we gave a complete classification in \cite{Diacu5}.
\end{remark}

\section{Positive elliptic-elliptic rotopulsators}

In this section we analyze the solutions given in Definition \ref{def-positive-elliptic-elliptic}. We first prove a criterion for finding such solutions, then obtain the conservation laws, and finally analyze two particular class of examples. In the first case we prove the existence of positive elliptic-elliptic rotopulsators in the 2-body problem in $\mathbb S^3$. In the second case we show that the positive elliptic-elliptic Lagrangian orbits of the 3-body problem in $\mathbb S^3$ are always relative equilibria, so they cannot form rotopulsators. 

\subsection{Criterion for positive elliptic-elliptic rotopulsators or relative equilibria}

The following result provides necessary and sufficient conditions for the existence of positive elliptic-elliptic rotopulsators or relative equilibria in $\mathbb S^3$ .

\begin{criterion}
\label{pee-existence}
A solution candidate of the form \eqref{positive-elliptic-elliptic} is a positive elliptic-elliptic rotopulsator for system \eqref{positive} if and only if
\begin{equation}
\label{alpha-and-beta}
\dot\alpha=\frac{c_1}{\sum_{i=1}^Nm_ir_i^2}, \ \ \ \dot\beta=\frac{c_2}{M-\sum_{i=1}^Nm_ir_i^2},
\end{equation}
with $c_1,c_2, M=\sum_{i=1}^Nm_i$ nonzero constants, there are at least two distinct indices $i,j\in\{1,2,\dots,N\}$ such that $q_{ij}$ is not constant, and the variables $r_1, r_2,\dots, r_N$ satisfy the first-order system of\ \! $4N$ equations,
\begin{equation}
\label{r-pee-new-new}
\begin{cases}
\dot r_i=s_i\cr
\dot{s}_i=G_i({\bf r}, s_i)\cr
r_i\ddot\alpha+2\dot{r}_i\dot\alpha=\sum_{\stackrel{j=1}{j\ne i}}^N
\frac{m_jr_j\sin(a_j-a_i)}{(1-q_{ij}^2)^{3/2}}\cr
\rho_i\ddot\beta+2\dot{\rho}_i\dot\beta=\sum_{\stackrel{j=1}{j\ne i}}^N
\frac{m_j\rho_j\sin(b_j-b_i)}{(1-q_{ij}^2)^{3/2}},
\end{cases}
\end{equation}
where $r_i^2+\rho_i^2=1, \ i=1,2,\dots, N$, ${\bf r}=(r_1, r_2,\dots, r_N)$, 
\begin{equation}
\begin{split}
G_i({\bf r}, s_i)=r_i(1-r_i^2)\bigg[\frac{c_1^2}{(\sum_{i=1}^Nm_ir_i^2)^2}-\frac{c_2^2}{(M-\sum_{i=1}^Nm_ir_i^2)^2}\bigg]-\frac{r_is_i^2}{1-r_i^2}\hspace{0.7cm}\\
+\sum_{\stackrel{j=1}{j\ne i}}^N\frac{m_j[r_j(1-r_i^2)\cos(a_i-a_j)-r_i(1-r_i^2)^{\frac{1}{2}}(1-r_j^2)^{\frac{1}{2}}\cos(b_i-b_j)]}{(1-q_{ij}^2)^{\frac{3}{2}}},
\end{split}
\end{equation}
$i=1,2,\dots, N$, and for any $i,j\in\{1,2,\dots,N\}$ with $i\ne j$,
\begin{equation}
\label{epsilonij}
q_{ij}=r_ir_j\cos(a_i-a_j)+(1-r_i^2)^{\frac{1}{2}}(1-r_j^2)^{\frac{1}{2}}\cos(b_i-b_j).
\end{equation}
If $q_{ij}$ are constant for all distinct $i,j\in\{1,2,\dots,N\}$, then the solution is a relative equilibrium. If $q_{ij}=\pm 1$ for some distinct $i,j\in\{1,2,\dots,N\}$, then such solutions don't exist.
\end{criterion}
\begin{proof}
Consider a candidate solution of the form \eqref{positive-elliptic-elliptic} for system \eqref{positive}. By expressing each $\rho_i$ in terms of $r_i$, $i=1,2,\dots,N$, we obtain that
$$
\rho_i=(1-r_i^2)^{\frac{1}{2}},\ \ \ \dot\rho_i=-\frac{r_i\dot r_i}{(1-r_i^2)^{\frac{1}{2}}},\ \ \
\ddot\rho_i=-\frac{\dot{r}_i^2+r_i(1-r_i^2)\ddot{r}_i}{(1-r_i^2)^{\frac{3}{2}}}.
$$
Then $q_{ij}$ takes the form \eqref{epsilonij}, and
$$
\dot q_{ij}=\dot{r}_i^2+r_i^2\dot\alpha^2+\frac{r_i^2\dot{r}_i^2}{1-r_i^2}+(1-r_i^2)\dot\beta^2.
$$
Substituting a solution candidate of the form \eqref{positive-elliptic-elliptic} into system \eqref{positive}, and using the above formulas, we obtain for the equations corresponding to $\ddot{w}_i$ and  $\ddot{x}_i$ the equations 
\begin{equation}
\label{r-pee}
\begin{split}
\ddot{r}_i=r_i(1-r_i^2)(\dot\alpha^2-\dot\beta^2)-\frac{r_i\dot{r}_i^2}{1-r_i^2}\hspace{3.5cm}\\
+\sum_{\stackrel{j=1}{j\ne i}}^N\frac{m_j[r_j(1-r_i^2)\cos(a_i-a_j)-r_i(1-r_i^2)^{\frac{1}{2}}(1-r_j^2)^{\frac{1}{2}}\cos(b_i-b_j)]}{(1-\epsilon_{ij}^2)^{\frac{3}{2}}},
\end{split}
\end{equation}
\begin{equation}
\label{alpha-pee}
r_i\ddot\alpha+2\dot{r}_i\dot\alpha=-\sum_{\stackrel{j=1}{j\ne i}}^N
\frac{m_jr_j\sin(a_i-a_j)}{(1-q_{ij}^2)^{\frac{3}{2}}}, \ \ i=1,2,\dots, N,
\end{equation} 
respectively, whereas for the equations corresponding to $\ddot{y}_i, \ddot{z}_i$, we find equations \eqref{r-pee} again as well as the equations
\begin{equation}
\label{beta-pee}
\rho_i\ddot\beta+2\dot{\rho}_i\dot\beta=-\sum_{\stackrel{j=1}{j\ne i}}^N
\frac{m_j\rho_j\sin(b_i-b_j)}{(1-q_{ij}^2)^{\frac{3}{2}}}, \ \ i=1,2,\dots, N.
\end{equation} 
We can solve equations \eqref{alpha-pee} the same way we solved equations \eqref{alpha-pe} and obtain
$$
\dot\alpha=\frac{c_1}{\sum_{i=1}^Nm_ir_i^2},
$$
where $c_1$ is an integration constant. To solve equations \eqref{beta-pee}, we proceed similarly, with the only change that for each $i=1,2,\dots,N$, the corresponding equation gets multiplied by $m_i(1-r_i^2)^{\frac{1}{2}}$ instead of $m_ir_i$, to obtain
after addition that
$$
\dot\beta=\frac{c_2}{M-\sum_{i=1}^Nm_ir_i^2},
$$
where $M=\sum_{i=1}^Nm_i$ and $c_2$ is an integration constant. Then equations \eqref{r-pee}, \eqref{alpha-pee}, \eqref{beta-pee} form system \eqref{r-pee-new-new}. Using the above expressions of $\dot\alpha$ and $\dot\beta$, we can conclude the same way as we did in the proof of Criterion 1 that \eqref{r-pee-new-new} is a first-order system of $4N$ equations with no constraints. The part of the criterion related to relative equilibria follows directly from Definition \ref{def-positive-elliptic-elliptic}. The nonexistence of such solutions if some $q_{ij}=\pm 1$ follows from the fact that at least a denominator cancels in the equations of motion. This remark completes the proof.
\end{proof}

\begin{remark}
It follows from \eqref{alpha-and-beta} that $\dot\alpha$ and $\dot\beta$ are connected by the relationship
\begin{equation}
\label{rel-alpha-beta}
\frac{c_1}{\dot\alpha}+\frac{c_2}{\dot\beta}=M,
\end{equation}
written under the assumption that $\alpha$ and $\beta$ are not constant. In particular, if $\alpha$ and $\beta$ differ only by an additive constant, they are linear functions of time, i.e.
\begin{equation*}
\dot\alpha=\dot\beta=\frac{c_1+c_2}{M}.
\end{equation*}
\end{remark}

\subsection{Conservation laws for positive elliptic-elliptic rotopulsators}

In addition to Criterion \ref{pee-existence}, we would also like to obtain the conservation laws specific to positive elliptic-elliptic rotopulsators. These laws follow by straightforward computations using the above proof, the integral of energy, and the six integrals of the total angular momentum.

\begin{proposition}\label{integrals-pee}
If system \eqref{positive} has a solution of the form \eqref{positive-elliptic-elliptic}, then the following expressions are constant:

--- energy:
\begin{equation}
\label{energy-pee}
h=\sum_{i=1}^N\frac{m_i\dot{r}_i^2}{2(1-r_i^2)}+\frac{c_1^2}{2\sum_{j=1}^Nm_jr_j^2}+\frac{c_2^2}{2(M-\sum_{j=1}^Nm_jr_j^2)}
-\sum_{1\le i<j\le N}\frac{m_im_jq_{ij}}{(1-q_{ij}^2)^{\frac{1}{2}}};
\end{equation}

--- total angular momentum relative to the $wx$-plane,
\begin{equation}
\label{cwx-pee}
c_{wx}=c_1,
\end{equation}
where $c_1\ne 0$ is the constant in the expression \eqref{alpha-and-beta} of $\dot\alpha$;

--- total angular momentum relative to the $wy$-plane, 
\begin{equation}
\label{cwy-pee}
\begin{split}
c_{wy}=\frac{1}{2}\sum_{i=1}^Nm_i\bigg[r_i(1-r_i^2)^{\frac{1}{2}}(\dot\alpha+\dot\beta)\sin(\alpha-\beta+a_i-b_i)\hspace{2cm}\\
+r_i(1-r_i^2)^{\frac{1}{2}}(\dot\alpha-\dot\beta)\sin(\alpha+\beta+a_i+b_i)-
\frac{\dot{r}_i}{(1-r_i^2)^{\frac{1}{2}}}\cos(\alpha-\beta+a_i-b_i)\\
-\frac{\dot{r}_i}{(1-r_i^2)^{\frac{1}{2}}}\cos(\alpha+\beta+a_i+b_i)\bigg];\hspace{3.3cm}
\end{split}
\end{equation}

--- total angular momentum relative to the $wz$-plane,
\begin{equation}
\label{cwz-pee}
\begin{split}
c_{wz}=\frac{1}{2}\sum_{i=1}^Nm_i\bigg[r_i(1-r_i^2)^{\frac{1}{2}}(\dot\alpha+\dot\beta)\cos(\alpha-\beta+a_i-b_i)\hspace{2cm}\\
-r_i(1-r_i^2)^{\frac{1}{2}}(\dot\alpha-\dot\beta)\cos(\alpha+\beta+a_i+b_i)
+\frac{\dot{r}_i}{(1-r_i^2)^{\frac{1}{2}}}\sin(\alpha-\beta+a_i-b_i)\\
-\frac{\dot{r}_i}{(1-r_i^2)^{\frac{1}{2}}}\sin(\alpha+\beta+a_i+b_i)\bigg];\hspace{3.3cm}
\end{split}
\end{equation}

--- total angular momentum relative to the $xy$-plane,
\begin{equation}
\label{cxy-pee}
\begin{split}
c_{xy}=-\frac{1}{2}\sum_{i=1}^Nm_i\bigg[r_i(1-r_i^2)^{\frac{1}{2}}(\dot\alpha+\dot\beta)\cos(\alpha-\beta+a_i-b_i)\hspace{1.8cm}\\
+r_i(1-r_i^2)^{\frac{1}{2}}(\dot\alpha-\dot\beta)\cos(\alpha+\beta+a_i+b_i)+
\frac{\dot{r}_i}{(1-r_i^2)^{\frac{1}{2}}}\sin(\alpha-\beta+a_i-b_i)\\
+\frac{\dot{r}_i}{(1-r_i^2)^{\frac{1}{2}}}\sin(\alpha+\beta+a_i+b_i)\bigg];\hspace{3.3cm}
\end{split}
\end{equation}

--- total angular momentum relative to the $xz$-plane,
\begin{equation}
\label{cxz-pee}
\begin{split}
c_{xz}=\frac{1}{2}\sum_{i=1}^Nm_i\bigg[r_i(1-r_i^2)^{\frac{1}{2}}(\dot\alpha+\dot\beta)\sin(\alpha-\beta+a_i-b_i)\hspace{2cm}\\
-r_i(1-r_i^2)^{\frac{1}{2}}(\dot\alpha-\dot\beta)\sin(\alpha+\beta+a_i+b_i)-
\frac{\dot{r}_i}{(1-r_i^2)^{\frac{1}{2}}}\cos(\alpha-\beta+a_i-b_i)\\
+\frac{\dot{r}_i}{(1-r_i^2)^{\frac{1}{2}}}\cos(\alpha+\beta+a_i+b_i)\bigg];\hspace{3.3cm}
\end{split}
\end{equation}

--- total angular momentum relative to the $yz$-plane,
\begin{equation}
\label{cwx-pee}
c_{yz}=c_2,
\end{equation}
where $c_2\ne 0$ is the constant in the expression \eqref{alpha-and-beta} of $\dot\beta$.
\end{proposition}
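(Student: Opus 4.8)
The plan is to substitute the parametrization \eqref{positive-elliptic-elliptic} directly into the seven first integrals recorded earlier — the energy $T-U$ and the six components of the total angular momentum $\sum_i m_i\mathbf q_i\wedge\dot{\mathbf q}_i$ — and then simplify using the auxiliary identities already established in the proof of Criterion \ref{pee-existence}. The ingredients I would reuse are $\rho_i=(1-r_i^2)^{1/2}$, $\dot\rho_i=-r_i\dot r_i/(1-r_i^2)^{1/2}$, the expression \eqref{epsilonij} for $q_{ij}$, and, crucially, the first integrals $\dot\alpha=c_1/\sum_j m_jr_j^2$ and $\dot\beta=c_2/(M-\sum_j m_jr_j^2)$ from \eqref{alpha-and-beta}; since a solution of the form \eqref{positive-elliptic-elliptic} has $\alpha$ and $\beta$ nonconstant by definition, Criterion \ref{pee-existence} applies and in particular $c_1,c_2\neq 0$.

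For the energy, I would first note that for each body the $wx$-pair contributes $\dot w_i^2+\dot x_i^2=\dot r_i^2+r_i^2\dot\alpha^2$ while the $yz$-pair contributes $\dot y_i^2+\dot z_i^2=\dot\rho_i^2+\rho_i^2\dot\beta^2=r_i^2\dot r_i^2/(1-r_i^2)+(1-r_i^2)\dot\beta^2$, so that $\dot{\mathbf q}_i\cdot\dot{\mathbf q}_i=\dot r_i^2/(1-r_i^2)+r_i^2\dot\alpha^2+(1-r_i^2)\dot\beta^2$. As $\mathbf q_i\cdot\mathbf q_i=1$ and $\sigma=+1$ in $\mathbb S^3$, the kinetic energy is simply $T=\tfrac12\sum_i m_i\,\dot{\mathbf q}_i\cdot\dot{\mathbf q}_i$; substituting the expressions for $\dot\alpha$ and $\dot\beta$ collapses $\sum_i m_ir_i^2\dot\alpha^2$ into $c_1^2/\sum_j m_jr_j^2$ and $\sum_i m_i(1-r_i^2)\dot\beta^2$ into $c_2^2/(M-\sum_j m_jr_j^2)$. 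Adjoining $-U$, with $q_{ij}$ as in \eqref{epsilonij}, yields precisely \eqref{energy-pee}.

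For the angular momentum, the two ``diagonal'' components are immediate: in $w_i\dot x_i-\dot w_i x_i$ the $\dot r_i$ terms cancel and one is left with $r_i^2\dot\alpha$, so $c_{wx}=\dot\alpha\sum_i m_ir_i^2=c_1$; likewise $c_{yz}=\dot\beta\sum_i m_i(1-r_i^2)=\dot\beta(M-\sum_i m_ir_i^2)=c_2$. The four mixed components $c_{wy},c_{wz},c_{xy},c_{xz}$ are where the actual labor lies. For each I would expand the relevant wedge, e.g.\ $w_i\dot y_i-\dot w_i y_i$, by the product rule, use $r_i\dot\rho_i-\dot r_i\rho_i=-\dot r_i/(1-r_i^2)^{1/2}$ and $r_i\rho_i=r_i(1-r_i^2)^{1/2}$, and convert every product of a trigonometric function in $\alpha+a_i$ with one in $\beta+b_i$ to a sum via the product-to-sum identities. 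This is exactly what produces the arguments $\alpha-\beta+a_i-b_i$, $\alpha+\beta+a_i+b_i$ and the coefficients $\tfrac12(\dot\alpha\pm\dot\beta)$ that appear in \eqref{cwy-pee}--\eqref{cxz-pee}; matching term by term then finishes the proof. The same computations, without invoking the requirement that some $q_{ij}$ be nonconstant, also cover the relative-equilibrium case.

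The only genuine obstacle is bookkeeping: each mixed component unpacks into eight trigonometric monomials, and one must keep the signs and the pairing of $(\dot\alpha+\dot\beta)$ against $(\dot\alpha-\dot\beta)$ straight. A convenient organizing device is to write $w_i+\mathrm ix_i=r_i e^{\mathrm i(\alpha+a_i)}$ and $y_i+\mathrm iz_i=\rho_i e^{\mathrm i(\beta+b_i)}$, so that each mixed wedge product becomes the real or imaginary part of an expression of the form $(\,\cdot\,)\,e^{\mathrm i(\alpha\pm\beta+a_i\pm b_i)}$; this reduces the identities to a couple of lines and makes the $\pm$ structure transparent. No analytic subtlety enters — the proposition is, as its preamble says, a direct computational consequence of the parametrization together with \eqref{alpha-and-beta}.
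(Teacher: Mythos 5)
Your proposal is correct and follows exactly the route the paper intends: the paper gives no explicit proof, stating only that the laws ``follow by straightforward computations'' from the parametrization, the expressions \eqref{alpha-and-beta} for $\dot\alpha,\dot\beta$, and the seven first integrals, and your substitution (with $\dot{\mathbf q}_i\cdot\dot{\mathbf q}_i=\dot r_i^2/(1-r_i^2)+r_i^2\dot\alpha^2+(1-r_i^2)\dot\beta^2$, the identity $r_i\dot\rho_i-\dot r_i\rho_i=-\dot r_i/(1-r_i^2)^{1/2}$, and the product-to-sum formulas) reproduces each stated formula, as I have checked for \eqref{energy-pee}, \eqref{cwx-pee}, \eqref{cwy-pee}, and $c_{yz}=c_2$. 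No gap.
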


The above result could also be used to prove the nonexistence of some candidates for positive elliptic rotopulsating orbits, by showing that at least one of the above conservation laws is violated.

\subsection{Positive elliptic-elliptic rotopulsators for $N=2$}

We further show the existence of a class of positive elliptic-elliptic rotopulsators of the 2-body problem in $\mathbb S^3$. These binary systems rotate relative to the planes $wx$ and $yz$, but have no rotations with respect to the planes $wy, wz, xy$, and $xz$.

Consider two equal masses, $m_1=m_2=:m>0$, and a candidate solution of the form
\begin{equation}\label{2-pee}
\begin{split}
{\bf q}=({\bf q}_1,{\bf q}_2),\ {\bf q}_i=(w_i,x_i,y_i,z_i), \ i=1,2,\hspace{3cm}\\
w_1=r(t)\cos\alpha(t), \ x_1=r(t)\sin\alpha(t),\ y_1=\rho(t)\cos\beta(t),\ z_1=\rho(t)\sin\beta(t),\\
w_2=r(t)\cos[\alpha(t)+\pi], \ x_2=r(t)\sin[\alpha(t)+\pi],\hspace{2.4cm}\\ y_2=\rho(t)\cos\beta(t),\ z_2=\rho(t)\sin\beta(t),\hspace{3.3cm}
\end{split}
\end{equation}
with $\alpha$ and $\beta$ nonconstant functions and $r^2+\rho^2=1$. We can prove now the following result, which shows the existence of rotopulsators of the above form.

\begin{proposition}
Consider the curved $2$-body problem in $\mathbb S^3$ given by system \eqref{positive} with $N=2$. Then, except for a negligible class of relative equilibria, every candidate solution of the form \eqref{2-pee} is a positive elliptic-elliptic rotopulsator, which rotates relative to the planes $wx$ and $yz$, but has no rotation with respect to the planes $wy, wz, xy,$ and $xz$.
\end{proposition}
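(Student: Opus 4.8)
The plan is to apply Criterion~\ref{pee-existence} to the case $N=2$ with $m_1=m_2=m$ and the phase constants forced by \eqref{2-pee}, namely $a_1=0$, $a_2=\pi$, $b_1=b_2=0$. First I would read off from \eqref{epsilonij} that, since $r_1=r_2=r$, $\cos(a_1-a_2)=\cos\pi=-1$ and $\cos(b_1-b_2)=1$, one has
\begin{equation*}
q_{12}=-r^2+(1-r^2)=1-2r^2,\qquad 1-q_{12}^2=4r^2(1-r^2),
\end{equation*}
so $q_{12}$ is a strictly monotone function of $r$ on $(0,1)$. Then I would observe that the two equations in \eqref{r-pee-new-new} carrying $\ddot\alpha$ and $\ddot\beta$ have vanishing right-hand sides, because $\sin(a_j-a_i)=\sin(\pm\pi)=0$ and $\sin(b_j-b_i)=0$; they reduce to $r\ddot\alpha+2\dot r\dot\alpha=0$ and $\rho\ddot\beta+2\dot\rho\dot\beta=0$, which are exactly the time-derivatives of $\dot\alpha=c_1/(2mr^2)$ and $\dot\beta=c_2/(2m\rho^2)$ from \eqref{alpha-and-beta}, hence automatically consistent for arbitrary nonzero constants $c_1,c_2$ (nonzero being required so that $\alpha,\beta$ are genuinely nonconstant).

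The core of the argument is the reduction of the first two lines of \eqref{r-pee-new-new} to a planar system. Using the symmetry of the ansatz \eqref{2-pee} under exchanging the two bodies (which sends $a_i\mapsto a_i+\pi$ and leaves $b_i$ fixed), I would check that $G_1(\mathbf r,s_1)=G_2(\mathbf r,s_2)$ whenever $r_1=r_2$ and $s_1=s_2$, so that the diagonal $r_1=r_2=:r$, $s_1=s_2=:s$ is invariant. On it, substituting $q_{12}=1-2r^2$ and hence $(1-q_{12}^2)^{3/2}=8r^3(1-r^2)^{3/2}$ into $G_i$, the system collapses to
\begin{equation*}
\dot r=s,\qquad
\dot s=\frac{c_1^2(1-r^2)}{4m^2r^3}-\frac{c_2^2\,r}{4m^2(1-r^2)}-\frac{r s^2}{1-r^2}-\frac{m}{4r^2\sqrt{1-r^2}},
\end{equation*}
an analytic planar vector field on the strip $0<r<1$.

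Next I would invoke the standard existence and uniqueness theory for ordinary differential equations: for every admissible initial condition (i.e.\ $0<r(0)<1$ and $c_1,c_2\ne 0$) this planar system has a unique analytic solution, and reconstructing $\alpha,\beta$ by quadratures from \eqref{alpha-and-beta} yields a solution of \eqref{positive} of the form \eqref{2-pee}. The fixed points of the planar system are precisely the relative equilibria: setting $s=0$ and clearing denominators turns the second equation into a single algebraic condition on $r$ (after one squaring, a polynomial equation), so for fixed $m,c_1,c_2$ only finitely many values of $r$ occur, and they account for the announced negligible class. For every other initial condition $r$, and hence $q_{12}=1-2r^2$, is nonconstant, so the unique pair $i,j$ has $q_{ij}$ nonconstant and the solution is a genuine positive elliptic-elliptic rotopulsator. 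Finally, the rotational behaviour follows from Proposition~\ref{integrals-pee}: \eqref{cwx-pee} and $c_{yz}=c_2$ give nonzero rotation rates relative to the $wx$- and $yz$-planes, while in \eqref{cwy-pee}–\eqref{cxz-pee} the equal masses, $r_1=r_2$, $\dot r_1=\dot r_2$, together with the shift $a_2-b_2=a_2+b_2=\pi$ (which flips every sine and cosine in the $i=2$ summand relative to the $i=1$ summand), force the two summands to cancel, so $c_{wy}=c_{wz}=c_{xy}=c_{xz}=0$.

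The step I expect to be the main obstacle is verifying $G_1=G_2$ on the diagonal and carrying out the substitution $q_{12}=1-2r^2$ cleanly in $G_i$; this is the one place where a genuine, if routine, computation is unavoidable, whereas the ODE existence argument and the conservation-law bookkeeping are straightforward.
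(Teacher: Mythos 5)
Your proposal is correct and follows essentially the same route as the paper: apply Criterion \ref{pee-existence}, note that the $\ddot\alpha$- and $\ddot\beta$-equations are identically satisfied, reduce to the planar system \eqref{2b-pee}, invoke standard ODE existence and uniqueness, observe that the fixed points (relative equilibria) form a negligible set, and read off the rotation pattern from Proposition \ref{integrals-pee}. Your sign $q_{12}=1-2r^2$ is in fact the correct one (the paper's $2r^2-1$ is a typo, immaterial since only $q_{12}^2$ enters), and your explicit check that $G_1=G_2$ on the diagonal and that the $i=1,2$ summands cancel in \eqref{cwy-pee}--\eqref{cxz-pee} just makes precise what the paper leaves as "straightforward computation."
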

\begin{proof}
From Criterion \ref{pee-existence} we notice that
$$
q_{12}=2r^2-1,\ \dot\alpha=\frac{c_1}{2mr^2},\ 
\dot\beta=\frac{c_2}{2m(1-r^2)},
$$
and that a candidate solution of the form \eqref{2-pee}
leads to the family of first-order systems
\begin{equation}\label{2b-pee}
\begin{cases}
\dot r = s\cr
\dot s = r(1-r^2)\Big[\frac{c_1}{4m^2r^4}-\frac{c_2}{4m^2(1-r^2)^2}\Big]-\frac{rs^2}{1-r^2}-\frac{m}{4r^2(1-r^2)^{1/2}},
\end{cases}
\end{equation} 
since the equations involving $\dot\alpha, \ddot\alpha$ and $\dot\beta,\ddot\beta$, respectively, in system \eqref{r-pee-new-new} are identically satisfied. Standard results of the theory of differential equations prove the existence and uniqueness of analytic solutions for nonsingular initial conditions attached to the above system. For fixed values of $m>0, c_1\ne 0,$ and $c_2\ne 0$, the number of fixed points of system \eqref{2b-pee} is obviously finite, so the set of fixed points is negligible when these constants vary. Therefore, except for the negligible set of orbits corresponding to fixed points, which are relative equilibria because $r$ is constant, all the solutions of the form \eqref{2-pee} are rotopulsators since $r$ (and consequently the mutual distance $q_{12}$ between the bodies) varies.

It follows from Proposition \ref{integrals-pee} that 
$c_{wx}=c_1, c_{yz}=c_2$, with $c_1,c_2\ne 0$, and that $c_{wy}=c_{wz}=c_{xy}=c_{xz}=0$, which proves that the binary system rotates relative to the planes $wx$ and $yz$, but has no rotation with respect to the planes $wy, wz, xy,$ and $xz$. This remark completes the proof.
\end{proof}

\subsection{Positive elliptic-elliptic Lagrangian relative equilibria}\label{pee-Lagr}

We further prove that the positive elliptic-elliptic Lagrangian orbits of the curved 3-body problem in $\mathbb S^3$ (i.e.\ equilateral triangles having two rotations, one with respect to the plane  $wx$ and the other relative to the plane $yz$) are necessarily relative equilibria, and cannot be rotopulsators. Moreover, these orbits have no rotations relative to the other base planes.

Consider three equal masses, $m_1=m_2=m_3=:m>0$, and a candidate solution of the form 
\begin{equation}\label{equiltr-pee}
\begin{gathered}
{\bf q}=({\bf q}_1,{\bf q}_2,{\bf q}_3), \ \ {\bf q}_i=(w_i,x_i,y_i,z_i), \ \ i=1,2,3,\\
w_1=r(t)\cos\alpha(t),\ x_1=r(t)\sin\alpha(t),\
y_1=\rho(t)\cos\beta(t),\ z_1=\rho(t)\sin\beta(t),\\
w_2=r(t)\cos[\alpha(t)+2\pi/3],\ x_2=r(t)\sin[\alpha(t)+2\pi/3],\\
y_2=\rho(t)\cos[\beta(t)+2\pi/3],\ z_2=\rho(t)\sin[\beta(t)+2\pi/3],\\
w_3=r(t)\cos[\alpha(t)+4\pi/3],\ x_3=r(t)\sin[\alpha(t)+4\pi/3],\\
y_3=\rho(t)\cos[\beta(t)+4\pi/3],\ z_3=\rho(t)\sin[\beta(t)+4\pi/3],
\end{gathered}
\end{equation}
with $\alpha$ and  $\beta$ nonconstant functions and $r^2+\rho^2=1$. We can now prove the following result.

\begin{proposition}
\label{prop-pee-L}
Consider the curved $3$-body problem in $\mathbb S^3$ given by system \eqref{positive} with $N=3$. Then every candidate solution of the form \eqref{equiltr-pee} is a positive elliptic-elliptic Lagrangian relative equilibrium,
which rotates relative to the planes $wx$ and $yz$, but has no rotation with respect to the planes $wy, wz, xy,$ and $xz$. 
\end{proposition}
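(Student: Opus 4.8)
My plan is to run the candidate \eqref{equiltr-pee} through Criterion \ref{pee-existence}; the ``relative equilibrium'' half of the statement then drops out of the mutual distances almost immediately. The phase constants are $a_1=b_1=0$, $a_2=b_2=2\pi/3$, $a_3=b_3=4\pi/3$, so for every $i\ne j$ we have $\cos(a_i-a_j)=\cos(b_i-b_j)=-\tfrac12$, and \eqref{epsilonij} gives
\[
q_{12}=q_{13}=q_{23}=-\tfrac12 r^2-\tfrac12\rho^2=-\tfrac12,
\]
a constant, since $r^2+\rho^2=1$. By the part of Criterion \ref{pee-existence} on relative equilibria, a candidate of the form \eqref{positive-elliptic-elliptic} whose $q_{ij}$ are all constant is a relative equilibrium and not a rotopulsator, which is the first claim of the proposition.

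It remains to confirm that such a candidate genuinely solves \eqref{positive} and to pin down its rotations. For existence I would check the three remaining conditions of Criterion \ref{pee-existence}. Since $M=3m$ and $\sum_i m_ir_i^2=3mr^2$, the rates are $\dot\alpha=c_1/(3mr^2)$ and $\dot\beta=c_2/(3m\rho^2)$. The $\ddot\alpha$- and $\ddot\beta$-equations of \eqref{r-pee-new-new} have vanishing right-hand sides because $\sum_{j\ne i}\sin(a_j-a_i)=\sum_{j\ne i}\sin(b_j-b_i)=0$ for three equally spaced angles, and with these rates they become $\tfrac{d}{dt}(r^2\dot\alpha)=0$ and $\tfrac{d}{dt}(\rho^2\dot\beta)=0$, hence hold identically. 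Finally, since all $r_i$ coincide and $a_i=b_i$, the bracket $r_j(1-r_i^2)\cos(a_i-a_j)-r_i(1-r_i^2)^{1/2}(1-r_j^2)^{1/2}\cos(b_i-b_j)$ in each $G_i$ vanishes identically, so the first two equations of \eqref{r-pee-new-new} collapse to the single second-order equation
\[
\ddot r=\frac{c_1^2(1-r^2)}{9m^2 r^3}-\frac{c_2^2 r}{9m^2(1-r^2)}-\frac{r\dot r^2}{1-r^2},
\]
to which standard existence and uniqueness theory for analytic ordinary differential equations applies. Every orbit so produced keeps $q_{12}\equiv-\tfrac12$, so all of them are positive elliptic-elliptic Lagrangian relative equilibria.

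For the rotations I would use Proposition \ref{integrals-pee}: one reads off $c_{wx}=c_1$ and $c_{yz}=c_2$, both nonzero because $\alpha$ and $\beta$ are nonconstant, and these give the rotations relative to the $wx$- and $yz$-planes. For $c_{wy},c_{wz},c_{xy},c_{xz}$ I would substitute the candidate into \eqref{cwy-pee}--\eqref{cxz-pee}; since $\{\,a_i+b_i\bmod 2\pi\,\}=\{0,2\pi/3,4\pi/3\}$, the terms carrying the argument $\alpha+\beta+a_i+b_i$ sum to zero over $i$, and each of the four components reduces to a multiple of $r\rho(\dot\alpha+\dot\beta)\sin(\alpha-\beta)-\frac{\dot r}{\rho}\cos(\alpha-\beta)$ or of the companion combination obtained by interchanging $\sin$ and $\cos$. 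I expect the main obstacle to be showing that this residual combination vanishes: the equally spaced phases only cancel the $\alpha+\beta$ frequency, so disposing of the $\alpha-\beta$ part requires the relations $r^2\dot\alpha=c_1/(3m)$, $\rho^2\dot\beta=c_2/(3m)$ together with the reduced $r$-equation and --- where the bare phasor cancellation is not enough --- a rotation of the frame adapted to the relative equilibrium, exactly the frame-dependence phenomenon recorded in the final remark of Section 6. That step, not the distance computation, is where the real work lies.
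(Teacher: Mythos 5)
Up to its last step your argument is exactly the paper's: the identity $q_{12}=q_{13}=q_{23}=-\tfrac12$ (hence the relative-equilibrium conclusion), the rates $\dot\alpha=c_1/(3mr^2)$ and $\dot\beta=c_2/(3m\rho^2)$, the identical satisfaction of the $\ddot\alpha$- and $\ddot\beta$-equations, and your reduced equation for $r$ --- which is precisely system \eqref{Lagrangian-ee} --- all appear there, as do $c_{wx}=c_1$ and $c_{yz}=c_2$.

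The step you flag as ``where the real work lies'' is a genuine gap, and you are right not to wave it through: in fact it cannot be closed as stated. Since $a_i=b_i$, the $\alpha-\beta$ terms in \eqref{cwy-pee}--\eqref{cxz-pee} all carry the same phase, and after the $\alpha+\beta$ phasors cancel one is left with
\begin{equation*}
c_{wy}=\tfrac{3m}{2}\Bigl[r\rho(\dot\alpha+\dot\beta)\sin(\alpha-\beta)-\tfrac{\dot r}{\rho}\cos(\alpha-\beta)\Bigr],\qquad
c_{wz}=\tfrac{3m}{2}\Bigl[r\rho(\dot\alpha+\dot\beta)\cos(\alpha-\beta)+\tfrac{\dot r}{\rho}\sin(\alpha-\beta)\Bigr],
\end{equation*}
with $c_{xz}=c_{wy}$ and $c_{xy}=-c_{wz}$. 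These are indeed constant along solutions, but
\begin{equation*}
c_{wy}^2+c_{wz}^2=\tfrac{9m^2}{4}\Bigl[\tfrac{\dot r^2}{\rho^2}+r^2\rho^2(\dot\alpha+\dot\beta)^2\Bigr],
\end{equation*}
which vanishes only if $\dot r\equiv0$ \emph{and} $\dot\alpha\equiv-\dot\beta$. A concrete instance: $r=\rho=1/\sqrt2$, $\alpha=\beta=\omega t$ is a fixed point of \eqref{Lagrangian-ee} with $c_1=c_2=3m\omega/2$ (and one checks directly that it solves \eqref{positive}, since ${\bf q}_1+{\bf q}_2+{\bf q}_3=0$ gives $\ddot{\bf q}_i=-\omega^2{\bf q}_i$); there $y_i=w_i$ and $z_i=x_i$, so $c_{wz}=c_{wx}=c_1\neq0$. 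Hence no manipulation of the reduced equations, energy relation, or phase sums will yield $c_{wy}=c_{wz}=c_{xy}=c_{xz}=0$ for every candidate of the form \eqref{equiltr-pee}; the vanishing holds only after the adapted rotation of the frame you allude to, or on the measure-zero set of initial data singled out above. The paper's own proof simply asserts that these four components vanish by appeal to Proposition \ref{integrals-pee}, so your hesitation identifies a real defect in that final claim rather than a shortcoming of your own computation; everything else in your proposal (the relative-equilibrium part, the existence via the reduced ODE, and $c_{wx}=c_1$, $c_{yz}=c_2$) is correct and coincides with the text.
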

\begin{proof} To prove this result, we first apply Criterion \ref{pee-existence} to a candidate solution of the form \eqref{equiltr-pee} for system \eqref{positive}. Straightforward computations show that 
$$
q_{12}=q_{13}=q_{23}=-1/2,
$$ 
which means that the sides of the equilateral triangle don't vary in time, so if this solution candidate proves to exist, then it is necessarily a positive elliptic-elliptic Lagrangian relative equilibrium. Notice further that
\begin{equation}
\label{1&2eq}
\dot\alpha=\frac{c_1}{3mr^2},\ \ \dot\beta=\frac{c_2}{3m(1-r^2)},
\end{equation}
the equations in system \eqref{r-pee-new-new} involving $\dot\alpha, \ddot\alpha$ and $\dot\beta, \ddot\beta$, respectively, are identically satisfied, and that system \eqref{r-pee-new-new} thus reduces to the family of first-order systems
\begin{equation}
\label{Lagrangian-ee}
\begin{cases}
\dot{r}=u,\cr
\dot{u}=\frac{c_1^2(1-r^2)}{9m^2r^3}-
\frac{r(9m^2u^2+c_2^2)}{9m^2(1-r^2)}.
\end{cases}
\end{equation} 

As in the proof of Proposition \ref{prop-pe-L}, the existence and uniqueness of analytic positive elliptic-elliptic rotopulsators for admissible initial conditions follows. From Proposition \ref{integrals-pee} we can conclude that $c_{wx}=c_1, c_{yz}=c_2$, with $c_1,c_2\ne 0$, and that $c_{wy}=c_{wz}=c_{xy}=c_{xz}=0$, which proves that the positive elliptic-elliptic Lagrangian relative equilibria rotate relative to the planes $wx$ and $yz$, but have no rotation with respect to the planes $wy, wz, xy,$ and $xz$. This remark completes the proof.
\end{proof}

\begin{remark}
As noted at the end of Section 4, since $r$ varies, the above relative equilibria cannot be generated from an
element of the underlying torus $SO(2)\times SO(2)$
of the Lie group $SO(4)$. Suitable rotations of the coordinate system, however, would make this possible, in which case $r$ would become constant for each specific solution. 
\end{remark}

To get some insight into the nature of these relative equilibria, let us first find the fixed points of system \eqref{Lagrangian-ee},
which occur for
$$
(c_1^2-c_2^2)r^4-2c_1^2r^2+c_1^2=0.
$$
For fixed values of $c_1$ and $c_2$, with $c_1\ne c_2$, there are at most two fixed points, $r_1=\big(\frac{c_1}{c_1-c_2}\big)^{1/2}$ and $r_2=\big(\frac{c_1}{c_1+c_2}\big)^{1/2}$, only one of which is between $0$ and $1$, whereas for $c_1=c_2$ there is a single fixed point, namely $r_0=1/\sqrt{2}$. So in both cases the fixed point is unique.

Numerical experiments suggest that the phase space picture of system \eqref{Lagrangian-ee} looks like in Fig.\eqref{Lagr-elliptic-elliptic}, with $r$ periodic, a fact in agreement with the Lie theory applied to the group $SO(4)$. Consequently, the positive elliptic-elliptic Lagrangian orbits are expected to be quasiperiodic, since the periods of $\alpha$ and $\beta$ differ, in general, except for a negligible set corresponding to periodic orbits.

\begin{figure}[htbp] 
   \centering
   \includegraphics[width=2in]{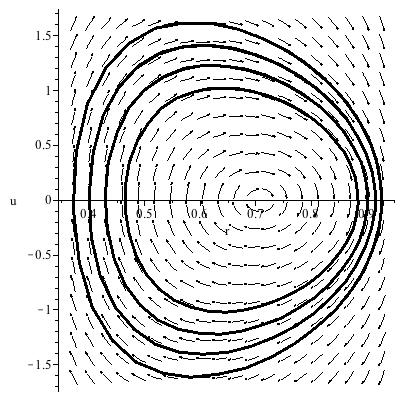}
   \caption{A typical flow of system \eqref{Lagrangian-ee} for $c_1=c_2=1$. For $c_1\ne c_2$, the flow looks qualitatively similar.}
   \label{Lagr-elliptic-elliptic}
\end{figure}

\begin{remark}
From relations \eqref{1&2eq} we can see how the angular velocities $\dot\alpha$ and $\dot\beta$ vary relative to $r$: when $r$ is close to 0, $|\dot\alpha|$ is large, while $|\dot\beta|$ is small, and the other way around when $r$ is close to 1. Moreover, the expressions of $\dot\alpha$ and $\dot\beta$ are consistent with relation \eqref{rel-alpha-beta}.
\end{remark}

\begin{remark}
If we initially assume the masses to be distinct, but the bodies to correspond to the same function $r$, it follows that the solutions don't exist, which means that the masses must be always equal. If we additionally take distinct functions $r_1, r_2, r_3$, then
$$
q_{ij}=-\frac{r_ir_j+(1-r_i^2)^{1/2}(1-r_j^2)^{1/2}}{2}=\frac{(r_i-r_j)^2+[(1-r_i^2)^{1/2}-(1-r_j^2)^{1/2}]^2-2}{4},
$$
which implies that the triangle is not necessarily equilateral, 
so the orbit may not be Lagrangian at all.
\end{remark}

\begin{remark}
The energy relation \eqref{energy-pee} takes the form
$$
h=\frac{3m\dot{r}^2}{2(1-r^2)}+\frac{1}{6m^2}\bigg[\frac{c_1^2}{r^2}+\frac{c_2^2}{1-r^2}\bigg]+\sqrt{3}m^2,
$$
so the energy constant, $h$, is always positive.
\end{remark}

\begin{remark}
When $\dot\alpha=\dot\beta$, we have $r^2=\frac{c_1}{c_1+c_2}$, which means that $r$ must be constant, if it exists, so 
$$
\dot\alpha=\dot\beta=\frac{c_1+c_2}{3m}.
$$
In this case the integrals \eqref{energy-pee}, \eqref{cwy-pee}, \eqref{cwz-pee}, \eqref{cxy-pee}, and \eqref{cxz-pee} are constant, as expected. 
\end{remark}


\section{Qualitative behaviour of rotopulsators in $\mathbb S^3$}

In this section we will prove a result that describes the qualitative behaviour of rotopulsators in $\mathbb S^3$.
For this purpose, we first briefly introduce an object familiar to geometric topologists and present some of its properties.


\subsection{Clifford tori} The 2-dimensional manifold defined by
\begin{equation}
\label{Cliff}
{\bf T}_{r\rho}:=\{(w,x,y,z)\in\mathbb R^4\ |\ r^2+\rho^2=1, \ \ 0\le \theta,\phi < 2\pi\},
\end{equation}
where $w=r\cos\theta, x=r\sin\theta, y=\rho\cos\phi$, and
$z=\rho\sin\phi$, with $r,\rho\ge 0$, is called a Clifford torus,
and it has zero Gaussian curvature.
Since the distance from $0$ to every point of ${\bf T}_{r\rho}$ is 1, it follows that Clifford tori are contained in $\mathbb S^3$.
When $r$ (and, consequently, $\rho$) takes all the values between 0 and 1, the family of Clifford tori such defined foliates $\mathbb S^3$. Each Clifford torus splits $\mathbb S^3$ into two solid tori and forms the boundary between them. The two solid tori are congruent only when $r=\rho=1/\sqrt{2}$.

We have previously shown that relative equilibria in $\mathbb S^3$ rotate on Clifford tori, \cite{Diacu3}, \cite{Diacu4}. We will next prove that, at every moment in time, a rotopulsator passes through a different Clifford torus of any given foliation of $\mathbb S^3$. In other words, rotopulsators cannot be generated by an element of any underlying subgroup $SO(2)\times SO(2)$ of the Lie group $SO(4)$.

\subsection{Geometry and dynamics of rotopulsators in $\mathbb S^3$}

We can now state and prove the following result, which describes the motion of the bodies relative to foliations of $\mathbb S^3$ with Clifford tori.

\begin{theorem}
Consider a positive elliptic or a positive elliptic-elliptic rotopulsator of the curved $N$-body problem in $\mathbb S^3$. Then, for any foliation $({\bf T}_{r\rho})_{0\le r,\rho\le 1}$ of $\mathbb S^3$ given by Clifford tori, it is impossible that the trajectory of each body is contained for all time in some Clifford torus. In other words, for any such foliation, there is at least one body whose trajectory intersects a continuum of Clifford tori.
\end{theorem}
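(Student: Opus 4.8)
The plan is to reduce both types to a single computation and then analyze it by harmonic analysis. First I would use Remark~\ref{def-nonstandard} to write an arbitrary positive elliptic or positive elliptic-elliptic rotopulsator in the unified form ${\bf q}_i(t)=R(\alpha(t),\beta(t))\,{\bf p}_i(t)$, where $R(\alpha,\beta)\in SO(4)$ is the block rotation by the angle $\alpha$ in the $wx$-plane and by $\beta$ in the $yz$-plane (with $\beta\equiv 0$ in the positive elliptic case), and ${\bf p}_i(t)=(r_i(t)\cos a_i,\,r_i(t)\sin a_i,\,\rho_i(t)\cos b_i,\,\rho_i(t)\sin b_i)$ with $r_i^2+\rho_i^2=1$ and $a_i,b_i$ constant. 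By Criteria~\ref{pe-existence} and~\ref{pee-existence} (see \eqref{al-pe}, \eqref{alpha-and-beta}, \eqref{rel-alpha-beta}) the functions $\alpha,\beta$ are nonconstant, so $\dot\alpha\ne0$ (and $\dot\beta\ne0$ in the elliptic-elliptic case), and they are linked to the weighted moment $\sum_j m_jr_j^2$. Next I would record the geometric reformulation: a foliation of $\mathbb S^3$ by Clifford tori as in \eqref{Cliff} is precisely the family of level sets on $\mathbb S^3$ of the map $v\mapsto|P_Vv|^2=\langle P_Vv,v\rangle$, where $V\subset\mathbb R^4$ is a $2$-plane and $P_V$ the orthogonal projection onto it; the leaf through $p$ is $\{v\in\mathbb S^3:|P_Vv|^2=|P_Vp|^2\}$, and it is the standard foliation exactly when $P_V$ projects onto the $wx$- or the $yz$-plane. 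Thus ``the trajectory of body $i$ stays in one Clifford torus'' is equivalent to $f_i(t):=|P_V{\bf q}_i(t)|^2$ being constant.

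The central step is to compute $f_i$. Because $R(\alpha,\beta)\in SO(4)$, one has $f_i(t)=\langle P_{R(\alpha,\beta)^{-1}V}\,{\bf p}_i(t),{\bf p}_i(t)\rangle$; decomposing the quadratic form $v\mapsto\langle P_Vv,v\rangle$ into its isotypic pieces under the torus $SO(2)\times SO(2)$ acting in the $wx$- and $yz$-planes (the pieces of weights $(0,0)$, $(\pm2,0)$, $(0,\pm2)$, $(\pm1,\mp1)$, $(\pm1,\pm1)$, which span the $10$-dimensional space of symmetric forms) and substituting $w_i+ix_i=r_ie^{i(\alpha+a_i)}$, $y_i+iz_i=\rho_ie^{i(\beta+b_i)}$ yields
\[
f_i(t)=Ar_i^2+B\rho_i^2+r_i\rho_i\big[M_1\cos(\alpha-\beta+\gamma_{1i})+M_2\cos(\alpha+\beta+\gamma_{2i})\big]+r_i^2M_3\cos(2\alpha+\gamma_{3i})+\rho_i^2M_4\cos(2\beta+\gamma_{4i}),
\]
where $A,B,M_1,\dots,M_4\ge0$ depend only on $V$, the phases $\gamma_{ki}$ differ from $V$-dependent constants only by the known $a_i,b_i$, and the foliation is standard if and only if $M_1=M_2=M_3=M_4=0$ (equivalently $\{A,B\}=\{0,1\}$).

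Now I would argue by contradiction, assuming every $f_i$ is constant. If the foliation is standard, each $f_i=\pm r_i^2+\mathrm{const}$ forces every $r_i$ (hence $\rho_i$) constant, whence $q_{ij}={\bf q}_i\cdot{\bf q}_j=r_ir_j\cos(a_i-a_j)+\rho_i\rho_j\cos(b_i-b_j)$ is constant for all $i,j$, so the orbit is a relative equilibrium, contrary to the definition of a rotopulsator. If the foliation is not standard, some $M_k\ne0$; since a rotopulsator is not a relative equilibrium, the same identity for $q_{ij}$ shows at least one $r_i$ is nonconstant, so $\sum_j m_jr_j^2$, and with it $\dot\alpha$ and $\dot\beta$, vary while staying nonzero. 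In the formula for $f_i$ each oscillatory summand is the product of a modulating factor in $[0,1]$ (a power of $r_i$, of $\rho_i$, or the product $r_i\rho_i$) with a genuinely oscillating cosine $\cos(k_1\alpha+k_2\beta+\gamma)$, $\alpha$ being strictly monotone; I would show these cannot add up to a constant for every $i$. When all $r_i$ are constant, $f_i$ is a trigonometric polynomial with frequencies among $\dot\alpha-\dot\beta,\dot\alpha+\dot\beta,2\dot\alpha,2\dot\beta$, and constancy for all $i$ — using that the bodies have, generically, distinct phases $a_i,b_i$ to separate equal-frequency terms and treating the resonant cases $\dot\alpha=\pm\dot\beta$ by hand — forces all $M_k=0$ unless the configuration degenerates onto a great circle or great $2$-sphere, where again $q_{ij}$ is constant and the orbit is a relative equilibrium. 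When some $r_i$ varies, the bound $0\le r_i\le1$ is incompatible with a relation of the type $r_i^2M_3\cos(2\alpha+\gamma_{3i})+\cdots\equiv\mathrm{const}$, $M_3\ne0$ (and likewise for the $\alpha\pm\beta$ terms, using $c_1/\dot\alpha+c_2/\dot\beta=M$), since the bounded modulating factor cannot compensate the cancellation of the remaining terms as $\cos(\cdot)$ runs through its zeros along the monotone curve $\alpha(t)$. Either alternative contradicts our standing assumptions, proving the theorem.

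I expect this last case — separating the bounded modulating factors from the oscillations and handling the frequency resonances — to be the only delicate point; everything else is geometric bookkeeping together with the elementary identity $q_{ij}=r_ir_j\cos(a_i-a_j)+\rho_i\rho_j\cos(b_i-b_j)$ that is built into the rotopulsator form.
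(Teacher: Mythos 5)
The kernel of your argument---the branch in which the foliation is the coordinate family \eqref{Cliff}---is exactly the paper's proof: confinement of body $i$ to a leaf ${\bf T}_{r^\sharp\rho^\sharp}$ forces $r_i$ (hence $\rho_i$) to be constant, the identity $q_{ij}=r_ir_j\cos(a_i-a_j)+\rho_i\rho_j\cos(b_i-b_j)$ then makes every mutual distance constant, so the orbit is a relative equilibrium, contradicting Definition \ref{def-positive-elliptic-elliptic}; the positive elliptic case is absorbed through Remark \ref{def-nonstandard} exactly as in the paper. For the foliation the theorem actually names, your proof is therefore complete and coincides with the published one.

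Where you go beyond the paper is in allowing an arbitrary $2$-plane $V$ and reducing everything to the constancy of $f_i(t)=|P_V{\bf q}_i(t)|^2$. The isotypic decomposition and the resulting trigonometric expression for $f_i$ are correct and would, if completed, prove a genuinely stronger statement (invariance of the conclusion under rotations of the foliation). But as written that part is a plan, not a proof. Concretely: (i) the claim that every foliation of $\mathbb S^3$ by Clifford tori is the level-set family of $v\mapsto|P_Vv|^2$ for a single $V$ is asserted rather than argued (it is plausible via the two singular circle leaves, but it needs saying); (ii) in the non-standard case with some $r_i$ nonconstant, the assertion that the bounded factors $r_i^2$, $\rho_i^2$, $r_i\rho_i$ ``cannot compensate'' the oscillation of $\cos(2\alpha+\gamma)$ and the mixed terms is precisely the point that requires an argument---a priori $r_i(t)$ is determined by the same dynamics and could be phase-locked to $\alpha$, so ruling this out needs more than the bound $0\le r_i\le1$; and (iii) the resonances $\dot\alpha=\pm\dot\beta$, which by \eqref{rel-alpha-beta} do occur and in fact force $\dot\alpha$ to be constant, are deferred ``by hand.'' None of this extra machinery is needed for the theorem as stated, and if you intend the stronger, rotation-invariant version you should either carry out the harmonic-analysis step in full or, more economically, note that a rotated foliation is the standard one in rotated coordinates and address why the rotopulsator ansatz \eqref{positive-elliptic-elliptic} need not survive that change of frame---which is the real obstacle the oscillatory terms are encoding.
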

\begin{proof}
Let us assume that there exists a foliation $({\bf T}_{r\rho})_{0\le r,\rho\le 1}$ of $\mathbb S^3$ with Clifford tori for which a solution of the form \eqref{positive-elliptic} or \eqref{positive-elliptic-elliptic} behaves such that the trajectory of each body is confined to a Clifford torus. We will prove that under this hypothesis such a solution must be a relative equilibrium.

Let us first prove this property for positve elliptic-elliptic rotopulsators, i.e.\ solutions of the form \eqref{positive-elliptic-elliptic}. If the body $m_i$, whose solution is described by 
$$
w_i=r_i\cos(\alpha+a_i), x_i=r_i\sin(\alpha+a_i), y_i=\rho_i\cos(\beta+b_i), z_i=\rho_i\sin(\beta+b_i),
$$
with $r_i^2+\rho_i^2=1$, is confined to the Clifford torus ${\bf T}_{r^\sharp\rho^\sharp}$, with $r^\sharp,\rho^\sharp$ constant, then
$r_i=r^\sharp$ and $\rho_i=\rho^\sharp$ are also constant. Similarly,  if the body $m_j$, whose solution is described by 
$$
w_j=r_i\cos(\alpha+a_j), x_j=r_i\sin(\alpha+a_j), y_i=\rho_j\cos(\beta+b_j), z_j=\rho_j\sin(\beta+b_j),
$$
with $r_j^2+\rho_j^2=1$, is confined to the Clifford torus ${\bf T}_{r^\flat\rho^\flat}$, with $r^\flat,\rho^\flat$ constant, then $r_j=r^\flat$ and $\rho_j=\rho^\flat$ are also constant. As a result, 
$$
q_{ij}=r^\sharp r^\flat\cos(a_i-a_j)+\rho^\sharp\rho^\flat\cos(b_i-b_j),
$$
which is constant. So all the mutual distances are constant, therefore the solution is a relative equilibrium. 

For positive elliptic rotopulsators, we can use Remark \ref{def-nonstandard}, and notice that they are positive elliptic-elliptic rotopulsators with $\beta\equiv 0$. But 
since $\beta$ does not occur anyway in the above expression of $q_{ij}$, it won't show up for $\beta\equiv 0$ either, so the mutual distances of such orbits are also constant.

Since the foliation of $\mathbb S^3$ with Clifford tori forms a continuum of surfaces, the last part of the theorem follows. This remark completes the proof.
\end{proof}

\section{Negative elliptic rotopulsators}

In this section we analyze the solutions given in Definition \ref{def-negative-elliptic}. We first prove a criterion for finding such solutions, then obtain the conservation laws, and finally discuss a particular class of examples, namely the negative elliptic Lagrangian rotopulsators of the 3-body problem in $\mathbb H^3$. 

\subsection{Criterion for negative elliptic rotopulsators or relative equilibria}

The following result provides necessary and sufficient conditions for the existence of negative elliptic rotopulsators or relative equilibria in $\mathbb H^3$.

\begin{criterion}\label{ne-existence}
A solution candidate of the form \eqref{negative-elliptic} is a positive elliptic rotopulsator for system \eqref{negative} if and only if
\begin{equation}
\dot\alpha=\frac{b}{\sum_{j=1}^Nm_jr_i^2}, 
\end{equation}
where $b\ne 0$ is a constant, there are at least two distinct indices $i,j\in\{1,2,\dots,N\}$ such that $q_{ij}$ is not constant, and the variables $y_i, z_i, r_i, \ i=1,2,\dots, N,$ satisfy the first-order system of\ \! $5N$ equations (with\ \! $N$ constraints: $r_i^2+y_i^2=z_i^2-1$, $i=1,2,\dots,N$), 
\begin{equation}
\label{sys-crit-ne}
\begin{cases}
\dot y_i=u_i\cr
\dot z_i=v_i\cr
\dot{u}_i=\sum_{\stackrel{j=1}{j\ne i}}^N\frac{m_j(y_j+q_{ij}y_i)}{(q_{ij}^2-1)^{3/2}}+H_i({\bf y},{\bf z}, u_i, v_i)y_i\cr
\ddot{z}_i=\sum_{\stackrel{j=1}{j\ne i}}^N\frac{m_j(z_j+q_{ij}z_i)}{(q_{ij}^2-1)^{3/2}}+H_i({\bf y},{\bf z}, u_i, v_i)z_i,\cr
r_i\ddot\alpha+2\dot{r}_i\dot\alpha=\sum_{\stackrel{j=1}{j\ne i}}^N
\frac{m_jr_j\sin(a_j-a_i)}{(\mu_{ij}^2-1)^{3/2}},
\end{cases}
\end{equation}
where ${\bf y}=(y_1,y_2,\dots,y_N),\ {\bf z}=(z_1,z_2,\dots,z_N)$,
\begin{equation}
H_i({\bf y},{\bf z}, u_i, v_i):=\frac{[(y_iv_i-z_iu_i)^2+v_i^2-u_i^2]}{z_i^2-y_i^2-1}+\frac{b^2(z_i^2-y_i^2-1)}{[\sum_{j=1}^Nm_j(z_j^2-y_j^2-1)]^2},
\end{equation}
$i=1,2,\dots, N,$ and, for any $i,j\in\{1,2,\dots,N\}$, 
$$
q_{ij}=(z_i^2-y_i^2-1)^{\frac{1}{2}}(z_j^2-y_j^2-1)^{\frac{1}{2}}\cos(a_i-a_j)+y_iy_j-z_iz_j.
$$
If the quantities $q_{ij}$ are constant for all distinct indices $i,j\in\{1,2,\dots,N\}$, then the solution is a relative equilibrium. If $q_{ij}=\pm 1$ for some distinct $i,j\in\{1,2,\dots,N\}$, then such solutions don't exist.
\end{criterion}
\begin{proof}
Consider a solution candidate of the form \eqref{negative-elliptic} subject to the above initial conditions. Then, for any $i,j\in\{1,2,\dots,N\}$, we obtain the above expression for $q_{ij}$
and, for any $i=1,2,\dots, N$, we find that 
$$
\dot q_{ij}=\frac{(y_i\dot{z}_i-z_i\dot{y}_i)^2+\dot{z}_i^2-\dot{y}_i^2+(z_i^2-y_i^2-1)^2\dot\alpha^2}{z_i^2-y_i^2-1}.
$$
For all $\ i=1,2,\dots, N$, each $r_i$ can be expressed in terms of $y_i$ and $z_i$ to obtain
$$
r_i=(z_i^2-y_i^2-1)^{\frac{1}{2}},\ \ \
\dot{r}_i=\frac{z_i\dot{z}_i-y_i\dot{y}_i}{(z_i^2-y_i^2-1)^{\frac{1}{2}}},
$$
$$
\ddot{r}_i=\frac{(z_i^2-y_i^2-1)(z_i\ddot{z}_i-y_i\ddot{y}_i)+\dot{y}_i^2-\dot{z}_i^2-(y_i\dot{z}_i-z_i\dot{y}_i)^2}{(z_i^2-y_i^2-1)^{\frac{3}{2}}}.
$$
Substituting a solution of the form \eqref{negative-elliptic} into system \eqref{negative} and employing the above formulas, we obtain for the equations corresponding to $\ddot{y}_i$ and $\ddot{z}_i$ that
\begin{equation}
\label{neg-ell-yi}
\ddot{y}_i=\sum_{\stackrel{j=1}{j\ne i}}^N\frac{m_j(y_j+q_{ij}y_i)}{(q_{ij}^2-1)^{\frac{3}{2}}}+\frac{[(y_i\dot{z}_i-z_i\dot{y}_i)^2+\dot{z}_i^2-\dot{y}_i^2]y_i}{z_i^2-y_i^2-1}+(z_i^2-y_i^2-1)y_i\dot\alpha^2,
\end{equation}
\begin{equation}
\label{neg-ell-zi}
\ddot{z}_i=\sum_{\stackrel{j=1}{j\ne i}}^N\frac{m_j(z_j+q_{ij}z_i)}{(q_{ij}^2-1)^{\frac{3}{2}}}+\frac{[(y_i\dot{z}_i-z_i\dot{y}_i)^2+\dot{z}_i^2-\dot{y}_i^2]z_i}{z_i^2-y_i^2-1}+(z_i^2-y_i^2-1)z_i\dot\alpha^2,
\end{equation}
whereas for the equations corresponding to $\ddot{w}_i$ and $\ddot{x}_i$, after some long computations that also use \eqref{neg-ell-yi} and \eqref{neg-ell-zi}, we are led either to identities or to the equations
\begin{equation}
\label{alpha-ne}
r_i\ddot\alpha+2\dot{r}_i\dot\alpha=\sum_{\stackrel{j=1}{j\ne i}}^N
\frac{m_jr_j\sin(a_j-a_i)}{(q_{ij}^2-1)^{\frac{3}{2}}}, \ \ i=1,2,\dots, N.
\end{equation} 
Then the same as in the proof of Criterion 1 we obtain that
$$
\dot\alpha=\frac{b}{\sum_{i=1}^Nm_ir_i^2}=\frac{b}{\sum_{i=1}^Nm_i(z_i^2-y_i^2-1)},
$$
where $b$ is an integration constant. Then, again as in the proof of Criterion 1, we can prove that
equations \eqref{neg-ell-yi}, \eqref{neg-ell-zi}, \eqref{alpha-ne} lead to the first order system \eqref{sys-crit-ne}. The part of the criterion related to relative equilibria follows directly from Definition \ref{def-negative-elliptic}. The nonexistence of such solutions if some $q_{ij}=\pm 1$ follows from the fact that at least a denominator cancels in the equations of motion. This remark completes the proof.
\end{proof}

\subsection{Conservation laws for negative elliptic rotopulsating orbits.}

In addition to Criterion \ref{ne-existence}, we would also like to obtain the conservation laws specific to negative elliptic rotopulsating orbits. They follow by straightforward computations using the above proof, the integral of energy, and the six integrals of the total angular momentum.

\begin{proposition}\label{integrals-ne}
If system \eqref{negative} has a solution of the form \eqref{negative-elliptic}, then the following expressions are constant:

--- energy,
\begin{equation}
\label{energy-ne}
\begin{split}
h=\sum_{i=1}^N\frac{m_i[(y_i\dot{z}_i-z_i\dot{y}_i)^2+\dot{z}_i^2-\dot{y}_i^2]}{2(z_i^2-y_i^2-1)}\hspace{1cm}\\
+\frac{b^2}{2\sum_{i=1}^Nm_i(z_i^2-y_i^2-1)}
+\sum_{1\le i<j\le N}\frac{m_im_jq_{ij}}{(q_{ij}^2-1)^{\frac{1}{2}}};
\end{split}
\end{equation}

--- total angular momentum relative to the $wx$-plane,
\begin{equation}
c_{wx}=b;
\end{equation}

--- total angular momentum relative to the $wy$-plane,
\begin{equation}
\label{angmom-wy-ne}
\begin{split}
c_{wy}=\sum_{i=1}^Nm_i\bigg[(z_i^2-y_i^2-1)^{\frac{1}{2}}\dot{y}_i+\frac{(y_i\dot{y}_i-z_i\dot{z}_i)y_i}{(z_i^2-y_i^2-1)^{\frac{1}{2}}}\bigg]\cos(\alpha+a_i)\\
+\frac{b}{\sum_{i=1}^Nm_i(z_i^2-y_i^2-1)}\sum_{i=1}^Nm_i(z_i^2-y_i^2-1)^{\frac{1}{2}}y_i\sin(\alpha+a_i);
\end{split}
\end{equation}

--- total angular momentum relative to the $wz$-plane,
\begin{equation}
\label{angmom-wz-ne}
\begin{split}
c_{wz}=\sum_{i=1}^Nm_i\bigg[(z_i^2-y_i^2-1)^{\frac{1}{2}}\dot{z}_i+\frac{(y_i\dot{y}_i-z_i\dot{z}_i)z_i}{(z_i^2-y_i^2-1)^{\frac{1}{2}}}\bigg]\cos(\alpha+a_i)\\
+\frac{b}{\sum_{i=1}^Nm_i(z_i^2-y_i^2-1)}\sum_{i=1}^Nm_i(z_i^2-y_i^2-1)^{\frac{1}{2}}z_i\sin(\alpha+a_i);
\end{split}
\end{equation}

--- total angular momentum relative to the $xy$-plane,
\begin{equation}
\label{angmom-xy-ne}
\begin{split}
c_{xy}=\sum_{i=1}^Nm_i\bigg[(z_i^2-y_i^2-1)^{\frac{1}{2}}\dot{y}_i+\frac{(y_i\dot{y}_i-z_i\dot{z}_i)y_i}{(z_i^2-y_i^2-1)^{\frac{1}{2}}}\bigg]\sin(\alpha+a_i)\\
-\frac{b}{\sum_{i=1}^Nm_i(z_i^2-y_i^2-1)}\sum_{i=1}^Nm_i(z_i^2-y_i^2-1)^{\frac{1}{2}}y_i\cos(\alpha+a_i);
\end{split}
\end{equation}

--- total angular momentum relative to the $xz$-plane,
\begin{equation}
\label{angmom-xz-ne}
\begin{split}
c_{xz}=\sum_{i=1}^Nm_i\bigg[(z_i^2-y_i^2-1)^{\frac{1}{2}}\dot{z}_i+\frac{(y_i\dot{y}_i-z_i\dot{z}_i)z_i}{(z_i^2-y_i^2-1)^{\frac{1}{2}}}\bigg]\sin(\alpha+a_i)\\
-\frac{b}{\sum_{i=1}^Nm_i(z_i^2-y_i^2-1)}\sum_{i=1}^Nm_i(z_i^2-y_i^2-1)^{\frac{1}{2}}z_i\cos(\alpha+a_i);
\end{split}
\end{equation}

--- total angular momentum relative to the $yz$-plane,
\begin{equation}
\label{angmom-yz-ne}
c_{yz}=0.
\end{equation}
\end{proposition}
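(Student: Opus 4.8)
The plan is to mirror the derivation of Proposition \ref{integrals-pe}: substitute the candidate \eqref{negative-elliptic} into the integral of energy $T-U=h$ and into the six components of the total angular momentum obtained in Section 5, and then simplify everything using the auxiliary identities already established in the proof of Criterion \ref{ne-existence}, namely $r_i=(z_i^2-y_i^2-1)^{1/2}$, $\dot r_i=(z_i\dot z_i-y_i\dot y_i)(z_i^2-y_i^2-1)^{-1/2}$, and $\dot\alpha=b\big[\sum_j m_j(z_j^2-y_j^2-1)\big]^{-1}$. Beyond these substitutions only routine trigonometric and algebraic manipulation is needed, so the bulk of the argument is computational; below I indicate where each stated formula comes from.

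For the energy I would use that in $\mathbb H^3$ one has $\sigma\mathbf q_i\cdot\mathbf q_i=1$, so $T=\tfrac12\sum_i m_i\,\dot{\mathbf q}_i\cdot\dot{\mathbf q}_i$. Writing the velocity in the ``polar'' variables of \eqref{negative-elliptic} gives $\dot{\mathbf q}_i\cdot\dot{\mathbf q}_i=\dot r_i^2+r_i^2\dot\alpha^2+\dot y_i^2-\dot z_i^2$; feeding in $\dot r_i$ and the constraint $r_i^2=z_i^2-y_i^2-1$, the three $y,z$-terms collapse to $[(y_i\dot z_i-z_i\dot y_i)^2+\dot z_i^2-\dot y_i^2](z_i^2-y_i^2-1)^{-1}$, which is the first sum in \eqref{energy-ne}, while $\tfrac12\sum_i m_i r_i^2\dot\alpha^2=\tfrac12\dot\alpha\big(\dot\alpha\sum_i m_i r_i^2\big)=b^2\big[2\sum_i m_i(z_i^2-y_i^2-1)\big]^{-1}$ gives the second; the term $-U$, rewritten through the (hyperbolic) cotangent of the mutual distances, supplies the last sum. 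For $c_{wx}$ the computation is immediate: $w_i\dot x_i-\dot w_i x_i=r_i^2\dot\alpha$, so $c_{wx}=\dot\alpha\sum_i m_i r_i^2=b$. For $c_{wy},c_{wz},c_{xy},c_{xz}$ one expands $w_i=r_i\cos(\alpha+a_i)$, $x_i=r_i\sin(\alpha+a_i)$ in the corresponding bilinear expressions, inserts the identities for $r_i,\dot r_i,\dot\alpha$, and collects the coefficients of $\cos(\alpha+a_i)$ and $\sin(\alpha+a_i)$; these match \eqref{angmom-wy-ne}--\eqref{angmom-xz-ne} verbatim. (Constancy of all seven quantities is of course automatic since they are known first integrals; what is being recorded is their explicit form for this class of solutions.)

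The one step that is not pure bookkeeping is the assertion $c_{yz}=0$ in \eqref{angmom-yz-ne}, since the $yz$-block of \eqref{negative-elliptic} carries no rotation angle and $c_{yz}=\sum_i m_i(y_i\dot z_i-\dot y_i z_i)$ is not visibly zero. I would argue it in two stages. First, subtracting $y_i$ times \eqref{neg-ell-zi} from $z_i$ times \eqref{neg-ell-yi} cancels the $H_i$-terms and the $\dot\alpha^2$-terms, leaving $z_i\ddot y_i-y_i\ddot z_i=\sum_{j\ne i}m_j(z_iy_j-y_iz_j)(q_{ij}^2-1)^{-3/2}$; multiplying by $m_i$, summing over $i$, and using the antisymmetry of the double sum under $i\leftrightarrow j$ shows $\tfrac{d}{dt}c_{yz}=0$. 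Second, to pin the value at $0$ rather than merely constant, I would invoke Remark \ref{def-nonstandard}: a negative elliptic rotopulsator is a negative elliptic--hyperbolic rotopulsator of the form \eqref{negative-elliptic-hyperbolic} with $\beta\equiv 0$, so $y_i=\rho_i\sinh b_i$ and $z_i=\rho_i\cosh b_i$ with $\rho_i=\rho_i(t)$ and $b_i$ constant; then each $y_i\dot z_i-\dot y_i z_i=\rho_i\dot\rho_i(\sinh b_i\cosh b_i-\cosh b_i\sinh b_i)=0$, hence $c_{yz}=0$. I expect reconciling the general representation \eqref{negative-elliptic} with this constrained form that makes the $yz$-angular momentum manifestly vanish to be the only place requiring genuine care; everything else is a direct, if lengthy, substitution into the integrals of Section 5.
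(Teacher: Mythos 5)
Your overall route --- substitute the ansatz \eqref{negative-elliptic} into $T-U=h$ and into the six angular-momentum integrals of Section 5, then simplify with $r_i=(z_i^2-y_i^2-1)^{1/2}$, $\dot r_i=(z_i\dot z_i-y_i\dot y_i)r_i^{-1}$ and the expression for $\dot\alpha$ --- is exactly what the paper intends (it offers nothing beyond the sentence that the laws ``follow by straightforward computations''), and your derivations of $h$, $c_{wx}$, $c_{wy}$, $c_{wz}$, $c_{xy}$, $c_{xz}$ are correct.

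The gap is in your argument for $c_{yz}=0$. The first stage (showing $\sum_i m_i(y_i\dot z_i-\dot y_iz_i)$ is constant from \eqref{neg-ell-yi}--\eqref{neg-ell-zi} and antisymmetry) is fine but redundant, since $c_{yz}$ is already one of the integrals of the full system. The second stage does not work as written: Remark \ref{def-nonstandard} says that the $\beta\equiv 0$ specialization of \eqref{negative-elliptic-hyperbolic} is a solution of the form \eqref{negative-elliptic}; it does not say that every solution of the form \eqref{negative-elliptic} can be written as $y_i=\rho_i(t)\sinh b_i$, $z_i=\rho_i(t)\cosh b_i$ with $b_i$ \emph{constant}. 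For general $y_i(t),z_i(t)$ subject only to $z_i^2-y_i^2=1+r_i^2$, the hyperbolic angle $\mathrm{artanh}(y_i/z_i)$ is a priori time-dependent, and its constancy is precisely what $c_{yz}=0$ is later used to establish: in the negative elliptic Lagrangian proposition the paper deduces $y=\gamma z$ \emph{from} $c_{yz}=0$, so your argument would make that step circular. (Note, moreover, that a negative elliptic-hyperbolic rotopulsator with $d_2\ne 0$ is literally of the form \eqref{negative-elliptic} and has $c_{yz}=-d_2\ne 0$, so some additional input --- the intended reading that a genuinely \emph{negative elliptic} solution carries no hyperbolic rotation in the $yz$-plane --- is unavoidable here; the paper asserts this in the remark following Definition \ref{def-negative-elliptic} rather than proving it.) Apart from this point, your proposal matches the paper.
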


\subsection{Negative elliptic Lagrangian rotopulsators}

We further provide a class of specific examples of negative elliptic rotopulsators of the curved 3-body problem, namely Lagrangian orbits in $\mathbb H^3$. These systems rotate relative to the plane $wx$, but have no rotations relative to the other base planes.

Consider three equal masses, $m_1=m_2=m_3=:m$, and a candidate solution of the form
\begin{equation}\label{equiltr-ne}
\begin{split}
{\bf q}=({\bf q}_1,{\bf q}_2,{\bf q}_3), \ \ {\bf q}_i=(w_i,x_i,y_i,z_i), \ \ i=1,2,3, \hspace{1.2cm}\\
w_1=r(t)\cos\alpha(t),\ x_1=r(t)\sin\alpha(t),\
y_1=y(t),\ z_1=z(t),\\
w_2=r(t)\cos[\alpha(t)+2\pi/3],\ x_2=r(t)\sin[\alpha(t)+2\pi/3],\hspace{0.5cm}\\
y_2=y(t),\ z_2=z(t),\hspace{3.3cm}\\
w_3=r(t)\cos[\alpha(t)+4\pi/3],\ x_3=r(t)\sin[\alpha(t)+4\pi/3],\hspace{0.5cm}\\
y_3=y(t),\ z_3=z(t).\hspace{3.3cm}
\end{split}
\end{equation}

With the help of Criterion \ref{ne-existence}, we can now show that, in general, these are indeed solutions of system \eqref{negative}.

\begin{proposition}
Consider the curved $3$-body problem in $\mathbb H^3$ given by system \eqref{negative}. Then, except for a negligible set of solutions formed by negative elliptic Lagrangian relative equilibria, every candidate solution of the form \eqref{equiltr-ne} is a negative elliptic Lagrangian rotopulsator, which rotates relative to the plane $wx$, but has no rotation with respect to the planes $wy, wz, xy, xz$, and $yz$.
\end{proposition}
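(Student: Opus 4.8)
The plan is to follow, step for step, the proof of Proposition~\ref{prop-pe-L}, with Criterion~\ref{ne-existence} and Proposition~\ref{integrals-ne} playing the roles of Criterion~\ref{pe-existence} and Proposition~\ref{integrals-pe}. First I would insert the candidate \eqref{equiltr-ne} into Criterion~\ref{ne-existence}: since the three differences $a_i-a_j$ all have cosine $-1/2$, a direct computation gives $q_{12}=q_{13}=q_{23}$, a single function of $y$ and $z$, together with $\dot\alpha=b/(3mr^2)$. Because $\sin(a_{j_1}-a_i)+\sin(a_{j_2}-a_i)=0$ for each $i$ and $r^2\dot\alpha$ is constant, the fifth equation of \eqref{sys-crit-ne} is an identity, and the equations for $\ddot y_i,\ \ddot z_i$ collapse onto a single pair which, after using the value of $\dot\alpha$, can be written in the form $\dot y=u$, $\dot z=v$, $\dot u=\Phi(y,z,u,v)\,y$, $\dot v=\Phi(y,z,u,v)\,z$ for one common scalar function $\Phi$ built from the attractive term and from $H_i$.

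From this system $\ddot y\,z-y\,\ddot z=0$, hence $y\dot z-z\dot y=k$ for some constant $k$. For the fully symmetric configuration \eqref{equiltr-ne} the integral \eqref{angmom-yz-ne} reduces to $c_{yz}=3m(y\dot z-z\dot y)=3mk$; since $c_{yz}=0$ we get $k=0$, and because $z\ge 1>0$ throughout, this forces $y(t)=\gamma z(t)$ with $\gamma$ a constant, the constraint then reading $r^2=z^2-y^2-1=(1-\gamma^2)z^2-1\ge 0$, so $|\gamma|<1$. Since moreover $\sum_{i=1}^{3}\cos(\alpha+a_i)=\sum_{i=1}^{3}\sin(\alpha+a_i)=0$, the expressions \eqref{angmom-wy-ne}, \eqref{angmom-wz-ne}, \eqref{angmom-xy-ne}, \eqref{angmom-xz-ne} all vanish, so $c_{wy}=c_{wz}=c_{xy}=c_{xz}=0$, while $c_{wx}=b\ne 0$; this establishes the claimed rotation pattern for every orbit in the family, whether rotopulsator or relative equilibrium.

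It then remains to reduce the dynamics and invoke the standard theory of analytic differential equations. Setting $\delta=1-\gamma^2\in(0,1]$ (so that $u=\gamma v$), substituting $y=\gamma z$, $u=\gamma v$, rescaling by $\bar z=\sqrt{\delta}\,z$, $\bar v=\sqrt{\delta}\,v$, and relabelling, the system becomes a one-parameter family of planar analytic systems $\dot z=v$, $\dot v=\Psi(z,v)\,z$ on the admissible region $z>1$, where now $r^2=z^2-1$ (the value $z=1$, i.e.\ $r=0$, being a total collision and hence excluded). Its equilibria are precisely the negative elliptic Lagrangian relative equilibria; they satisfy a polynomial equation in $z$, so for each fixed pair $(m,b)$ there are finitely many, whence the set of relative equilibria is negligible as $(m,b)$ vary. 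On the complement, the existence, uniqueness and analyticity theorem for ordinary differential equations provides, through every admissible initial condition, a unique analytic solution along which $z$, and therefore $r$ and $q_{12}$, are nonconstant; by Criterion~\ref{ne-existence} such an orbit is a negative elliptic Lagrangian rotopulsator.

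The main obstacle I expect is the bookkeeping in the first step: confirming that inserting the Lagrangian ansatz into Criterion~\ref{ne-existence} really yields the single common factor $\Phi$, with the Lorentzian signs handled correctly (the $+q_{ij}$ in the attractive part, the $+H_i$, and the denominators $z_i^2-y_i^2-1$), and checking that \eqref{angmom-yz-ne} genuinely reduces to $3m(y\dot z-z\dot y)$ for this configuration so that $c_{yz}=0$ does force $k=0$. One should also verify that the inequalities $z\ge 1$ and $\delta>0$ persist through the substitution and rescaling, and that the reduced planar field $\Psi$ is analytic and free of singularities on $z>1$, so that the existence--uniqueness argument runs exactly as in the proof of Proposition~\ref{prop-pe-L}.
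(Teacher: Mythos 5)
Your proposal reproduces the paper's argument essentially step for step: same use of Criterion~\ref{ne-existence} to get $q_{12}=q_{13}=q_{23}$ and $\dot\alpha=b/(3mr^2)$, same deduction of $y\dot z-z\dot y=k$ and $k=0$ from $c_{yz}=0$, same vanishing of the other angular momenta from the trigonometric sums, same substitution $y=\gamma z$ with rescaling to a planar analytic system whose finitely many fixed points are the relative equilibria, and the same appeal to standard existence--uniqueness theory. The only cosmetic difference is that the paper additionally uses the energy integral to eliminate the velocity from the reduced vector field before locating the fixed points, which you omit but which is not needed for the conclusion.
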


\begin{proof}
Let us consider a candidate solution of the form \eqref{equiltr-ne}. Then, using Criterion \ref{ne-existence}, straightforward computations show that
$$
q_{12}=q_{13}=q_{23}=\frac{3y^2-3z^2+1}{2},\ \  \dot\alpha=\frac{b}{3mr^2},
$$
the equations of system \eqref{sys-crit-ne} involving $\dot\alpha, \ddot\alpha$ are identically satisfied, and the variables $y$ and $z$ satisfy the system
\begin{equation}
\label{ddotxy-ne}
\begin{cases}
\dot y = u\cr
\dot z = v\cr
\dot{u} = G(y,z,u,v)y\cr
\dot{v} = G(y,z,u,v)z,
\end{cases}
\end{equation}
where
$$
G(y,z,u,v)=\frac{(yv-uz)^2+v^2-u^2}{z^2-y^2-1}
+\frac{b^2}{9m^2(z^2-y^2-1)}
$$
$$
-\frac{8m}{\sqrt{3}(z^2-y^2-1)^{\frac{1}{2}}(3z^2-3y^2+1)^{\frac{3}{2}}}.
$$
From \eqref{ddotxy-ne}, we can conclude that $\ddot y z=y\ddot z$, which implies that 
$$
y\dot z-z\dot y=k\ {\rm (constant)}.
$$
But, since from \eqref{angmom-yz-ne} we have that $3m(y\dot z-z\dot y)=c_{yz}$, it follows that $k=c_{yz}/3m$.

Notice that the energy relation \eqref{energy-ne} takes the form
$$
\frac{3m[(y\dot z-\dot yz)^2+\dot z^2-\dot y^2]}{2(z^2-y^2-1)}+\frac{b^2}{6m(z^2-y^2-1)}+\frac{\sqrt{3}m^2(3y^2-3z^2+1)}{(z^2-y^2-1)^{\frac{1}{2}}(3z^2-3y^2+1)^{\frac{1}{2}}}=h,
$$
which implies that $G$ can be written as
$$
G(y,z)=\frac{2h}{3m}-\frac{2m[5-9(y^2-z^2)^2]}{\sqrt{3}(z^2-y^2-1)^{\frac{1}{2}}(3z^2-3y^2+1)^{\frac{3}{2}}}.
$$
Since 
$$
\sin\alpha+\sin(\alpha+2\pi/3)+\sin(\alpha+4\pi/3)=\cos\alpha+\cos(\alpha+2\pi/3)+\cos(\alpha+4\pi/3)=0,
$$
it follows from \eqref{angmom-wy-ne}, \eqref{angmom-wz-ne}, \eqref{angmom-xy-ne}, and \eqref{angmom-xz-ne} that $c_{wy}=c_{wz}=c_{xy}=c_{xz}=0$, so the triangle has no rotation relative to the planes $wy, wz, xy$, and $xz$. Since there is no rotation relative to the plane $yz$ either, i.e.\ $c_{yz}=0$, we have $k=0$, so $y\dot z-z\dot y=0$, and if we assume that $z$ does not take zero values, we can conclude that $\frac{d}{dt}\frac{y}{z}=0$, so $y(t)=\gamma z(t)$, where $\gamma$ is a constant. Let us now denote $\epsilon=1-\gamma^2$. Notice that since a point $(w,x,y,z)$ on $\mathbb H^3$ satisfies the equation $w^2+x^2+y^2-z^2=-1$ and $z\ge 1$, we necessarily have that $\epsilon\ge 0$. If we further substitute $y$ for $\gamma z$, make the change of variable $\bar z=\sqrt{\epsilon} z, \bar v=\sqrt{\epsilon} v$, and redenote the variables $\bar z, \bar v$ by $z,v$, respectively, system \eqref{ddotxy-ne} reduces to the family of first-order systems
\begin{equation}
\label{Lagr-ne}
\begin{cases}
\dot z=u\cr
\dot u=\bigg[\frac{2h}{3m}-\frac{2m(5-9z^4)}{\sqrt{3}(z^2-1)^{1/2}(3z^2+1)^{3/2}}\bigg]z.\cr
\end{cases}
\end{equation}

Standard results of the theory of ordinary differential equations can now be applied to system \eqref{Lagr-ne} to prove the existence and uniqueness of analytic negative elliptic Lagrangian rotopulsators, for admissible initial conditions. To show that, except for a negligible set, they are all rotopulsators, we identify the relative equilibria, which are fixed points of system \eqref{Lagr-ne}.

One fixed point of the vector field in \eqref{Lagr-ne} is obviously $(z,u)=(0,0)$, but it lies outside the domain $z\ge 1$. The other fixed points, if any, must be of the form $(z,0)$, where the positive values of $z$ are given by the roots of the polynomial
$$
Q(z)=27(h^2-9m^4)z^4-18(h^2-15m^4)z^2-8h^2z-h^2-75m^4.
$$
By Descartes's rule of signs, we must distinguish between two cases:

(i) $|h|<\sqrt{15}m^2$, when $Q$ has no positive roots at all;

(ii) $|h|\ge \sqrt{15}m^2$, when $Q$ has exactly one positive root, which is larger than 1 because $Q(1)=-78m^4$ and $Q(z)\to\infty$ when $z\to\infty$. 

Therefore the set of relative equilibria, when the parameters $m, h$ vary, is negligible, so all the other solutions are negative elliptic Lagrangian rotopulsators. This remark completes the proof.
\end{proof}


\section{Negative hyperbolic rotopulsators}

In this section we analyze the solutions given in Definition \ref{def-negative-hyperbolic}. We first prove a criterion for finding such orbits, then provide the conservation laws, and finally analyze two particular classes of examples. First we prove the existence of negative hyperbolic Eulerian rotopulsators of the 2-body problem in $\mathbb H^3$, i.e.\ orbits for which the bodies move on geodesic that rotates hyperbolically. Then we show that for the 3-body problem in $\mathbb H^3$, all Eulerian orbits are relative equilibria, so there are no rotopulsators of this type.

\subsection{Criterion for negative hyperbolic rotopulsators or relative equilibria}

The following result provides necessary and sufficient conditions for the existence of negative hyperbolic rotopulsators or relative equilibria in $\mathbb H^3$.

\begin{criterion}\label{nh-existence}
A solution candidate of the form \eqref{negative-hyperbolic} is a negative hyperbolic rotopulsator for system \eqref{negative} if and only if 
\begin{equation}\label{bet-nh}
\dot\beta=\frac{c}{\sum_{j=1}^Nm_j\rho_j^2}, 
\end{equation}
where $c\ne 0$ is a constant, there are at least two distinct indices $i,j\in\{1,2,\dots,N\}$ such that $q_{ij}$ is not constant, and the variables $w_i, x_i, \rho_i,  i=1,2,\dots, N,$ satisfy the first-order system of\ \! $5N$ equations (with\ \! $N$ constraints: $w_i^2+x_i^2=\rho_i^2-1$, $i=1,2,\dots,N$),
\begin{equation}
\begin{cases}
\label{crit-nh}
\dot w_i=p_i\cr
\dot x_i=s_i\cr
\dot{p}_i=\sum_{\stackrel{j=1}{j\ne i}}^N\frac{m_j(w_j+q_{ij}w_i)}{(q_{ij}^2-1)^{3/2}}+K_i({\bf w},{\bf x},p_i,s_i)w_i\cr
\dot{s}_i=\sum_{\stackrel{j=1}{j\ne i}}^N\frac{m_j(x_j+q_{ij}x_i)}{(q_{ij}^2-1)^{3/2}}+K_i({\bf w},{\bf x},p_i,s_i)x_i,\cr
\rho_i\ddot\beta+2\dot{\rho}_i\dot\beta=\sum_{\stackrel{j=1}{j\ne i}}^N
\frac{m_j\rho_j\sinh(b_j-b_i)}{(q_{ij}^2-1)^{3/2}}, 
\end{cases}
\end{equation}
where ${\bf w}=(w_1,w_2,\dots, w_N), {\bf x}=(x_1, x_2,\dots, x_N)$,
\begin{equation}
K_i({\bf w},{\bf x},p_i,s_i):=\frac{(w_is_i-x_ip_i)^2+p_i^2+s_i^2}{w_i^2+x_i^2+1}+\frac{c^2(w_i^2+x_i^2+1)}{[\sum_{j=1}^Nm_j(w_j^2+x_j^2+1)]^2},
\end{equation}
$i=1,2,\dots, N,$ and, for any $i,j\in\{1,2,\dots,N\}$,
\begin{equation}
\label{nuij}
q_{ij}=w_iw_j+x_ix_j-(w_i^2+x_i^2+1)^{\frac{1}{2}}(w_j^2+x_j^2+1)^{\frac{1}{2}}\cosh(b_i-b_j).
\end{equation}
If the quantities $q_{ij}$ are constant for all distinct indices $i,j\in\{1,2,\dots,N\}$, then the solution is a relative equilibrium. If $q_{ij}=\pm 1$ for some distinct $i,j\in\{1,2,\dots,N\}$, then such solutions don't exist.
\end{criterion}
\begin{proof}
Consider a solution candidate of the form \eqref{negative-hyperbolic} subject to the above initial conditions. Then, for any $i,j\in\{1,2,\dots,N\}$, we obtain the above expression for $q_{ij}$
and, for any $i=1,2,\dots, N$, we find that 
$$
\dot q_{ij}=\frac{(w_i\dot{x}_i-x_i\dot{w}_i)^2+\dot{w}_i^2+\dot{x}_i^2+(w_i^2+x_i^2+1)^2\dot\beta^2}{w_i^2+x_i^2+1}.
$$
For all $\ i=1,2,\dots, N$, each $\rho_i$ can be expressed in terms of $y_i$ and $z_i$ to obtain
$$
\rho_i=(w_i^2+x_i^2+1)^{\frac{1}{2}},\ \ \
\dot{\rho}_i=\frac{w_i\dot{w}_i+x_i\dot{x}_i}{(w_i^2+x_i^2+1)^{\frac{1}{2}}},
$$
$$
\ddot{\rho}_i=\frac{(w_i^2+x_i^2+1)(w_i\ddot{w}_i+x_i\ddot{x}_i)+\dot{w}_i^2+\dot{x}_i^2+(w_i\dot{x}_i-x_i\dot{w}_i)^2}{(w_i^2+x_i^2+1)^{\frac{3}{2}}}.
$$
Substituting a solution of the form \eqref{negative-hyperbolic} into system \eqref{negative} and employing the above formulas, we obtain for the equations corresponding to $\ddot{w}_i$ and $\ddot{x}_i$ that
\begin{equation}
\label{neg-hyp-wi}
\ddot{w}_i=\sum_{\stackrel{j=1}{j\ne i}}^N\frac{m_j(w_j+q_{ij}w_i)}{(q_{ij}^2-1)^{\frac{3}{2}}}+\frac{[(w_i\dot{x}_i-x_i\dot{w}_i)^2+\dot{w}_i^2+\dot{x}_i^2]w_i}{w_i^2+x_i^2+1}+(w_i^2+x_i^2+1)w_i\dot\beta^2,
\end{equation}
\begin{equation}
\label{neg-hyp-xi}
\ddot{x}_i=\sum_{\stackrel{j=1}{j\ne i}}^N\frac{m_j(x_j+q_{ij}x_i)}{(q_{ij}^2-1)^{\frac{3}{2}}}+\frac{[(w_i\dot{x}_i-x_i\dot{w}_i)^2+\dot{w}_i^2+\dot{x}_i^2]x_i}{w_i^2+x_i^2+1}+(w_i^2+x_i^2+1)x_i\dot\beta^2,
\end{equation}
whereas for the equations corresponding to $\ddot{y}_i$ and $\ddot{z}_i$, after some long computations that also use \eqref{neg-hyp-wi} and \eqref{neg-hyp-xi}, we are led to the equations
\begin{equation}
\label{beta-nh}
\rho_i\ddot\beta+2\dot{\rho}_i\dot\beta=\sum_{\stackrel{j=1}{j\ne i}}^N
\frac{m_j\rho_j\sinh(b_j-b_i)}{(q_{ij}^2-1)^{\frac{3}{2}}}, \ \ i=1,2,\dots, N.
\end{equation} 
As in previous criteria, we can conclude that
$$
\dot\beta=\frac{c}{\sum_{i=1}^Nm_i\rho_i^2}=\frac{c}{\sum_{i=1}^Nm_i(w_i^2+x_i^2+1)},
$$
where $c$ is an integration constant. Consequently,
equations \eqref{neg-hyp-wi} and \eqref{neg-hyp-xi} become
\begin{equation}
\label{neg-hyp-wi-new}
\ddot{w}_i=\sum_{\stackrel{j=1}{j\ne i}}^N\frac{m_j(w_j+\nu_{ij}w_i)}{(\nu_{ij}^2-1)^{\frac{3}{2}}}+\frac{[(w_i\dot{x}_i-x_i\dot{w}_i)^2+\dot{w}_i^2+\dot{x}_i^2]w_i}{w_i^2+x_i^2+1}+\frac{a^2(w_i^2+x_i^2+1)w_i}{[\sum_{j=1}^Nm_j(w_j^2+x_j^2+1)]^2},
\end{equation}
\begin{equation}
\label{neg-hyp-xi-new}
\ddot{x}_i=\sum_{\stackrel{j=1}{j\ne i}}^N\frac{m_j(x_j+\nu_{ij}x_i)}{(\nu_{ij}^2-1)^{\frac{3}{2}}}+\frac{[(w_i\dot{x}_i-x_i\dot{w}_i)^2+\dot{w}_i^2+\dot{x}_i^2]x_i}{w_i^2+x_i^2+1}+\frac{a^2(w_i^2+x_i^2+1)x_i}{[\sum_{j=1}^Nm_j(w_j^2+x_j^2+1)]^2}, 
\end{equation}
$i=1,2,\dots, N$. A straightforward computations shows now that equations \eqref{beta-nh}, \eqref{neg-hyp-wi-new}, and \eqref{neg-hyp-xi-new} lead to system \eqref{crit-nh}. The part of the criterion related to relative equilibria follows directly from Definition \ref{def-negative-hyperbolic}. The nonexistence of such solutions if some $q_{ij}=\pm 1$ follows from the fact that at least a denominator cancels in the equations of motion. This remark completes the proof.
\end{proof}

\subsection{Conservation laws for negative hyperbolic rotopulsators}

In addition to Criterion \ref{nh-existence}, we would also like to obtain the conservation laws specific to negative elliptic rotopulsators. They follow by straightforward computations using the above proof, the integral of energy, and the six integrals of the total angular momentum.

\begin{proposition}\label{integrals-nh}
If system \eqref{negative} has a solution of the form \eqref{negative-hyperbolic}, then
the following expressions are constant:

--- energy,
\begin{equation}
\label{energy-nh}
\begin{split}
h=\sum_{i=1}^N\frac{m_i[(w_i\dot{x}_i-x_i\dot{w}_i)^2+\dot{w}_i^2+\dot{x}_i^2]}{2(w_i^2+x_i^2+1)}\hspace{1.5cm}\\+\frac{a^2}{2\sum_{j=1}^Nm_j(w_j^2+x_j^2+1)}+\sum_{1\le i<j\le N}
\frac{m_im_jq_{ij}}{(q_{ij}^2-1)^{1/2}};
\end{split}
\end{equation}

--- total angular momentum relative to the $wx$-plane,
\begin{equation}
\label{angmom-wx-nh}
c_{wx}=0;
\end{equation}

--- total angular momentum relative to the $wy$-plane,
\begin{equation}
\label{angmom-wy-nh}
\begin{split}
c_{wy}=\sum_{i=1}^N\frac{m_i[x_i(w_i\dot x_i-x_i\dot w_i)-\dot w_i]}{(w_i^2+x_i^2+1)^{\frac{1}{2}}}\sinh(\beta+b_i)\hspace{1.2cm}\\
+\frac{a}{\sum_{j=1}^Nm_j(w_j^2+x_j^2+1)}\sum_{i=1}^Nm_iw_i(w_i^2+x_i^2+1)^{\frac{1}{2}}\cosh(\beta+b_i);
\end{split}
\end{equation}

--- total angular momentum relative to the $wz$-plane,
\begin{equation}
\label{angmom-wz-nh}
\begin{split}
c_{wz}=\sum_{i=1}^N\frac{m_i[x_i(w_i\dot x_i-x_i\dot w_i)-\dot w_i]}{(w_i^2+x_i^2+1)^{\frac{1}{2}}}\cosh(\beta+b_i)\hspace{1.2cm}\\
+\frac{a}{\sum_{j=1}^Nm_j(w_j^2+x_j^2+1)}\sum_{i=1}^Nm_iw_i(w_i^2+x_i^2+1)^{\frac{1}{2}}\sinh(\beta+b_i);
\end{split}
\end{equation}

--- total angular momentum relative to the $xy$-plane,
\begin{equation}
\label{angmom-xy-nh}
\begin{split}
c_{xy}=\sum_{i=1}^N\frac{m_i[w_i(\dot w_ix_i-w_i\dot x_i)-\dot x_i]}{(w_i^2+x_i^2+1)^{\frac{1}{2}}}\sinh(\beta+b_i)\hspace{1.2cm}\\
+\frac{a}{\sum_{j=1}^Nm_j(w_j^2+x_j^2+1)}\sum_{i=1}^Nm_ix_i(w_i^2+x_i^2+1)^{\frac{1}{2}}\cosh(\beta+b_i);
\end{split}
\end{equation}

--- total angular momentum relative to the $xz$-plane,
\begin{equation}
\label{angmom-xz-nh}
\begin{split}
c_{xz}=\sum_{i=1}^N\frac{m_i[w_i(\dot w_ix_i-w_i\dot x_i)-\dot x_i]}{(w_i^2+x_i^2+1)^{\frac{1}{2}}}\cosh(\beta+b_i)\hspace{1.2cm}\\
+\frac{a}{\sum_{j=1}^Nm_j(w_j^2+x_j^2+1)}\sum_{i=1}^Nm_ix_i(w_i^2+x_i^2+1)^{\frac{1}{2}}\sinh(\beta+b_i).
\end{split}
\end{equation}
\end{proposition}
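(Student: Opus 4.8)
The plan is to substitute the parametrization \eqref{negative-hyperbolic} directly into the seven first integrals of Section 5 — the energy $T-U=h$ and the six angular‑momentum components — and to reduce everything with the bookkeeping already set up in the proof of Criterion \ref{nh-existence}: the relations $\rho_i=(w_i^2+x_i^2+1)^{1/2}$ and $\dot\rho_i=(w_i\dot w_i+x_i\dot x_i)(w_i^2+x_i^2+1)^{-1/2}$, together with $\dot\beta=c\big/\sum_j m_j\rho_j^2=c\big/\sum_j m_j(w_j^2+x_j^2+1)$, where $c$ is the constant from \eqref{bet-nh} (written $a$ in the statement). Two elementary identities carry the computation: the hyperbolic identity $\dot y_i^2-\dot z_i^2=\rho_i^2\dot\beta^2-\dot\rho_i^2$, obtained by differentiating $y_i=\rho_i\sinh(\beta+b_i)$, $z_i=\rho_i\cosh(\beta+b_i)$; and the Lagrange identity $(\dot w_i^2+\dot x_i^2)(w_i^2+x_i^2+1)=(w_i\dot x_i-x_i\dot w_i)^2+\dot w_i^2+\dot x_i^2+(w_i\dot w_i+x_i\dot x_i)^2$.

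For the energy, $\sigma=-1$ and ${\bf q}_i\cdot{\bf q}_i=-1$ give $T=\tfrac12\sum_i m_i(\dot w_i^2+\dot x_i^2+\dot y_i^2-\dot z_i^2)$; the hyperbolic identity rewrites the bracket as $(\dot w_i^2+\dot x_i^2-\dot\rho_i^2)+\rho_i^2\dot\beta^2$, the Lagrange identity turns the first parenthesis into $[(w_i\dot x_i-x_i\dot w_i)^2+\dot w_i^2+\dot x_i^2]/(w_i^2+x_i^2+1)$, and $\rho_i^2\dot\beta^2=c^2(w_i^2+x_i^2+1)\big/[\sum_j m_j(w_j^2+x_j^2+1)]^2$ sums over $i$ to $c^2\big/\sum_j m_j(w_j^2+x_j^2+1)$; substituting $q_{ij}$ from \eqref{nuij} into the force‑function part contributes $\sum_{i<j}m_im_jq_{ij}(q_{ij}^2-1)^{-1/2}$ to $h=T-U$ with no further manipulation, so adding the pieces gives \eqref{energy-nh}. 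For $c_{wy},c_{wz},c_{xy},c_{xz}$ one substitutes $y_i,z_i$ and their derivatives into the corresponding sums $\sum_i m_i(w_i\dot y_i-\dot w_i y_i)$, etc.; each of these splits into a term carrying $\sinh(\beta+b_i)$ or $\cosh(\beta+b_i)$ with coefficient $w_i\dot\rho_i-\dot w_i\rho_i=[x_i(w_i\dot x_i-x_i\dot w_i)-\dot w_i](w_i^2+x_i^2+1)^{-1/2}$ (and its $x$‑analogue for $c_{xy},c_{xz}$) and a term proportional to $\dot\beta$, whereupon inserting the formulas for $\dot\rho_i$ and $\dot\beta$ reproduces \eqref{angmom-wy-nh}–\eqref{angmom-xz-nh} verbatim.

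The one point that is not pure substitution is $c_{wx}=0$. Here $c_{wx}=\sum_i m_i(w_i\dot x_i-x_i\dot w_i)$; differentiating and inserting the equations for $\ddot w_i,\ddot x_i$ from \eqref{crit-nh} makes the $K_i$‑terms cancel and leaves $\sum_{i\ne j}m_im_j(w_ix_j-x_iw_j)(q_{ij}^2-1)^{-3/2}$, which vanishes by antisymmetry, so $c_{wx}$ is a first integral; its value is forced to be $0$ because a negative hyperbolic rotopulsator carries no elliptic rotation in the $wx$‑plane, which is the content of the remark following Definition \ref{def-negative-hyperbolic}. I expect this identification of the $wx$‑angular momentum with $0$ — rather than with an arbitrary integration constant — to be the only genuine subtlety; everything else is a bounded, if tedious, trigonometric–hyperbolic computation entirely parallel to the proofs of Propositions \ref{integrals-pe} and \ref{integrals-ne}.
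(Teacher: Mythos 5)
Your overall strategy --- substituting the parametrization \eqref{negative-hyperbolic} into the energy integral and the six angular-momentum integrals of Section 5 and simplifying via $\rho_i=(w_i^2+x_i^2+1)^{1/2}$ and \eqref{bet-nh} --- is exactly what the paper does (its ``proof'' is the single sentence that these laws follow by straightforward computations), and the computational parts of your proposal check out: the identity $\dot y_i^2-\dot z_i^2=\rho_i^2\dot\beta^2-\dot\rho_i^2$, the Lagrange identity, and the coefficient $w_i\dot\rho_i-\dot w_i\rho_i=\big[x_i(w_i\dot x_i-x_i\dot w_i)-\dot w_i\big](w_i^2+x_i^2+1)^{-1/2}$ (with its $x$-analogue) do reproduce \eqref{energy-nh} and \eqref{angmom-wy-nh}--\eqref{angmom-xz-nh}, with $a$ in the statement playing the role of the constant $c$ of \eqref{bet-nh}.

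The gap is exactly at the point you single out, \eqref{angmom-wx-nh}. Differentiating $\sum_i m_i(w_i\dot x_i-x_i\dot w_i)$ and using \eqref{crit-nh} only re-proves that $c_{wx}$ is a first integral, which is already guaranteed by the angular-momentum integrals of the full system; it says nothing about its \emph{value}. To get $c_{wx}=0$ you then appeal to the remark following Definition \ref{def-negative-hyperbolic}, but that remark runs the implication the other way: it asserts that the absence of an elliptic rotation in the $wx$-plane ``follows from \eqref{conditions-nh} and the corresponding integral of the angular momentum,'' i.e.\ from $c_{wx}=0$. Invoking it to conclude $c_{wx}=0$ is therefore circular. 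Nor can a substitution argument close this gap: the ansatz \eqref{negative-hyperbolic} constrains $w_i(t),x_i(t)$ only through $w_i^2+x_i^2=\rho_i^2-1$, and, for example, any solution of the elliptic-hyperbolic form \eqref{negative-elliptic-hyperbolic} is formally of the shape \eqref{negative-hyperbolic} while having $c_{wx}=d_1\ne 0$. So $c_{wx}=0$ is an extra condition implicitly packaged into what the authors mean by a \emph{purely} hyperbolic rotopulsator rather than a consequence of the stated form; the paper does not prove it either. Your write-up should either verify $c_{wx}=0$ directly for the specific class of solutions being considered (as is done, e.g., in the Eulerian examples, where $w_i$ and $x_i$ are proportional) or record it as a hypothesis, but it cannot be derived from the remark.
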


--- total angular momentum relative to the $yz$-plane,
\begin{equation}
c_{yz}=-c,
\end{equation}
where $c$ is the constant in the expression of $\dot\beta$ in \eqref{bet-nh}.

\subsection{Negative hyperbolic Eulerian rotopulsators for $N=2$}

Our first example is that of a class of negative hyperbolic rotopulsators for the 2-body problem in $\mathbb H^3$. We call them Eulerian since the bodies are for all time on a geodesic that rotates hyperbolically. 

Consider two bodies of masses $m_1=m_2=:m>0$ and a candidate solution of the form
\begin{equation}\label{sol-2b-nh}
\begin{gathered}
{\bf q}=({\bf q}_1,{\bf q}_2), \ \ {\bf q}_i=(w_i,x_i,y_i,z_i), \ \ i=1,2,\\
w_1=w(t),\ \ x_1=x(t), \ \ y_1=\rho(t)\sinh\beta(t),\ \ z_1=\rho(t)\cosh\beta(t),\\
w_2=-w(t),\ \ x_2=-x(t), \ \ y_2=\rho(t)\sinh\beta(t),\ \ z_2=\rho(t)\cosh\beta(t),\\
\end{gathered}
\end{equation}
with $\beta$ a nonconstant function and $w^2+x^2-\rho^2=-1$. With the help of Criterion \ref{nh-existence}, we can now show that, in general, these are indeed solutions of system \eqref{negative} for $N=2$.

\begin{proposition}
Consider the curved $2$-body problem in $\mathbb H^3$ given by system \eqref{negative} with $N=2$. Then, except for a negligible set of negative hyperbolic Eulerian relative equilibria, every candidate solution of the form \eqref{sol-2b-nh} is a negative hyperbolic Eulerian rotopulsator, which rotates relative to the plane $yz$, but has no rotations with respect to the planes $wx, wy, wz, xy,$ and $xz$.
\end{proposition}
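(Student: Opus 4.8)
The plan is to feed the candidate \eqref{sol-2b-nh} into Criterion~\ref{nh-existence}, following the same route used for the negative elliptic Lagrangian rotopulsators and for the positive elliptic-elliptic binary in $\mathbb S^3$. First I would record the elementary identities forced by the ansatz: the two bodies share the functions $\rho$ and $\beta$ (so $b_1=b_2$) and have antipodal $wx$-components, hence \eqref{nuij} together with the constraint $w^2+x^2=\rho^2-1$ gives $q_{12}=-(w^2+x^2)-\rho^2=1-2\rho^2$, which automatically satisfies $q_{12}\le-1$; so the orbit is admissible and lies in the domain of the equations, and $q_{12}$ is nonconstant precisely when $\rho$ is. Equation \eqref{bet-nh} gives $\dot\beta=c/(2m\rho^2)$ with $c\ne0$, so $\rho^2\dot\beta$ is constant and the last equation of \eqref{crit-nh} reduces to $\rho^{-1}\frac{d}{dt}(\rho^2\dot\beta)=0$, i.e.\ it holds identically (its right-hand side vanishes because $b_1=b_2$). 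I would then substitute \eqref{sol-2b-nh} into \eqref{negative}; the remaining $4N$ first-order equations of the criterion should collapse — both bodies yielding the same system — to the planar second-order system $\ddot w=(K-\frac{m}{4\rho(\rho^2-1)^{3/2}})\,w$, $\ddot x=(K-\frac{m}{4\rho(\rho^2-1)^{3/2}})\,x$, where $\rho^2=w^2+x^2+1$ and $K=\frac{(w\dot x-x\dot w)^2+\dot w^2+\dot x^2}{\rho^2}+\frac{c^2}{4m^2\rho^2}$.

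Next I would use these two equations to get $x\ddot w=w\ddot x$, so that $w\dot x-x\dot w$ is constant; comparing with \eqref{angmom-wx-nh} and the fact that for this configuration $c_{wx}=2m(w\dot x-x\dot w)$, that constant must be $0$. Hence $w\dot x-x\dot w\equiv0$, so either $w=\gamma x$ with $\gamma$ a constant (wherever $x\ne0$) or one of $w,x$ vanishes identically — the case $w\equiv x\equiv0$ being excluded, since it would give $q_{12}=-1$. In all cases the term $(w\dot x-x\dot w)^2$ disappears from $K$. I would also check, using the antipodal symmetry $({\bf q}_2)_{w,x}=-({\bf q}_1)_{w,x}$ in the explicit formulas \eqref{angmom-wy-nh}--\eqref{angmom-xz-nh}, that the two bodies' contributions cancel termwise, so that $c_{wy}=c_{wz}=c_{xy}=c_{xz}=0$, whereas $c_{yz}=-c\ne0$; this establishes the statement about the rotations.

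Finally, after substituting $w=\gamma x$ and rescaling $x\mapsto(\gamma^2+1)^{1/2}x$ (and $\dot x$ correspondingly), the factor $\gamma^2+1$ cancels, and the problem reduces to a one-parameter family of planar first-order systems of the form $\dot x=s$, $\dot s=[\,\cdots\,]\,x$ with $\rho^2=x^2+1$. I would then invoke the standard existence, uniqueness, and analyticity theorems for ODEs to conclude that solutions of the form \eqref{sol-2b-nh} exist, are unique, and are analytic for nonsingular initial data. The relative equilibria among them are exactly the fixed points of these planar systems; for fixed $m>0$ and $c\ne0$ the fixed-point condition is a single algebraic equation in $x$ and so has finitely many roots, whence the relative-equilibrium orbits form a negligible set as the parameters vary, and every other orbit has $\rho$ — and therefore $q_{12}=1-2\rho^2$ — nonconstant, i.e.\ is a genuine negative hyperbolic Eulerian rotopulsator.

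The only substantive computation is the reduction in the first paragraph, which is mechanical once Criterion~\ref{nh-existence} is available; the points that genuinely require care are verifying that the $\beta$-equation of \eqref{crit-nh} is automatically satisfied rather than forcing a relation on $\rho$, treating the degenerate sub-cases $x\equiv0$ and $w\equiv0$ separately, and — exactly as in Proposition~\ref{prop-pe-L} — making precise the sense in which the relative equilibria form a negligible set.
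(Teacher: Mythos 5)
Your proposal is correct and follows essentially the same route as the paper: apply Criterion~\ref{nh-existence} to get $q_{12}$ and $\dot\beta$, use the vanishing of $c_{wx}$ to force $w\dot x-x\dot w\equiv 0$ and hence $w=\gamma x$, reduce to a planar first-order system whose fixed points are the (negligible set of) relative equilibria, invoke standard ODE existence/uniqueness, and read off the rotation claims from Proposition~\ref{integrals-nh}. The only differences are cosmetic and in your favour: you treat the degenerate cases $w\equiv 0$ or $x\equiv 0$ explicitly, and your attraction coefficient $m/[4\rho(\rho^2-1)^{3/2}]$ carries the correct exponent, whereas the paper's displayed $Z^*$ has $(w^2+x^2)$ where $(w^2+x^2)^{3/2}$ should appear.
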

\begin{proof}
Consider a candidate solution of the form \eqref{sol-2b-nh}. Then from Criterion \ref{nh-existence} we can conclude that
$$
q_{12}=-2w^2-2x^2-1,\ \dot\beta=\frac{c}{2m(w^2+x^2+1)^2},
$$
and that a candidate solution of the above form leads to the system
\begin{equation}\label{2-body-nh-eq}
\begin{cases}
\dot w = p\cr
\dot x = s\cr
\dot p = Z^*(w,x,p,s)w\cr
\dot s = Z^*(w,x,p,s)x\cr
\rho\ddot\beta+2\dot\rho\dot\beta=0,
\end{cases}
\end{equation}
where
\begin{equation*}
\begin{gathered}
Z^*(w,x,p,s)=\frac{(ws-xp)^2+p^2+s^2}{w^2+x^2+1}+\frac{c^2}{4m^2(w^2+x^2+1)}\\-\frac{m}{4(w^2+x^2)(w^2+x^2+1)^{1/2}}.
\end{gathered}
\end{equation*}
The third and fourth equations lead to the
conclusion that $w=\zeta x$, where $\zeta$ is a constant. 
Since, by Proposition \ref{integrals-nh}, $c_{wx}=0$, it follows that $ws-xp=0$. If we substitute $w$ for $\zeta x$ and make the change of variables $\bar x=\delta x, \bar s =\delta s$, where $\delta=\zeta^2+1$, and redenote the variables $\bar w,\bar x$ by $w,x$, respectively, system \eqref{2-body-nh-eq} reduces to
\begin{equation}\label{2b-ne-simple}
\begin{cases}
\dot x = s\cr
\dot s = Z(x,s)x,
\end{cases}
\end{equation}
where
$$
Z(x,s) = \frac{s^2}{x^2+1}+\frac{c^2}{4m^2(x^2+1)}-\frac{m}{4x^2(x^2+1)^{1/2}}.
$$
This reduction can be done because, in agreement with the expression given for $\dot\beta$ in Criterion 4, the last equation in \eqref{2-body-nh-eq} can be solved and the constants chosen such that 
$$
\dot\beta = \frac{c}{2m(x^2+1)}.
$$
For given $m>0, c\ne 0$, and  $x>0$, system \eqref{2b-ne-simple} has the single fixed
point
$$
(x,s)=\bigg(\bigg[\frac{m^6+m^3(m^6+4c^4)^{1/2}}{\sqrt{2}c^2}\bigg]^{1/2},0 \bigg),
$$
which produces a relative equilibrium, so when $m$ and $c$ vary the set of relative equilibria is negligible. Standard existence and uniqueness results of the theory of differential equations lead now to the desired conclusion.
It also follows from Proposition \ref{integrals-nh} that 
$c_{yz}\ne 0$, whereas $c_{wy}=c_{wz}=c_{xy}=c_{xz}=0$,
which implies that the binary system rotates relative to the plane $yz$, but has no rotation with respect to the other base planes.
\end{proof}

\subsection{Negative hyperbolic Eulerian relative equilibria for $N=3$}

We further provide a class of specific negative hyperbolic relative equilibria of the curved 3-body problem that follow from Criterion 4, namely Eulerian orbits in $\mathbb H^3$. These systems rotate relative to the plane $yz$, but have no rotations with respect to the planes $wx, wy, wz, xy$, and $xz$.

Consider three equal masses, $m_1=m_2=m_3=:m$, and a candidate solution of the form
\begin{equation}
\label{Eulerian-nh}
\begin{gathered}
{\bf q}=({\bf q}_1,{\bf q}_2,{\bf q}_3), \ \ {\bf q}_i=(w_i,x_i,y_i,z_i), \ \ i=1,2,3,\\
w_1=0,\ \ x_1=0, \ \ y_1=\sinh\beta(t),\ \ z_1=\cosh\beta(t),\\
w_2=w(t),\ \ x_2=x(t), \ \ y_2=\rho(t)\sinh\beta(t),\ \ z_2=\rho(t)\cosh\beta(t),\\
w_3=-w(t),\ \ x_3=-x(t), \ \ y_3=\rho(t)\sinh\beta(t),\ \ z_3=\rho(t)\cosh\beta(t),
\end{gathered}
\end{equation}
with $\beta$ a nonconstant function and $w^2+x^2-\rho^2=-1$.
With the help of Criterion \ref{nh-existence}, we can now show that these are always negative hyperbolic Eulearian relative equilibria of system \eqref{negative} for $N=3$.

\begin{proposition}
Consider the curved $3$-body problem in $\mathbb H^3$ given by system \eqref{negative} for $N=3$. Then every candidate solution of the form \eqref{Eulerian-nh} is a negative hyperbolic Eulerian relative equilibrium, which rotates relative to the plane $yz$, but has no rotations with respect to the planes $wx, wy, wz, xy$, and $xz$.
\end{proposition}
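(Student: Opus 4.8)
\section{Proof proposal for the final Proposition}

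The plan is to follow the template of the preceding Eulerian and Lagrangian propositions: substitute the candidate \eqref{Eulerian-nh} into Criterion \ref{nh-existence} and extract what the resulting equations force. First I would record that, with the Lorentz product and the constraint $w^2+x^2-\rho^2=-1$, one gets $q_{12}=q_{13}=-\rho$ and $q_{23}=1-2\rho^2$, and that ${\bf q}_2+{\bf q}_3=2\rho\,{\bf q}_1$, so the three position vectors span a $2$-plane through the origin and the bodies are indeed collinear on a geodesic (which is what ``Eulerian'' means here). Since $\rho_1\equiv 1$ and $\rho_2=\rho_3=\rho$, we have $\sum_j m_j\rho_j^2=m(1+2\rho^2)$, hence $\dot\beta=\frac{c}{m(1+2\rho^2)}$; note also that $q_{12}=q_{13}=-\rho<-1$ and $q_{23}=1-2\rho^2<-1$ whenever $\rho>1$, so the denominators in \eqref{crit-nh} are well defined precisely when the bodies do not collide (the degenerate case $w=x=0$ forces $\rho=1$, which merges $m_1$ with $m_2$ and $m_3$ and is therefore excluded).

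The decisive step is the last equation of \eqref{crit-nh} written for the body $m_1$ pinned to the $yz$-geodesic. There $\rho_1\equiv 1$, $\dot\rho_1\equiv 0$, and, because every $b_i$ in \eqref{Eulerian-nh} vanishes, the right-hand side $\sum_{j\ne 1}\frac{m_j\rho_j\sinh(b_j-b_1)}{(q_{1j}^2-1)^{3/2}}$ is identically zero, so the equation collapses to $\ddot\beta=0$. Hence $\dot\beta$ is constant, and since $c\ne 0$ the relation $\dot\beta=\frac{c}{m(1+2\rho^2)}$ forces $\rho$ to be constant; consequently $q_{12},q_{13},q_{23}$ are all constant and, by Definition \ref{def-negative-hyperbolic}, the orbit is a relative equilibrium rather than a genuine rotopulsator. (This is exactly what separates the $N=3$ case from the $N=2$ case of the previous subsection, where no body is pinned to $\rho=1$.) I would then invoke the $\dot p_i,\dot s_i$ equations for $m_2$ and $m_3$: the sign symmetry $(w_3,x_3)=-(w_2,x_2)$, together with $q_{12}=q_{13}$, makes both pairs of the form $\dot p=\Psi w,\ \dot s=\Psi x$, so $w\ddot x-x\ddot w=0$ and $w\dot x-x\dot w$ is constant; combining this with $c_{wx}=0$ from Proposition \ref{integrals-nh} gives $w\dot x-x\dot w=0$, whence $w=\zeta x$ for a constant $\zeta$ (the case $x\equiv 0$ being handled symmetrically), and then $\rho^2=(\zeta^2+1)x^2+1$ being constant forces $x$, and hence $w$, to be constant too.

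With $w,x,\rho$ all constant the equations for $m_2,m_3$ reduce to a single algebraic relation $\Psi=0$ of the schematic shape $\frac{c^2\rho^2}{m^2(1+2\rho^2)^2}=\frac{m(4\rho^2+1)}{4\rho(\rho^2-1)^{3/2}}$, whose right-hand side is positive for every $\rho>1$; thus candidate solutions of the form \eqref{Eulerian-nh} exist for a continuum of admissible parameters, and every one of them is a relative equilibrium. Finally, for the rotation claims I would plug the constant configuration into Proposition \ref{integrals-nh}: $c_{wx}=0$ is automatic, $c_{yz}=-c\ne0$ yields the hyperbolic rotation relative to the $yz$-plane, and in $c_{wy},c_{wz},c_{xy},c_{xz}$ the body $m_1$ contributes nothing because $w_1=x_1=0$ while the contributions of $m_2$ and $m_3$ cancel in pairs by the sign symmetry, so those four momenta vanish.

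The main obstacle — really the only nonroutine point — is noticing that the $\rho_1$-equation of \eqref{crit-nh} degenerates to $\ddot\beta=0$ and drawing from it the conclusion that $\rho$ is constant; everything after that is the same elimination already used in the earlier propositions. Minor care is needed to treat the degenerate sub-cases ($x\equiv0$, and the excluded collision $\rho=1$) so that the statement covers literally every admissible candidate of the form \eqref{Eulerian-nh}.
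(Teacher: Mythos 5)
Your proof is correct and follows essentially the same route as the paper: apply Criterion \ref{nh-existence}, observe that the $\beta$-equation for the body pinned at $w_1=x_1=0$ degenerates to $\ddot\beta=0$ and hence forces $\rho$ (and then $w,x$) to be constant, so every such candidate is a relative equilibrium, with the rotation claims read off from Proposition \ref{integrals-nh}. Your computed values $q_{12}=q_{13}=-\rho$ and $q_{23}=1-2\rho^2$ are in fact the correct ones (the paper's stated $q_{12}=\rho$ and $q_{23}=-2\rho^2$ contain sign and constant slips), and your replacement of the paper's residual two-dimensional ODE by an explicit algebraic balance condition is a clean tightening rather than a different method.
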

\begin{proof}
Consider a candidate solution of the form \eqref{Eulerian-nh}. Then, using Criterion \ref{nh-existence}, straightforward computations show that
$$
q_{12}=q_{13}=(w^2+x^2+1)^{\frac{1}{2}}=\rho,\ \ q_{23}=-2(w^2+x^2+1)=-2\rho^2,$$
$$
\dot\beta=\frac{a}{m(2w^2+2x^2+3)}=\frac{a}{m(2\rho^2+1)},
$$
and the equations of motion reduce to the system
\begin{equation}
\label{syst-nh}
\begin{cases}
\dot w = p\cr
\dot x = s\cr
\dot p=J^*(w,x,p,s)w\cr
\dot s=J^*(w,x,p,s)x\cr
\ddot\beta=0\cr
\dot\rho\dot\beta=0,
\end{cases}
\end{equation}
where
$$
J^*(w,x,p,s)=\frac{m(w^2+x^2+1)^{1/2}}{(w^2+x^2-1)^{3/2}}-
\frac{m}{(2w^2+2x^2+1)^{3/2}(2w^2+2x^2+3)^{1/2}}
$$
$$
+\frac{(ws-xp)^2+p^2+s^2}{w^2+x^2+1}+\frac{a^2(w^2+x^2+1)}{m^2(2w^2+2x^2+3)^2}.
$$
Both of the last two equations in system \eqref{syst-nh} imply that $\rho$ is constant, which means that any solution, if it exists, must be a relative equilibrium.

As in the previous example, we can conclude that
$w=\zeta x$, where $\zeta$ is a constant, and that 
$ws-xp=0$ since $c_{wx}=0$. Moreover, $c_{wy}=c_{wz}=c_{xy}=c_{xz}=0$, so the orbit has no rotation relative to the planes $wy, wz, xy$, and $xz$.

If we substitute $w$ for $\zeta x$ and make the change of variables $\bar x=\delta x, \bar s =\delta s$, where $\delta=\zeta^2+1$, and redenote the variables $\bar w,\bar x$ by $w,x$, respectively, system \eqref{syst-nh} reduces to
\begin{equation}\label{sys-E-nh}
\begin{cases}
\dot x = s\cr
\dot s = J(x,s)x,
\end{cases}
\end{equation}
where
$$
J(x,s)=\frac{m(x^2+1)^{1/2}}{(x^2-1)^{1/2}}
-\frac{m}{(2x^2+1)^{3/2}(2x^2+3)^{1/2}}+\frac{s^2}{x^2+1}+\frac{a^2(x^2+1)}{m^2(2x^2+3)^2}.
$$

Standard results of the theory of differential equations can now be applied to system \eqref{sys-E-nh} to prove the existence and uniqueness of analytic negative hyperbolic Eulerian relative equilibria, for admissible initial conditions.
Proposition \ref{integrals-nh} shows that $c_{yz}=-c\ne 0$, whereas the computations lead to the conclusion that $c_{wy}=c_{wz}=c_{xy}=c_{xz}=0$, which implies that the system rotates relative to the plane $yz$, but has no rotation with respect to the planes $wx, wy, wz, xy$, and $xz$. This remark completes the proof.
 \end{proof}

\section{Negative elliptic-hyperbolic rotopulsators}

In this section we analyze the solutions given in Definition \ref{def-negative-elliptic-hyperbolic}. We first prove a criterion for finding such orbits, then provide the conservation laws, and finally show that two particular classes of candidate rotopulsators, namely the negative elliptic-hyperbolic Eulerian orbits of the 2- and 3-body problem in $\mathbb H^3$, are entirely formed by relative equilibria, so no rotopulsators of these types occur. 

\subsection{Criterion for negative elliptic-hyperbolic rotopulsators or relative equilibria}

The following result provides necessary and sufficient conditions for the existence of negative elliptic-hyperbolic rotopulsators or relative equilibria in $\mathbb H^3$.

\begin{criterion}\label{neh-existence}
A solution candidate of the form \eqref{negative-elliptic-hyperbolic} is a negative elliptic-hyperbolic rotopulsator for system \eqref{negative} if and only if
\begin{equation}
\label{alpha-and-beta-neh}
\dot\alpha=\frac{d_1}{\sum_{i=1}^Nm_ir_i^2}, \ \ \ \dot\beta=\frac{d_2}{M+\sum_{i=1}^Nm_ir_i^2},
\end{equation}
with $d_1,d_2\ne 0$ constants, there are at least two distinct indices $i,j\in\{1,2,\dots,N\}$ such that $q_{ij}$ is not constant, and the variables $r_1, r_2,\dots, r_N$ satisfy the first-order system of\ \! $4N$ equations,
\begin{equation}
\label{r-neh-new-new}
\begin{cases}
\dot r_i=s_i\cr
\dot{s}_i=L_i({\bf r}, s_i)\cr
r_i\ddot\alpha+2\dot{r}_i\dot\alpha=\sum_{\stackrel{j=1}{j\ne i}}^N
\frac{m_jr_j\sin(a_j-a_i)}{(q_{ij}^2-1)^{3/2}}\cr
\rho_i\ddot\beta+2\dot{\rho}_i\dot\beta=\sum_{\stackrel{j=1}{j\ne i}}^N
\frac{m_j\rho_j\sinh(b_j-b_i)}{(q_{ij}^2-1)^{3/2}},
\end{cases}
\end{equation}
where $r_i^2-\rho_i^2=-1,\ i=1,2,\dots,N$, ${\bf r}=(r_1,r_2,\dots, r_N)$,
\begin{equation}
\begin{split}
L_i({\bf r}, s_i)=r_i(1+r_i^2)\bigg[\frac{d_1^2}{(\sum_{i=1}^Nm_ir_i^2)^2}-\frac{d_2^2}{(M+\sum_{i=1}^Nm_ir_i^2)^2}\bigg]+\frac{r_is_i^2}{1+r_i^2}\hspace{0.3cm}\\
+\sum_{\stackrel{j=1}{j\ne i}}^N\frac{m_j[r_j(1+r_i^2)\cos(a_i-a_j)-r_i(1+r_i^2)^{1/2}(1+r_j^2)^{1/2}\cosh(b_i-b_j)]}{(q_{ij}^2-1)^{3/2}},
\end{split}
\end{equation}
and  for any $i,j\in\{1,2,\dots,N\}$ with $i\ne j$,
$$
q_{ij}=r_ir_j\cos(a_i-a_j)-(1+r_i^2)^{1/2}(1+r_j^2)^{1/2}\cosh(b_i-b_j).
$$
If the quantities $q_{ij}$ are constant for all distinct indices $i,j\in\{1,2,\dots,N\}$, then the solution is a relative equilibrium. If $q_{ij}=\pm 1$ for some distinct $i,j\in\{1,2,\dots,N\}$, then such solutions don't exist.
\end{criterion}
\begin{proof}
Consider a candidate solution of the form \eqref{negative-elliptic-hyperbolic} for system \eqref{negative}. By expressing each $\rho_i$ in terms of $r_i$, $i=1,2,\dots,N$, we obtain that
$$
\rho_i=(1+r_i^2)^{1/2},\ \ \ \dot\rho_i=\frac{r_i\dot r_i}{(1+r_i^2)^{1/2}},\ \ \
\ddot\rho_i=\frac{\dot{r}_i^2+r_i(1+r_i^2)\ddot{r}_i}{(1+r_i^2)^{3/2}},
$$
$q_{ij}$ takes the above form, and
$$
\dot q_{ij}=\dot{r}_i^2+r_i^2\dot\alpha^2-\frac{r_i^2\dot{r}_i^2}{1+r_i^2}-(1+r_i^2)\dot\beta^2.
$$

Substituting a solution candidate of the form \eqref{negative-elliptic-hyperbolic} into system \eqref{negative} and using the above formulas, we obtain for the equations corresponding to $\ddot{w}_i$ and  $\ddot{x}_i$ the equations 
\begin{equation}
\label{r-neh}
\begin{split}
\ddot{r}_i=r_i(1+r_i^2)(\dot\alpha^2-\dot\beta^2)+\frac{r_i\dot{r}_i^2}{1-r_i^2}\hspace{3.5cm}\\
+\sum_{\stackrel{j=1}{j\ne i}}^N\frac{m_j[r_j(1+r_i^2)\cos(a_i-a_j)-r_i(1+r_i^2)^{1/2}(1+r_j^2)^{1/2}\cosh(b_i-b_j)]}{(q_{ij}^2-1)^{3/2}},
\end{split}
\end{equation}
\begin{equation}
\label{alpha-neh}
r_i\ddot\alpha+2\dot{r}_i\dot\alpha=\sum_{\stackrel{j=1}{j\ne i}}^N
\frac{m_jr_j\sin(a_j-a_i)}{(q_{ij}^2-1)^{3/2}}, \ \ i=1,2,\dots, N,
\end{equation} 
whereas for the equations corresponding to $\ddot{y}_i, \ddot{z}_i$, we find equations \eqref{r-neh} again as well as the equations
\begin{equation}
\label{beta-neh}
\rho_i\ddot\beta+2\dot{\rho}_i\dot\beta=\sum_{\stackrel{j=1}{j\ne i}}^N
\frac{m_j\rho_j\sinh(b_j-b_i)}{(q_{ij}^2-1)^{3/2}}, \ \ i=1,2,\dots, N.
\end{equation}
As in Criterion 2, we obtain
$$
\dot\alpha=\frac{d_1}{\sum_{i=1}^Nm_ir_i^2},\ \ \dot\beta=\frac{d_2}{M+\sum_{i=1}^Nm_ir_i^2},
$$
where $d_1, d_2$ are integration constants, so \eqref{r-neh-new-new} is a first-order system of\ \! $4N$ equations.
The part of the statement related to relative equilibria follows directly from Definition \ref{def-negative-elliptic-hyperbolic}. The nonexistence of such solutions if some $q_{ij}=\pm 1$ follows from the fact that at least a denominator cancels in the equations of motion. This remark  completes the proof.
\end{proof}

\begin{remark}
From \eqref{alpha-and-beta-neh}, we can conclude that $\dot\alpha$ and $\dot\beta$
are not independent of each other, but connected by the relationship
\begin{equation}\label{alpha+beta-neh}
\frac{d_2}{\dot\beta}-\frac{d_1}{\dot\alpha}=M.
\end{equation}
\end{remark}
 
\begin{remark}
Criteria 2 and 5 involve first-order systems of\ \! $4N$
equations, whereas Criteria 1, 3, and 4 involve first-order systems 
of\ \! $5N$ equations with $N$ constraints, so the dimension of the systems is the same in all cases.
\end{remark}

\subsection{Conservation laws for negative elliptic-hyperbolic rotopulsators}

In addition to Criterion \ref{neh-existence}, we would also like to obtain the conservation laws specific to negative elliptic rotopulsators. These laws follow by straightforward computations using the above proof, the integral of energy, and the six integrals of the total angular momentum.

\begin{proposition}\label{integrals-neh}
If system \eqref{negative} has a solution of the form \eqref{negative-elliptic-hyperbolic}, then the following expressions are constant:

--- energy,
\begin{equation}
\label{energy-neh}
h=\sum_{i=1}^N\frac{m_i\dot r_i^2}{2(1+r_i^2)}+\frac{d_1^2}{2\sum_{j=1}^Nm_jr_j^2}+\frac{d_2^2}{2(M+\sum_{j=1}^Nm_jr_j^2)}+\sum_{1\le i<j\le N}\frac{m_im_jq_{ij}}{(q_{ij}^2-1)^{\frac{1}{2}}};
\end{equation}

--- total angular momentum relative to the $wx$-plane,
\begin{equation}
c_{wx}=d_1,
\end{equation}
where $d_1\ne 0$ is the constant in the expression of $\dot\alpha$ in \eqref{alpha-and-beta-neh}.

--- total angular momentum relative to the $wy$-plane,
\begin{equation}
\label{angmom-wy-neh}
\begin{split}
c_{wy}=-\sum_{i=1}^N\frac{m_i\dot r_i}{(1+r_i^2)^{\frac{1}{2}}}\cos(\alpha+a_i)\sinh(\beta+b_i)\hspace{2.7cm}\\ 
+\sum_{i=1}^Nm_ir_i(1+r_i^2)^{\frac{1}{2}}[\dot\alpha\sin(\alpha+a_i)\sinh(\beta+b_i)+\dot\beta\cos(\alpha+a_i)\cosh(\beta+b_i)];
\end{split}
\end{equation}

--- total angular momentum relative to the $wz$-plane,
\begin{equation}
\label{angmom-wz-neh}
\begin{split}
c_{wz}=-\sum_{i=1}^N\frac{m_i\dot r_i}{(1+r_i^2)^{\frac{1}{2}}}\cos(\alpha+a_i)\cosh(\beta+b_i)\hspace{2.7cm}\\ 
+\sum_{i=1}^Nm_ir_i(1+r_i^2)^{\frac{1}{2}}[\dot\alpha\sin(\alpha+a_i)\cosh(\beta+b_i)+\dot\beta\cos(\alpha+a_i)\sinh(\beta+b_i)];
\end{split}
\end{equation}

--- total angular momentum relative to the $xy$-plane,
\begin{equation}
\label{angmom-xy-neh}
\begin{split}
c_{xy}=-\sum_{i=1}^N\frac{m_i\dot r_i}{(1+r_i^2)^{\frac{1}{2}}}\sin(\alpha+a_i)\sinh(\beta+b_i)\hspace{2.7cm}\\ 
+\sum_{i=1}^Nm_ir_i(1+r_i^2)^{\frac{1}{2}}[\dot\beta\sin(\alpha+a_i)\cosh(\beta+b_i)-\dot\alpha\cos(\alpha+a_i)\sinh(\beta+b_i)];
\end{split}
\end{equation}

--- total angular momentum relative to the $xz$-plane,
\begin{equation}
\label{angmom-xz-neh}
\begin{split}
c_{xz}=-\sum_{i=1}^N\frac{m_i\dot r_i}{(1+r_i^2)^{\frac{1}{2}}}\sin(\alpha+a_i)\cosh(\beta+b_i)\hspace{2.7cm}\\ 
+\sum_{i=1}^Nm_ir_i(1+r_i^2)^{\frac{1}{2}}[\dot\beta\sin(\alpha+a_i)\sinh(\beta+b_i)-\dot\alpha\cos(\alpha+a_i)\cosh(\beta+b_i)].
\end{split}
\end{equation}
\end{proposition}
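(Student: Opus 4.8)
The plan is to substitute the candidate solution \eqref{negative-elliptic-hyperbolic} into the energy integral $T(\mathbf q,\dot{\mathbf q})-U(\mathbf q)=h$ and into each of the six components of the total angular momentum integral of system \eqref{negative}, and then to simplify, using the relations $\rho_i=(1+r_i^2)^{1/2}$, $\dot\rho_i=r_i\dot r_i/(1+r_i^2)^{1/2}$ already recorded in the proof of Criterion \ref{neh-existence}, together with the expressions \eqref{alpha-and-beta-neh} for $\dot\alpha$ and $\dot\beta$. The whole argument is a sequence of elementary reductions; the only structural fact used is that the $\alpha$- and $\beta$-dependence collapses under $\cos^2+\sin^2=1$ and $\cosh^2-\sinh^2=1$ once the explicit forms of $\dot\alpha$ and $\dot\beta$ are inserted.

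First I would differentiate the ansatz to get $\dot w_i=\dot r_i\cos(\alpha+a_i)-r_i\dot\alpha\sin(\alpha+a_i)$, $\dot x_i=\dot r_i\sin(\alpha+a_i)+r_i\dot\alpha\cos(\alpha+a_i)$, $\dot y_i=\dot\rho_i\sinh(\beta+b_i)+\rho_i\dot\beta\cosh(\beta+b_i)$, $\dot z_i=\dot\rho_i\cosh(\beta+b_i)+\rho_i\dot\beta\sinh(\beta+b_i)$. Since the inner product in $\mathbb H^3$ has signature $(+,+,+,-)$, the squared speed collapses to $\dot{\mathbf q}_i\cdot\dot{\mathbf q}_i=\dot w_i^2+\dot x_i^2+\dot y_i^2-\dot z_i^2=\dot r_i^2+r_i^2\dot\alpha^2-\dot\rho_i^2+\rho_i^2\dot\beta^2$, which after inserting $\rho_i$ and $\dot\rho_i$ becomes $\frac{\dot r_i^2}{1+r_i^2}+r_i^2\dot\alpha^2+(1+r_i^2)\dot\beta^2$. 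As $\sigma\,{\mathbf q}_i\cdot{\mathbf q}_i=1$ in $\mathbb H^3$, the kinetic energy is $T=\frac12\sum_i m_i\big[\frac{\dot r_i^2}{1+r_i^2}+r_i^2\dot\alpha^2+(1+r_i^2)\dot\beta^2\big]$; substituting $\dot\alpha=d_1/\sum_j m_jr_j^2$ reduces $\frac12\sum_i m_ir_i^2\dot\alpha^2$ to $\frac{d_1^2}{2\sum_j m_jr_j^2}$, and substituting $\dot\beta=d_2/(M+\sum_j m_jr_j^2)$ reduces $\frac12\sum_i m_i(1+r_i^2)\dot\beta^2$ to $\frac{d_2^2}{2(M+\sum_j m_jr_j^2)}$. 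Adding the potential contribution $-U(\mathbf q)$, with $q_{ij}$ as in Criterion \ref{neh-existence}, gives exactly \eqref{energy-neh}.

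Next I would evaluate each angular momentum component, which has the form $\sum_i m_i(\xi_i\dot\eta_i-\dot\xi_i\eta_i)$ for a pair of coordinates. The diagonal pairs are immediate: $w_i\dot x_i-\dot w_i x_i=r_i^2\dot\alpha$ and $y_i\dot z_i-\dot y_i z_i=-\rho_i^2\dot\beta$, whence $c_{wx}=\dot\alpha\sum_i m_ir_i^2=d_1$ and (the companion identity, not displayed in the statement) $c_{yz}=-\dot\beta\sum_i m_i(1+r_i^2)=-d_2$, both by \eqref{alpha-and-beta-neh}. For each mixed pair ($wy$, $wz$, $xy$, $xz$) the bilinear splits into a term carrying the factor $r_i\dot\rho_i-\dot r_i\rho_i$ and terms carrying $\rho_i^2\dot\beta$ and $r_i\rho_i\dot\alpha$; using $r_i\dot\rho_i-\dot r_i\rho_i=-\dot r_i/(1+r_i^2)^{1/2}$ and $r_i\rho_i=r_i(1+r_i^2)^{1/2}$ and grouping the $\dot\alpha$- and $\dot\beta$-weighted parts reproduces \eqref{angmom-wy-neh}--\eqref{angmom-xz-neh} term by term. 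Constancy of all these quantities then follows because $h$ and the six angular momentum components are first integrals of \eqref{negative}.

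I do not expect a genuine obstacle here; the work is clerical, and the two places that need care are the following. First, the Lorentzian minus sign on the $z$-slot must be tracked consistently: it changes the sign of every $z$-contribution relative to the Euclidean computation and, in particular, is what produces $+(1+r_i^2)\dot\beta^2$ rather than $-(1+r_i^2)\dot\beta^2$ inside $T$. Second, in the four mixed angular momenta one should avoid expanding $\cos(\alpha+a_i)$, $\sinh(\beta+b_i)$, and the like through addition formulas, and instead collect the terms directly, so that the output matches the compact form already written in \eqref{angmom-wy-neh}--\eqref{angmom-xz-neh}.
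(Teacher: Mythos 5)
Your proposal is correct and is exactly the computation the paper intends: the paper's own justification is the one-line remark that the laws "follow by straightforward computations using the above proof, the integral of energy, and the six integrals of the total angular momentum," and your substitution of the ansatz \eqref{negative-elliptic-hyperbolic} into $T-U=h$ and into $\sum_i m_i(\xi_i\dot\eta_i-\dot\xi_i\eta_i)$, simplified via $\rho_i=(1+r_i^2)^{1/2}$, $r_i\dot\rho_i-\dot r_i\rho_i=-\dot r_i/(1+r_i^2)^{1/2}$, and \eqref{alpha-and-beta-neh}, reproduces every displayed formula (including $c_{yz}=-d_2$). The sign bookkeeping for the Lorentzian $z$-slot and the collapse of the $\alpha$-, $\beta$-dependence are handled correctly.
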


--- total angular momentum relative to the $yz$-plane,
\begin{equation}
c_{yz}=-d_2,
\end{equation}
where $d_2\ne 0$ is the constant in the expression of $\dot\beta$ in \eqref{alpha-and-beta-neh}.

\subsection{Negative elliptic-hyperbolic Eulerian rotopulsators for $N=2$}

We further provide a class of negative elliptic-hyperbolic rotopulsators of the curved 2-body problem in $\mathbb H^3$, namely Eulerian orbits, for which the bodies move on a rotating geodesic. These systems rotate relative to the planes $wx$ and $yz$, but have no rotation with respect to the other base planes.

Consider two equal masses, $m_1=m_2=:m>0$, and a candidate solution of the form
\begin{equation}\label{eulerian-2-neh}
\begin{split}
{\bf q}=({\bf q}_1,{\bf q}_2), \ \ {\bf q}_i=(w_i,x_i,y_i,z_i), \ \ i=1,2,\\
w_1=r(t)\cos\alpha(t),\ x_1=r(t)\sin\alpha(t),\hspace{0.5cm}\\
 y_1=\rho(t)\sinh\beta(t),\ z_1=\rho(t)\cosh\beta(t),\hspace{0.3cm}\\
w_2=-r(t)\cos\alpha(t),\ x_2=-r(t)\sin\alpha(t),\hspace{0.15cm}\\
 y_2=\rho(t)\sinh\beta(t),\ z_2=\rho(t)\cosh\beta(t),\hspace{0.3cm}
\end{split}
\end{equation}
with $\alpha$ and $\beta$ nonconstant functions and $r^2-\rho^2=-1$. We can now prove a result which shows that solutions of the form \eqref{eulerian-2-neh} are always relative equlibria, but can never form rotopulsators.

\begin{proposition}
Consider the curved $2$-body problem in $\mathbb H^3$ given by system \eqref{negative} with $N=2$. Then every candidate solution of the form \eqref{eulerian-2-neh} is a negative elliptic-hyperbolic Eulerian rotopulsator, which rotates relative to the planes $wx$ and $yz$, but has
no rotation with respect to the planes $wy, wz, xy$, and $xz$.
\end{proposition}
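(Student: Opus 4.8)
The plan is to follow the same template used for the corresponding $N=2$ positive elliptic-elliptic result, with Criterion~\ref{pee-existence} replaced by Criterion~\ref{neh-existence}. First I would feed the candidate \eqref{eulerian-2-neh} into Criterion~\ref{neh-existence}, reading off the constants $a_1=0$, $a_2=\pi$, $b_1=b_2=0$, $r_1=r_2=:r$, $\rho_1=\rho_2=:\rho=(1+r^2)^{1/2}$. Using the Lorentz inner product and the relation $r^2-\rho^2=-1$, a short computation gives
\[
q_{12}=-r^2-\rho^2=-2r^2-1,
\]
so $q_{12}\le-1$, with $q_{12}=-1$ only in the collision limit $r=0$; hence $q_{12}^2-1=4r^2(1+r^2)$ is nonzero off that locus and the excluded case $q_{ij}=\pm1$ does not arise. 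From \eqref{alpha-and-beta-neh} one reads $\dot\alpha=d_1/(2mr^2)$ and $\dot\beta=d_2/\bigl(2m(1+r^2)\bigr)$, with $d_1,d_2\ne0$ because $\alpha,\beta$ are nonconstant.

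Next I would check that the two rotational equations in \eqref{r-neh-new-new} are identically satisfied for this configuration: since $\sin(a_2-a_1)=\sin\pi=0$ and $\sinh(b_2-b_1)=\sinh 0=0$, their right-hand sides vanish, while their left-hand sides equal $\tfrac1r\tfrac{d}{dt}(r^2\dot\alpha)$ and $\tfrac1\rho\tfrac{d}{dt}(\rho^2\dot\beta)$, which vanish because $r^2\dot\alpha=d_1/2m$ and $\rho^2\dot\beta=d_2/2m$ are constant. Substituting $\dot\alpha$, $\dot\beta$, $q_{12}$, and $(q_{12}^2-1)^{3/2}=8r^3(1+r^2)^{3/2}$ into the expression for $L_1(\mathbf r,s_1)$ and simplifying, system \eqref{r-neh-new-new} collapses to the planar analytic system
\[
\dot r=s,\qquad \dot s=\frac{d_1^2(1+r^2)}{4m^2r^3}-\frac{d_2^2\,r}{4m^2(1+r^2)}+\frac{r s^2}{1+r^2}-\frac{m}{4r^2(1+r^2)^{1/2}} .
\]
Standard existence and uniqueness results for analytic differential equations then give, for every admissible (noncollision) initial condition, a unique analytic solution of the form \eqref{eulerian-2-neh}.

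To separate rotopulsators from relative equilibria I would locate the fixed points of this planar field: they satisfy $s=0$ together with, after clearing denominators and squaring once to remove the single square root, a polynomial equation in $r$; hence for fixed $m>0$, $d_1\ne0$, $d_2\ne0$ there are finitely many, so the relative equilibria (where $r$, and therefore $q_{12}$, is constant) form a negligible set as the parameters vary, while on every other orbit $r$ is nonconstant and the solution is a genuine rotopulsator. Finally I would invoke Proposition~\ref{integrals-neh}: it yields $c_{wx}=d_1\ne0$ and $c_{yz}=-d_2\ne0$, and the antipodal symmetry $(w_2,x_2)=-(w_1,x_1)$ together with $a_2=a_1+\pi$, $b_2=b_1$ makes each body-$2$ summand in \eqref{angmom-wy-neh}, \eqref{angmom-wz-neh}, \eqref{angmom-xy-neh}, \eqref{angmom-xz-neh} the exact negative of the body-$1$ summand, so $c_{wy}=c_{wz}=c_{xy}=c_{xz}=0$, which is the asserted rotation behaviour.

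The step I expect to be the main obstacle is the algebraic reduction of \eqref{r-neh-new-new} to the displayed two-dimensional system: one must carry the two $\cosh$-terms, the factor $(q_{12}^2-1)^{3/2}$, and the two angular-velocity expressions through $L_1$ without sign errors, and then confirm that the collision locus $r=0$ is the only singularity that must be excluded when prescribing initial data.
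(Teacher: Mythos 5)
Your proposal is correct and follows essentially the same route as the paper: apply Criterion~\ref{neh-existence} to obtain $q_{12}=-2r^2-1$, $\dot\alpha=d_1/(2mr^2)$, $\dot\beta=d_2/\bigl(2m(1+r^2)\bigr)$, observe that the two rotational equations in \eqref{r-neh-new-new} are identically satisfied, reduce to a planar analytic system in $(r,s)$, and read the rotation behaviour off Proposition~\ref{integrals-neh}. Two small points in your favour: your attractive term $-m/\bigl(4r^2(1+r^2)^{1/2}\bigr)$ is the correct one (the exponent $(1+r^2)^{2}$ printed in the paper's $L(r,s)$ is a typo), and your explicit exclusion of the finitely many fixed points, which are relative equilibria rather than rotopulsators, is a precision that the paper's phrasing ``every candidate solution'' glosses over.
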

\begin{proof}
Consider a candidate solution of the form \eqref{eulerian-2-neh}. Then the variables relevant to Criterion 5 take the form
$$
q_{12}=-1-2r^2, \ \ \dot\alpha=\frac{d_1}{2mr^2}, \ \ \dot\beta=\frac{d_2}{2m(1+r^2)}, \ \ {\rm with}\ \ d_1,d_2\ \ {\rm constants}.
$$
Then system \eqref{r-neh-new-new} reduces to
\begin{equation}\label{neh-2}
\begin{cases}
\dot r = s\cr
\dot s = L(r,s)\cr
r\ddot\alpha + 2\dot r\dot\alpha=0\cr
\rho\ddot\beta + 2\dot \rho\dot\beta=0,
\end{cases}
\end{equation}
where
$$
L(r,s) = r(1+r^2)\bigg[\frac{d_1^2}{4m^2r^4}-\frac{d_2^2}{4m^2(1+r^2)^2}\bigg]+\frac{rs^2}{1+r^2}-\frac{m}{4r^2(1+r^2)^2}.
$$
But using the above expressions of $\dot\alpha, \dot\beta$ and the fact that $\rho=(1+r^2)^{1/2}, \dot\rho=\frac{r\dot r}{(1+r^2)^{1/2}}$,
we can see that the last two equations in \eqref{neh-2} are identically satisfied, so system \eqref{neh-2} reduces to its first two equations. Since there are no other constraints on $r$, standard results of the theory of differential equations then prove the existence of these Eulerian rotopulsators for $N=2$, thus showing that elliptic-hyperbolic rotopulsators exist in $\mathbb H^3$.

Using Proposition \ref{integrals-neh}, we can conclude that
$c_{wx}=d_1\ne 0, c_{yz}=-d_2\ne 0$, whereas $c_{wy}=c_{wz}=c_{xy}=c_{xz}=0$, which implies that the binary system rotates relative to the planes $wx$ and $yz$, but has no rotation with respect to the other four base planes. This remark completes the proof.
\end{proof}

\subsection{Negative elliptic-hyperbolic Eulerian relative equilibria for $N=3$}

We further provide a class of negative elliptic-hyperbolic relative equilibria of the curved 3-body problem in $\mathbb H^3$, namely Eulerian orbits. These systems rotate relative to the $wx$ and $yz$ planes, but have no rotation with respect to the planes $wy, wz, xy$, and $xz$. 

Consider three equal masses, $m_1=m_2=m_3:=m$, and a candidate solution of the form
\begin{equation}\label{eulerian-neh}
\begin{split}
{\bf q}=({\bf q}_1,{\bf q}_2,{\bf q}_3), \ \ {\bf q}_i=(w_i,x_i,y_i,z_i), \ \ i=1,2,3,\\
w_1=0,\ x_1=0,\ y_1=\sinh\beta(t),\ z_1=\cosh\beta(t),\hspace{0.1cm}\\
w_2=r(t)\cos\alpha(t),\ x_2=r(t)\sin\alpha(t),\hspace{1.2cm}\\
y_2=\rho(t)\sinh\beta(t),\ z_2=\rho(t)\cosh\beta(t),\hspace{1cm}\\
w_3=-r(t)\cos\alpha(t),\ x_3=-r(t)\sin\alpha(t),\hspace{0.8cm}\\
 y_3=\rho(t)\sinh\beta(t),\ z_3=\rho(t)\cosh\beta(t),\hspace{0.95cm}
\end{split}
\end{equation}
with $\alpha$ and $\beta$ nonconstant functions and $r^2-\rho^2=-1$. 
We can now prove a result which shows that solutions of the form \eqref{eulerian-neh} are always relative equlibria, but can never form rotopulsators.

\begin{proposition}
Consider the curved $3$-body problem in $\mathbb H^3$ given by system \eqref{negative} with $N=3$. Then every candidate solution of the form \eqref{eulerian-neh} is a negative elliptic-hyperbolic Eulerian relative equilibrium, which rotates relative to the planes $wx$ and $yz$, but has no rotation with respect to the planes $wy, wz, xy$, and $xz$.
\end{proposition}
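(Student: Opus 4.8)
The strategy is to apply Criterion \ref{neh-existence} to a candidate of the form \eqref{eulerian-neh} and show that the angular equations it produces are rigid enough to force all mutual distances to be constant. First I would exploit the symmetry built into \eqref{eulerian-neh}: body $1$ sits at the centre of the elliptic rotation, so $w_1=x_1=0$, which means $r_1\equiv 0$ and $\rho_1\equiv 1$; bodies $2$ and $3$ lie antipodally on the rotating circle, so in the notation of \eqref{negative-elliptic-hyperbolic} we have $a_2=0$, $a_3=\pi$, $r_2=r_3=r$, $\rho_2=\rho_3=\rho=(1+r^2)^{1/2}$; and all three bodies carry the same hyperbolic angle, so $b_1=b_2=b_3=0$. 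Substituting into the expression for $q_{ij}$ in Criterion \ref{neh-existence} then gives
$$
q_{12}=q_{13}=-(1+r^2)^{1/2}=-\rho, \qquad q_{23}=-1-2r^2,
$$
together with $\dot\alpha=d_1/(2mr^2)$ and $\dot\beta=d_2/\bigl(m(3+2r^2)\bigr)$ from \eqref{alpha-and-beta-neh}; in particular the three mutual distances are functions of $r$ alone (and $r>0$ is needed to avoid collisions), so the whole question reduces to deciding whether $r$ can vary.

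The key step is an observation about the $\beta$-equation in system \eqref{r-neh-new-new}. Since $b_1=b_2=b_3$, its right-hand side vanishes for every index. For $i=1$ it reads $\rho_1\ddot\beta+2\dot\rho_1\dot\beta=0$, and because $\rho_1\equiv 1$, $\dot\rho_1\equiv 0$, this forces $\ddot\beta=0$. Substituting $\ddot\beta=0$ into the $\beta$-equation for $i=2$ gives $2\dot\rho_2\dot\beta=0$; as $\beta$ is nonconstant, $d_2\neq 0$, hence $\dot\beta$ never vanishes, so $\dot\rho_2\equiv 0$ and therefore $\rho$, and with it $r$, is constant. Consequently $q_{12},q_{13},q_{23}$ are all constant, so by Criterion \ref{neh-existence} the candidate cannot be a rotopulsator (two $q_{ij}$ would have to vary) and must be a negative elliptic-hyperbolic relative equilibrium. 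It then remains to check that the other equations of \eqref{r-neh-new-new} are consistent: the $i=1$ radial equation holds because $L_1$ vanishes when $r_1=0$, the two cancelling contributions coming from bodies $2$ and $3$ via $a_3=a_2+\pi$; the $i=2$ and $i=3$ radial equations coincide and, with $\dot r\equiv 0$, reduce to a single algebraic relation $L_2({\bf r},0)=0$ fixing the admissible constant values of $r$ in terms of $m,d_1,d_2$, whose solvability --- argued exactly as in the earlier propositions --- gives existence of these orbits; and the $\alpha$-equations collapse to $r\ddot\alpha=0$, i.e. $\ddot\alpha=0$, in agreement with $\dot\alpha=d_1/(2mr^2)$.

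Finally, the planes of rotation follow from Proposition \ref{integrals-neh}: one has $c_{wx}=d_1\neq 0$ and $c_{yz}=-d_2\neq 0$ directly, while in the formulas \eqref{angmom-wy-neh}--\eqref{angmom-xz-neh} every term contains a factor $\dot r_i$ (zero, since $r$ is constant) or a factor $r_i$, so the body-$1$ terms drop out and the body-$2$ and body-$3$ terms cancel in pairs because $a_3=a_2+\pi$ flips the sign of $\cos(\alpha+a_i)$ and $\sin(\alpha+a_i)$ while leaving the $\beta$- and $r$-dependent factors unchanged; hence $c_{wy}=c_{wz}=c_{xy}=c_{xz}=0$. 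The only genuine labour is the bookkeeping of the substitution into Criterion \ref{neh-existence} and the consistency check of the radial and angular equations once $r$ is frozen; the conceptual heart of the argument is simply that the central body $m_1$ rigidly couples $\ddot\beta$ to $\dot\rho$, thereby killing the pulsating degree of freedom that does survive in the corresponding $N=2$ configuration.
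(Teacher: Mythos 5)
Your proposal is correct and follows essentially the same route as the paper: both apply Criterion \ref{neh-existence}, compute the same $q_{ij}$, $\dot\alpha$, $\dot\beta$, and use the $\beta$-equations (which reduce to $\ddot\beta=0$ and $\dot\rho\dot\beta=0$) to force $\rho$, hence $r$, to be constant, concluding with Proposition \ref{integrals-neh} for the rotation planes. Your derivation merely makes explicit where the equations $\ddot\beta=0$ and $\dot\rho\dot\beta=0$ come from (the $i=1$ and $i=2$ cases respectively), which the paper states without elaboration.
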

\begin{proof}
Consider a candidate solution of the form \eqref{eulerian-neh} of system \eqref{negative}. Then the variables relevant to Criterion \ref{neh-existence} take the form
$$
q_{12}=q_{13}=-(1+r^2)^{1/2},\ \ q_{23}=-2r^2-1,
$$
\begin{equation}\label{alpha-beta-dot-neh}
\dot\alpha=\frac{d_1}{2mr^2},\ \  \dot\beta=\frac{d_2}{m(3+2r^2)}, \ \ \! {\rm with} \ \ \! d_1, d_2\ne 0\ \ \! {\rm constants}.
\end{equation}
Then system \eqref{r-neh-new-new} reduces to
\begin{equation}\label{neh-RE}
\begin{cases}
\dot r = s\cr
\dot s = R(r,s)\cr
r\ddot\alpha+2\dot r\dot\alpha = 0\cr
\ddot\beta = 0\cr
\dot \rho\dot\beta = 0,
\end{cases}
\end{equation}
where
$$
R(r,s)=r(1+r^2)\bigg[\frac{d_1^2}{4m^2r^4}-\frac{d_2^2}{m^2(3+2r^2)^2}\bigg]+\frac{rs^2}{1+r^2}-\frac{m(5+4r^2)}{4r^2(1+r^2)^{1/2}}.
$$
The third equation in \eqref{neh-RE} is identically satisfied, but the last two equations and the expression of $\dot\beta$ in \eqref{alpha-beta-dot-neh} show that $\rho$ is constant (we take it positive, since $\rho=0$ does not correspond to a valid solution), so $r$ is constant as well. Therefore any solution of this type, if it exists, is a relative equilibrium. Notice that $\alpha$ and $\beta$ don't have to be equal, although they are not independent of each other, as relation \eqref{alpha+beta-neh} shows.

Standard results of the theory of differential equations show now that, for admissible initial conditions, negative elliptic hyperbolic relative equilibria of the 3-body problem, given by system \eqref{neh-RE}, do exist.

Using Proposition \ref{integrals-neh}, we can conclude that
$c_{wx}=d_1\ne 0, c_{yz}=-d_2\ne 0$, whereas $c_{wy}=c_{wz}=c_{xy}=c_{xz}=0$, which implies that the system rotates relative to the plane $yz$, but has no rotation with respect to the other base planes. This remark completes the proof.
\end{proof}

\section{Qualitative behaviour of rotopulsators in $\mathbb H^3$}

In this section we will prove a result that describes the qualitative behaviour of rotopulsators in $\mathbb H^3$.
For this purpose, we first introduce a geometric topological object that plays in $\mathbb H^3$ the role the Clifford torus plays in $\mathbb S^3$, and briefly present some of its properties.


\subsection{Hyperbolic cylinders} The 2-dimensional manifold defined by
\begin{equation}
\label{Cyl}
{\bf C}_{r\rho}:=\{(w,x,y,z)\in\mathbb M^{3,1}\ |\ r^2-\rho^2=-1, \ \ 0\le \theta < 2\pi, \ \xi\in\mathbb R\},
\end{equation}
where $w=r\cos\theta, x=r\sin\theta, y=\rho\sinh\xi$, and
$z=\rho\cosh\xi$, with $r,\rho\ge 0$, is called a hyperbolic cylinder, and it has constant positive curvature for $r$ and $\rho$ fixed. But ${\bf C}_{r\rho}$ also lies in $\mathbb H^3$ because the coordinates $w,x,y,z$ satisfy the equation
$$
w^2+x^2+y^2-z^2=-1.
$$
When $r$ (and, consequently, $\rho$) takes all admissible positive real values, the family of hyperbolic cylinders thus defined foliates $\mathbb H^3$.

We have previously shown that negative relative equilibria rotate on hyperbolic cylinders, \cite{Diacu3}, \cite{Diacu4}. We will next prove that, at every moment in time, at least one body of a rotopulsator of $\mathbb H^3$ passes through a continuum of hyperbolic cylinders of any given foliation of $\mathbb H^3$.

\subsection{Geometry and dynamics of rotopulsators in $\mathbb H^3$}

We can now state and prove the following result.
\begin{theorem}
Consider a negative elliptic, negative hyperbolic, or negative elliptic-hyperbolic rotopulsator of the curved $N$-body problem in $\mathbb H^3$. Then, for any foliation $({\bf C}_{r\rho})_{r,\rho>0}$ of $\mathbb H^3$ given by hyperbolic cylinders, it is impossible that the trajectory of each body is contained for all time in some hyperbolic cylinder. In other words, for any such foliation, there is at least one body whose trajectory intersects a continuum of hyperbolic cylinders.
\end{theorem}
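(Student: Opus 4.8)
The plan is to follow the proof of Theorem 1 almost verbatim, with hyperbolic cylinders in the place of Clifford tori and with no appeal to the equations of motion — only to the rigid coordinate form of the solutions. I would argue by contradiction: assume there is a foliation $({\bf C}_{r\rho})_{r,\rho>0}$ of $\mathbb H^3$ for which each body of the given rotopulsator stays, for all time, on one hyperbolic cylinder of the foliation, and then show that the orbit must be a relative equilibrium, contradicting the defining requirement that at least one $q_{ij}$ be nonconstant.

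I would first treat the negative elliptic-hyperbolic case. Writing the body $m_i$ as in \eqref{negative-elliptic-hyperbolic}, one has $w_i^2+x_i^2=r_i^2$ and $z_i^2-y_i^2=\rho_i^2$, with $r_i^2-\rho_i^2=-1$; and a point of $\mathbb H^3$ lies on ${\bf C}_{r\rho}$ precisely when $w^2+x^2=r^2$ and $z^2-y^2=\rho^2$. Hence confinement of $m_i$ to a fixed cylinder ${\bf C}_{r^\sharp\rho^\sharp}$ forces $r_i\equiv r^\sharp$ and $\rho_i\equiv\rho^\sharp$ constant, and likewise $r_j\equiv r^\flat$, $\rho_j\equiv\rho^\flat$ constant if $m_j$ is confined to ${\bf C}_{r^\flat\rho^\flat}$. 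A one-line computation (the $\alpha$'s and $\beta$'s cancel) then gives $q_{ij}=r^\sharp r^\flat\cos(a_i-a_j)-\rho^\sharp\rho^\flat\cosh(b_i-b_j)$, which is constant; as this holds for every pair, all mutual distances are constant, so the orbit is a negative elliptic-hyperbolic relative equilibrium — a contradiction.

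Next I would dispose of the negative elliptic and negative hyperbolic cases via Remark \ref{def-nonstandard}, which identifies them with negative elliptic-hyperbolic rotopulsators having $\beta\equiv0$ and $\alpha\equiv0$, respectively. In both, the body $m_i$ still satisfies $w_i^2+x_i^2=r_i^2$ and $z_i^2-y_i^2=\rho_i^2$, so confinement to a cylinder again forces $r_i$ and $\rho_i$ constant; and since neither $\alpha$ nor $\beta$ occurs in the displayed expression for $q_{ij}$, the same conclusion applies, so these orbits are relative equilibria too. Finally, since a foliation of $\mathbb H^3$ by hyperbolic cylinders is a continuum of surfaces and the bodies cannot all be confined, the ``cylinder coordinate'' $\sqrt{w_k^2+x_k^2}$ of some body $m_k$ is a nonconstant continuous function of $t$; its range is a nondegenerate interval, so that body's trajectory meets a continuum of hyperbolic cylinders of the foliation, which is the last assertion.

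The only point requiring care — and the closest thing to an obstacle — is the bookkeeping of the first step: verifying that ``confined to a hyperbolic cylinder'' is genuinely equivalent to $r_i$ (equivalently $\rho_i$) being constant, which hinges on the fact that the quantities $w_i^2+x_i^2$ and $z_i^2-y_i^2$ already pick out the unique leaf through the body, and then checking that $q_{ij}$ depends only on those two constants and the fixed phases $a_i,b_i$, so that once every body is pinned to a leaf all mutual distances freeze at once. There is no analytic difficulty, since, in contrast to the existence results of the earlier sections, this argument never touches system \eqref{negative}.
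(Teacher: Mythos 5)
Your proposal is correct and follows essentially the same route as the paper's own proof: confinement to a leaf forces $r_i,\rho_i$ constant, hence every $q_{ij}$ is constant and the orbit is a relative equilibrium, with the negative elliptic and negative hyperbolic cases reduced to the elliptic-hyperbolic one via Remark \ref{def-nonstandard}. Your closing intermediate-value argument for the ``continuum of cylinders'' claim is slightly more explicit than the paper's one-line remark, but the substance is identical.
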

\begin{proof}
Let us assume that there exists a foliation $({{\bf C}_{r\rho}})_{r,\rho\ge 0}$ of $\mathbb H^3$ with hyperbolic cylinders for which a solution of the form \eqref{negative-elliptic}, \eqref{negative-hyperbolic}, or \eqref{negative-elliptic-hyperbolic} behaves such that the trajectory of each body is confined to a hyperbolic cylinder. We will prove that such a solution must be a relative equilibrium.

Indeed, if the body $m_i$, whose solution is described by 
$$
w_i=r_i\cos(\alpha+a_i), x_i=r_i\sin(\alpha+a_i), y_i=\rho_i\sinh(\beta+b_i), z_i=\rho_i\cosh(\beta+b_i),
$$
with $r_i^2-\rho_i^2=-1$, is confined to the hyperbolic cylinder ${\bf C}_{r^\sharp\rho^\sharp}$, with $r^\sharp,\rho^\sharp$ constant, then
$r_i=r^\sharp$ and $\rho_i=\rho^\sharp$ are also constant. Similarly,  if the body $m_j$, whose solution is described by 
$$
w_j=r_i\cos(\alpha+a_j), x_j=r_i\sin(\alpha+a_j), y_i=\rho_j\sinh(\beta+b_j), z_j=\rho_j\cosh(\beta+b_j),
$$
with $r_j^2-\rho_j^2=-1$, is confined to the hyperbolic cylinder ${\bf C}_{r^\flat\rho^\flat}$, with $r^\flat,\rho^\flat$ constant, then
$r_i=r^\flat$ and $\rho_i=\rho^\flat$ are also constant. As a result, 
$$
q_{ij}=r^\sharp r^\flat\cos(a_i-a_j)-\rho^\sharp\rho^\flat\cosh(b_i-b_j),
$$ 
which is constant. So all the mutual distances are constant, therefore the solution is a relative equilibrium. 

For negative elliptic rotopulsators, we can use Remark \ref{def-nonstandard}, and notice that they are negative elliptic-hyperbolic rotopulsators with $\beta\equiv 0$. But 
since $\beta$ does not occur anyway in the above expression of $q_{ij}$, it won't show up for $\beta\equiv 0$ either, so the mutual distances of such orbits are also constant. For negative hyperbolic rotopulsators we can draw the same conclusion by using the fact that they are elliptic-hyperbolic rotopulsators with $\alpha\equiv 0$.

Since the foliation of $\mathbb H^3$ with hyperbolic cylinders forms a continuum of surfaces, the last part of the theorem follows. This remark completes the proof.
\end{proof}

\section{Final remarks}

Criteria 1, 2, 3, 4, and 5 open the possibility to study large classes of rotopulsators for various values of $N\ge 2$. 
In this paper we put into the evidence classes of positive elliptic, positive elliptic-elliptic, negative elliptic, and negative hyperbolic rotopulsators and gave some examples for $N=2$ and $N=3$. Moreover, the above criteria allow us to find classes of relative equilibria that are more difficult to find using the results obtained in \cite{Diacu3} and \cite{Diacu4}.

Since the classes of solution we found in this preliminary paper are restricted to $N=2$, $N=3$ and to configurations having many symmetries, and consequently equal masses, it would be interesting to know whether there exist rotopulsators given for $N>3$ bodies, less symmetric configurations, as well as nonequal masses.  

We derived criteria for positive elliptic, positive elliptic-elliptic, and negative elliptic Lagrangian rotopulsators and for the negative hyperbolic and negative elliptic-hyperbolic Eulerian rotopulsators in the 3-dimensional case, but used them mainly to obtain existence and uniqueness results, although, in some cases, we hinted at the qualitative behaviour of the classes of orbits we found. It would be therefore interesting to study these equations in detail and provide a complete classification of the motions that occur, as it has been done in \cite{Diacu5} for the 2-dimensional case of the curved 3-body problem. Since the equations for these orbits in $\mathbb S^3$ and $\mathbb H^3$ are more complicated than the ones in $\mathbb S^2$ and $\mathbb H^2$, each system would require an extensive study. Finally, the stability of these orbits, using for instance the tools developed in \cite{Martinez} and \cite{Diacu9}, is another topic that merits close attention and points at further directions in which the curved $N$-body problem can be successfully developed.

\bigskip

\noindent{\bf Acknowledgements.} Florin Diacu is indebted to Carles Sim\'o, James Montaldi, and Sergiu Popa for some enlightening discussions. The authors also acknowledge the partial support provided by a Discovery Grant from NSERC of Canada (Florin Diacu) and a University of Victoria Fellowship (Shima Kordlou).

\end{document}